\definecolor{ProcessBlue}{cmyk}{1,0,0,0.40}
\newtheorem{theorem}{Theorem}
\newtheorem{corollary}[theorem]{Corollary}
\newtheorem{definition}{Definition}
\newtheorem{lemma}{Lemma}
\newtheorem{proposition}{Proposition}
\newtheorem{remark}[theorem]{Remark}
\def\ds{\displaystyle}
\newcommand{\be}{\begin{equation}}
\newcommand{\ee}{\end{equation}}
\def\rg{\rangle}
\def\lg{\langle}
\newcommand{\calM}{{\cal M}}
\newcommand{\ep}{\varepsilon}
\newcommand{\dN}{{{\bf N}}}
\newcommand{\dR}{{{\bf R}}}
\newcommand{\E}{{{\bf E}}}
\newcommand{\prob}{{{\bf P}}}
\newcommand{\Prob}{{{\bf P}}}
\newcommand{\calF}{\mathcal{F}}
\newcommand{\calH}{\mathcal{H}}
\newcommand{\calS}{\mathcal{S}}
\newcounter{figurecounter}
\title{ Markov games with frequent actions and incomplete information}
\author{P. Cardaliaguet\thanks{Ceremade, Universit\'e Paris-Dauphine,
Place du Mar\'echal de Lattre de Tassigny, 75775 Paris cedex 16 (France). cardaliaguet@ceremade.dauphine.fr. }
 \and Catherine Rainer\thanks{Universit\'e de Bretagne Occidentale, 6, avenue Victor-le-Gorgeu, B.P. 809, 29285 Brest cedex, France.
e-mail: Catherine.Rainer@univ-brest.fr} \and Dinah Rosenberg\thanks{D\'epartement ESD, HEC Paris. rosenberg@hec.fr}
  \and Nicolas Vieille\thanks{D\'epartement ESD, HEC Paris. vieille@hec.fr}}
\begin{document}

\maketitle

\begin{abstract}
We study a two-player, zero-sum,  stochastic game with incomplete information on one side in which the players are allowed to play more and more frequently. The informed player observes the realization of a Markov chain on which the payoffs depend, while the non-informed player only observes his opponent's actions. We show the existence of a limit value as the time span between two consecutive stages vanishes;  this value is characterized through an auxiliary optimization problem and as the solution of an Hamilton-Jacobi equation.
\end{abstract}

%\tableofcontents
\vspace{3mm}

\noindent{\bf Key-words:} Markov games, incomplete information, zero-sum games, Hamilton-Jacobi equations, repeated games. 
\vspace{3mm}

\noindent{\bf A.M.S. classification :} 91A05, 91A15, 60J10
\vspace{3mm}

\bigskip

\section{Introduction}

This paper contributes to the expanding literature on dynamic games with asymmetric information, in which information parameters change with time, see e.g. Athey and Bagwell (2008), Mailath and Samuelson (2001), Phelan (2006), Renault (2006), Wiseman (2008), Neyman (2008) and more recently Escobar and Toikka (2013) and Renault, Solan and Vieille (2012). In these papers, payoff-relevant types are private information,  and follow  Markov processes.

The mathematical analysis of such games for a fixed discount factor $\delta$ remains beyond reach, and all of the above papers (as well as most of the literature on repeated games, see Mailath and Samuelson (2006)) focus on the limiting case where $\delta \to 1$, with the interpretation that players are "very patient". Yet, another, equally interesting interpretation which is consistent with $\delta \to 1$ is that the players get the opportunity to play very frequently. That the two interpretations may lead to sharpingly contrasted results was first pointed in Abreu, Milgrom and Pearce (1991) for repeated games with imperfect monitoring, see also Fudenberg and Levine (2007) for a recent elaboration on this issue. Recently, this point was also convincingly made in Peski and Wiseman (2012), which analyzes stochastic games with frequent actions, and which contrasts results with those proven in H\"orner, Sugaya, Takahashi and Vieille (2011)  for stochastic games with patient players.

We here adhere to this alternative interpretation. Our goal is to clear the mathematical problems in the analysis of the benchmark case of two-player, zero-sum games, and we revisit the model of Renault (2006) as follows. The interaction between two players is affected by a payoff-relevant type, which evolves in continuous time according to a Markov process $(s_t)_{t\geq 0}$. The two players choose actions at discrete time instants (called stages). Together with the current type, these actions determine the (unobserved) payoff of player 2 to player 1. The realizations of $(s_t)$ are observed by player 1, but not by player 2, who only observes past action choices of player 1. Players discount future payoffs at a fixed and common discount rate $r>0$, and the time span between two consecutive stages is $\frac{1}{n}$.

We prove the existence and provide a characterization of the limit value, as the time span $\frac{1}{n}$ goes to zero. While our setup is directly inspired from Renault (2006), our analysis is significantly different. In Renault (2006), the transition rates between any two consecutive stages remain constant when players get more patient. (At least when the process of types is irreducible,) the initial private information of a player has a finite lifetime,  and the limit value does not depend on the initial distribution. Here instead, transitions rates are of the order of $\frac{1}{n}$: as players play more often, the probability that the state changes from one stage to the next vanishes. As a result, the limit value does depend on the initial distribution.

We first analyze the case of exogenous transitions (transition rates do not depend on action choices). Adapting techniques from the literature on repeated games with incomplete information, see Aumann and Maschler (1995), we give a semi-explicit formula of the limit value as the value of an auxiliary optimization problem, which we use to get explicit formulas in a number of cases.
Using PDE techniques, we provide an alternative characterization of the limit value as the unique solution (in a weak sense) to a non-standard Hamilton-Jacobi (HJ) equation. This equation can be understood as the infinitesimal counterpart of a dynamic programming principle.

We next expand significantly this framework to allow first for endogenous transitions (transition rates do depend on actions) and next, for incomplete information on both sides (each player observes and controls his own Markov chain). In both settings we show that the limit value exists and is characterized as the unique (weak) solution of a HJ equation. Our techniques for this analysis  (viscosity solutions of Hamilton-Jacobi equations, passage to the limit in these equations) are reminiscent of the ones developed for  differential games, as in Evans and Souganidis (1984). However, because of the information asymmetry, the Hamilton-Jacobi equation satisfied by the limit value takes the form of an obstacle problem, much as  in the case of  differential games with incomplete information (Cardaliaguet and Rainer (2009a), Cardaliaguet (2009)), yet with a significant difference. Indeed,  information is here disclosed to the informed player(s) through time (and not only at the initial instant); this leads to a new HJ equation and to a  slightly different definition of weak solution (cf. the discussion after Definition  \ref{defvisco} and at the beginning of  the proof of the comparison principle). The passage from discrete games to continuous equations partially relies on methods developed for repeated games in Vieille (1992), Laraki (2002), Cardaliaguet, Laraki and Sorin (2012), and the starting point of our analysis for incomplete information on both sides is inspired by Gensbittel and Renault (2012).

The paper is organized as follows. We first present the model and state the main results (Section \ref{sec_res}), which we illustrate through several examples in Section \ref{sec:exemple} and which we prove in Section \ref{sec:proofmain}.
Games with  endogenous  transitions are analyzed in Section \ref{sec:nonendo}, while Section \ref{sec:bothsides} is devoted to games with incomplete information on both sides. In the appendix we collect the proofs of several technical facts, including a new comparison principle adapted to our framework.

\section{Model and main result}\label{sec_res}

\subsection{Model}

We start with the simpler version of the model. There is a finite set of states $S$ of  cardinal $|S|$. With each state is associated a zero-sum game with finite action sets $A$ and $B$ and payoff function $g(s,\cdot,\cdot)$, where $g:S\times A\times B\to \dR$. Time is continuous, and the state $s_t$ at time $t\geq 0$ follows a Markov chain with law $\prob$, initial distribution $p\in \Delta(S)$ and generator $R=(\rho_{ss'})_{s,s'\in S}$. For $s\neq s'$, $\rho_{ss'}$ is thus the rate of transitions from $s$ to $s'$, while $-\displaystyle \rho_{ss}=\sum_{s'\neq s}\rho_{ss'}$ is the rate of transitions out of state $s$. We denote by $P(\cdot)$ the transition semi-group of $(s_t)_{t\geq 0}$, so that $P_{h}(s,s')=\prob(s_{t+h}=s'\mid s_t=s)$ for all $t,h\geq 0$ and $s,s'\in S$. The map $t\mapsto P_t$ is a solution to the Kolmogorov equation $P'_t=RP_t$, and is given by $P_t=\exp(tR)$.
\bigskip

Given  $n\in \dN^*$, we let $G_n(p)$ denote the following, two-player game with infinitely many stages. In each stage $k\in \dN$, players choose actions $a_k$ and $b_k$ in $A$ and $B$, and the payoff is given by $g(s_k^{(n)},a_k,b_k)$ where $s_k^{(n)}:=s_{k/n}$ is the state at time $\frac{k}{n}$. Along the play, player 1 observes past and current realizations of the states $s_k^{(n)}$ and both players observe past actions of player 1, but payoffs are not observed.\footnote{Whether or not actions of player 2 are observed is irrelevant.}

We view $G_n(p)$ as the discretized version of a continuous-time game, with stage $k$ of $G_n(p)$ taking place at physical time $\frac{k}{n}$. As $n$ increases, the time span between two consecutive stages shrinks and the players get the option to play more and more frequently. In physical time, players discount future payoffs at the fixed, positive rate $r>0$. Hence, the weight of stage $k$ in $G_n(p)$ is $\displaystyle \int_{k/n}^{(k+1)/n}re^{-rt}dt=\lambda_n(1-\lambda_n)^k$, where $\lambda_n:=\displaystyle 1-e^{-r/n}$. Note that $\lambda_n\to 0$ as $n\to +\infty$ (and $1-\lambda_n$ may be interpreted as the discount factor between two consecutive stages in $G_n(p)$).

We denote by $\tilde v_n(p)$ the value of the game $G_n(p)$.\footnote{We abstain from using the notation $v_n(p)$, which is associated with games  with $n$ stages.} From the perspective of the literature on repeated games, the game $G_n(p)$ is thus a discounted game, with discount factor $1-\lambda_n$.

%\bigskip
%
%In the second part of the paper, we analyze a significantly more general setup, in allowing the transition rates to depend on actions, $\rho_{ss'}(a,b)$. Given $x\in \Delta(A)$ and $y\in \Delta(B)$, we denote by $R(x,y):=\sum_{a,b}x(a)y(b)R(a,b)$ the mixed extension of the transition rates.
%
%// \textbf{a completer} //

\subsection{Results}

Our main result is the existence of $\lim_{n\to +\infty}\tilde v_n(p)$, together with different characterizations of the limit. We need a few definitions.

Define $\calS(p)$ to be the set of adapted, c\`adl\`ag processes $(p_t)_{t\geq 0}$, defined on some filtred probability space $(\Omega,\calF,\prob,(\calF_t)_{t\geq 0})$, with values in $\Delta(S)$,  and such that, for each $t,h\geq 0$, one has
\be\label{Sp}\E[p_{t+h}\mid \calF_t]=^T\!P(h)p_t, \ \prob-\mbox{a.s.}\ee

Given $\tilde p\in \Delta(S)$, we denote by $u(\tilde p)$ the value of the one-shot, zero-sum game $\Gamma(\tilde p)$ with action sets $A$ and $B$ and payoff function
\[g(\tilde p,a,b):=\sum_{s\in S}\tilde p(s)g(s,a,b).\]
That is, $\ds u(\tilde p)=\max_{x\in \Delta(A)}\min_{y\in \Delta(B)}g(\tilde p,x,y)=\min_{y\in \Delta(B)}\max_{x\in \Delta(A)}g(\tilde p,x,y)$.

\begin{theorem}\label{th1}
The sequence $(\tilde v_n(\cdot))_{n\in \dN}$ converges uniformly, and \textbf{P1} and \textbf{P2} hold, with
$v(p)=\lim_{n\to \infty}\tilde v_n(p) $.
\begin{description}
\item[P1]
$\displaystyle v(p)=\max_{(p_t)\in \calS(p)} \E\left[ \int_0^{+\infty} re^{-rt}u(p_t)dt\right].$
\item[P2] $v(\cdot)$ is the unique viscosity solution of the equation
\be\label{HJ1}
\min\left\{ rv(p)+H(p,Dv(p))\ ; \ -\lambda_{\max}(p, D^2v(p))\right\}=0 \qquad {\rm in }\; \Delta(S),
\ee
where $H(p,\xi)=-\langle ^T\!Rp,\xi\rangle -ru(p)$.
\end{description}
\end{theorem}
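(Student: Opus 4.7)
My plan is to establish the convergence and characterization P1 simultaneously, by matching $\liminf_n \tilde v_n(p)$ and $\limsup_n \tilde v_n(p)$ with the supremum on the right-hand side of P1, and then to deduce P2 by showing that the limit value $v$ is a viscosity solution of the Hamilton--Jacobi equation \eqref{HJ1} and invoking a comparison principle (stated, per the authors, in the appendix). Two preliminary observations guide the argument: first, each $\tilde v_n$ is bounded by $\|g\|_\infty$ and equicontinuous in $p$ (standard Lipschitz estimates for discounted games with finite actions), so subsequential limits exist and are continuous; second, by the usual duality between the informed player's revelation strategies and $\Delta(S)$-valued martingales, player 2's conditional law of the current state lies exactly in the class $\calS(p)$ defined by \eqref{Sp}.

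\textbf{Lower bound: informed player reproduces any admissible process.} Fix $(p_t)\in\calS(p)$ and set $q_k:=p_{k/n}$. Condition \eqref{Sp} rewrites as $\E[q_{k+1}\mid\calF_{k/n}]={}^T\!P(1/n)\, q_k$, which decomposes the one-step update into a deterministic Markov drift and a centered perturbation. The Aumann--Maschler splitting lemma allows player 1, at stage $k$, to play a type-dependent mixed action whose realization (observed by player 2) induces exactly the required centered perturbation in player 2's posterior; together with the deterministic Markov drift of $(s_t)$ this reproduces $(q_k)$ as player 2's posterior. Player 1 then plays an optimal strategy of the one-shot game $\Gamma(q_k)$ on top of this signalling, ensuring a stage-$k$ expected payoff at least $u(q_k)$. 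A Riemann-sum argument then yields
\[
\sum_{k\geq 0}\lambda_n(1-\lambda_n)^k\,\E[u(q_k)]\;\longrightarrow\;\E\!\left[\int_0^\infty re^{-rt}u(p_t)\,dt\right],
\]
so $\liminf_n \tilde v_n(p)\geq \E[\int_0^\infty re^{-rt}u(p_t)\,dt]$ and, taking the supremum over $\calS(p)$, the desired lower bound.

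\textbf{Upper bound and conclusion of P1.} Conversely, fix any strategy $\sigma_1$ of player 1 in $G_n(p)$ and let $\bar p_k^{(n)}$ be player 2's conditional law of $s_{k/n}$ given his observation history $\calH_k$. The Markov property of $(s_t)$ and the tower property of conditional expectation give $\E[\bar p_{k+1}^{(n)}\mid\calH_k]={}^T\!P(1/n)\,\bar p_k^{(n)}$, so a piecewise-constant c\`adl\`ag extension of $(\bar p_k^{(n)})$ belongs to $\calS(p)$. Player 2 can at each stage play optimally in $\Gamma(\bar p_k^{(n)})$, forcing the expected stage-$k$ payoff to be at most $u(\bar p_k^{(n)})$; summing,
\[
\tilde v_n(p)\leq \sum_{k\geq 0}\lambda_n(1-\lambda_n)^k\,\E[u(\bar p_k^{(n)})].
\]
A tightness argument on the induced laws of the processes (which live in the compact space $\Delta(S)$) together with Skorokhod-type representations lets one pass to the limit and realize the right-hand side as $\E[\int_0^\infty re^{-rt}u(p_t)\,dt]$ for some $(p_t)\in\calS(p)$. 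Combined with the lower bound, this proves P1, the uniform convergence $\tilde v_n\to v$, and the existence of a maximizer.

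\textbf{From P1 to P2 and the main obstacle.} Concavity of $v$ in $p$ follows from P1 because $\calS$ behaves convexly under mixtures of probability spaces; this immediately yields the supersolution inequality $-\lambda_{\max}(p,D^2v(p))\geq 0$ (restricted to directions tangent to $\Delta(S)$). For the Hamilton--Jacobi branch I would use the dynamic programming principle
\[
\tilde v_n(p)=\mathrm{val}_{x,y}\bigl[\lambda_n g(p,x,y)+(1-\lambda_n)\,\E_{x}[\tilde v_n(\bar p_1^{(n)})]\bigr],
\]
Taylor-expand a smooth test function $\phi$ at $p$ using $P(1/n)=I+R/n+o(1/n)$, and identify the leading orders: the drift produces $\langle{}^T\!Rp,D\phi\rangle$, the zero-order term yields $-ru(p)$ once the one-shot value appears via the signalling alternative, and the residual curvature term is handled by the concavity alternative encoded in $-\lambda_{\max}$. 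This produces, in the limit $n\to\infty$, the sub- and supersolution inequalities for \eqref{HJ1}. The main technical obstacle is the comparison principle: because the Hamiltonian is non-coercive in $\xi$, the obstacle depends on $D^2v$ only through its restriction to the tangent space of $\Delta(S)$, and information is progressively revealed (not only at $t=0$), a standard comparison result does not apply, and one must use the dedicated comparison principle proven in the appendix. Uniqueness then pins down $v$ as the viscosity solution of \eqref{HJ1}, completing P2.
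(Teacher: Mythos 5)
Your overall architecture matches the paper's: lower bound by letting the informed player reproduce an arbitrary process of $\calS(p)$, upper bound via the uninformed player's posterior process, a Meyer--Zheng-type tightness argument for attainment of the supremum, and \textbf{P2} via the dynamic programming principle together with the comparison principle of the appendix. There is, however, a genuine gap in your upper bound. The inequality you assert --- that if player 2 plays an optimal strategy of $\Gamma(\bar p_k^{(n)})$ then the expected stage-$k$ payoff is at most $u(\bar p_k^{(n)})$ --- is false: conditionally on player 2's history $\calH_k$, player 1's action $a_k$ is correlated with the state $s_{k/n}$ (this correlation is precisely the channel through which information is revealed), so the conditional expected payoff equals $\sum_a \Prob(a_k=a\mid\calH_k)\,g(\tilde p_k^{\,a},a,b_k)$ with $\tilde p_k^{\,a}$ the posterior after observing $a$, and not $g(\bar p_k^{(n)},x_k,b_k)$; already at stage $0$ the informed player can guarantee strictly more than $u(p)$ against any fixed $y$. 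The correct estimate (Lemmas 2.5--2.6 in Mertens, Sorin and Zamir, used in the paper's Step 2) is
\[
\E\bigl[g(s_{k/n},a_k,b_k)\mid \calH_k\bigr]\;\le\; u(p_k)\;+\;\E\bigl[\,|p_k-\tilde p_k|_1\mid \calH_k\bigr],
\]
and the crux of the upper bound is to show that the discounted sum of the error terms is $O\bigl((\lambda_n+1/n)^{1/2}\bigr)$, via Cauchy--Schwarz and the telescoping $L^2$ bound on the increments of the posterior process. Without this term and its control, your displayed inequality $\tilde v_n(p)\le\sum_k\lambda_n(1-\lambda_n)^k\E[u(\bar p_k^{(n)})]$ does not hold and the upper bound is not established. (Once the corrected inequality is in place, note also that no tightness or Skorokhod representation is needed at this stage: the interpolation $\bar p_t={}^T\!P_{t-k/n}\,\bar p_{k/n}$ already belongs to $\calS(p)$, so the sum is dominated by the supremum in \textbf{P1} up to $O(1/n)$; the paper reserves the Meyer--Zheng argument for proving that the supremum is attained.)

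A second, smaller point concerns the lower bound: you invoke the static splitting lemma stage by stage, but because the state moves between stages and is already correlated with the past announcements, one must verify that a signalling rule exists which simultaneously reproduces the prescribed law of $(q_k)$ and keeps $q_k$ equal to the conditional law of $s_{k/n}$ given $q^k$ at every stage. This consistency statement is exactly the content of the paper's Lemma \ref{lemm2}, whose inductive proof occupies a section of the appendix; your sketch assumes it implicitly. The idea of your step is the right one, but this dynamic version of the splitting lemma is the nontrivial ingredient and should be stated and proved.
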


\bigskip

Few comments are in order.
We first comment on  \textbf{P2}, and on the intuitive content of equation (\ref{HJ1}). Assuming $v(\cdot)$ (extended to a neighborhood of $\Delta(S)$) is a smooth function, $Dv(p)$ and $D^2v(p)$ stand respectively for the gradient and Hessian matrix of $v(\cdot)$ at $p$, while, loosely speaking, $\lambda_{\max}(p, D^2v(p))$ is the maximal eigenvalue of the restriction of $D^2v(p)$ to the tangent space of $\Delta(S)$ (all formal definitions will be provided later).
According to  \eqref{HJ1},
\begin{itemize}
\item[(i)] $-\lambda_{\max}(p, D^2v(p))\geq 0$, so that the limit value $v$ is concave.
This concavity property---which actually holds for each $\tilde v_n$ (thanks to the so-called splitting results, such as Propositions 2.2 and 2.3 in Sorin (2002))---can also be established using \textbf{P1}.
 \item[(ii)] the inequality $rv(p)+H(p,Dv(p))\geq 0$ always holds on $\Delta(S)$, with equality at any point $p$  where $v$ is strictly concave (or, more precisely, at which $-\lambda_{\max}(p, D^2v(p))> 0$).

It turns out that the Hamilton-Jacobi equation
\[ rw(p)+H(p,Dw(p))=0, \; p\in\Delta(S)\]
characterizes the limit value of the auxiliary game in which no  player observes $(s_t)$ -- the PDE  actually being the infinitesimal version of the dynamic programming equation.
%, the value of the game would be given by
% \[ w(p)=\int_0^\infty e^{-rt}u(p^*_t)dt,\]
% where $p^*_t=    ^T\!P(t)p$. If  $w$ is smooth, it is easy to check that it satisfies  the relation
%\[ rw(p)+H(p,Dw(p))=0, \; p\in\Delta(S).\]
In our game, the equality $rv(p)+H(p,Dv(p))=0$ must intuitively therefore hold wherever it is optimal for
player~1 not to disclose  information. For this reason, the set
\be\label{defsetH}
\calH:=\{p\in \Delta(S)\mid rv(p)+H(p,Dv(p))=0\}
\ee is called the \textit{non-revealing set}.
\end{itemize}

 In general however, one cannot hope  the limit value $v(\cdot)$ to be  smooth. For this reason, the interpretation of the equation \eqref{HJ1} is in the viscosity  sense, see Definition \ref{defvisco} in  Section \ref{sec:nonendo}.

 \bigskip

To illustrate \textbf{P1}, let us specialize  Theorem \ref{th1} to the case where $R$ is identically 0. In such a case, the state $s_0$ is drawn at time 0 and remains fixed throughout time. The game $G_n(p)$ thus reduces to a truly repeated game with incomplete information \`a la Aumann and Maschler (1995). Note that $\calS(p)$ is then equal to the set of c\`adl\`ag martingales with values in $\Delta(S)$ and initial value $p$.

Consider two $\Delta(S)$-valued processes $(p_t)_{t\geq 0}$ and $(\tilde p_\tau)_{\tau\in [0,1]}$, such that
$\tilde p_\tau=p_{-\frac{\ln(1-\tau)}{r}}$ or equivalently $p_t=\tilde p_{1-e^{-rt}}$ a.s., for each $t\geq 0$ and  $\tau\in [0,1]$. Observe that $(p_t)$ is a martingale iff $(\tilde p_\tau)_{\tau\in [0,1]}$ is a martingale, and
\[\E\left[ \int_0^\infty re^{-rt}u(p_t)dt\right]=\E\left[\int_0^1u(\tilde p_\tau)d\tau \right].\]
Therefore, denoting by $\calM_{[0,1]}(p)$ the set of c\`adl\`ag martingales defined over $[0,1]$, with values in $\Delta(S)$ and  starting from $p$, one has
\begin{equation}\label{splitting}
v(p)=\max_{(\tilde p_\tau)\in \calM_{[0,1]}(p)}\E\left[ \int_0^1u(\tilde p_\tau)d\tau \right],
\end{equation}
a well-known formula for repeated games (see Section 3.7.2 in Sorin (2002)). In a sense, the assertion \textbf{P1} thus provides the appropriate generalization of (\ref{splitting}) to the case of an arbitrary  transition rate matrix $R$.\\
\bigskip

\textbf{P1} and \textbf{P2} provide two alternative (and independent) characterizations of the limit value, as the value of an auxiliary optimization problem, and as a solution to a Hamilton-Jacobi PDE. We next state a verification theorem, which relies on \textbf{P2} to give a sufficient condition under which a process in $\calS(p)$ is optimal in \textbf{P1}.

\begin{theorem}\label{thm:sufficientcond} Assume that  $v$ is of class $C^2$ in a neighborhood of $\Delta(S)$. Let $p\in \Delta(S)$ and $(p_t)\in \calS(p)$ be given, and assume that (i) and (ii) below hold.
\begin{itemize}
\item[(i)]  $\prob$-a.s., one has $p_s\in{\mathcal H}$ and $v(p_s)-v(p_{s-})=\langle Dv(p_{s-}),p_s-p_{s-}\rangle$ for all $s\geq 0$,

\item[(ii)]  $(p_t)$ has no continuous martingale part.
\end{itemize}
Then $(p_t)$ achieves the maximum in \textbf{P1}.
\end{theorem}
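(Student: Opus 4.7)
The plan is to apply It\^o's formula to the discounted value process $e^{-rt}v(p_t)$ and exploit both assumptions, together with characterization \textbf{P2}, to collapse the formula to the expression in \textbf{P1}. First, I note that the semigroup identity \eqref{Sp} forces a Doob--Meyer decomposition $p_t=p_0+\int_0^t{}^T\!R\,p_{s-}\,ds+M_t$: a direct computation (using $\int_0^h {}^T\!R\,{}^T\!P(u)\,du={}^T\!P(h)-I$) shows that the right-hand side minus $p_0$ has zero conditional increments, so $M$ is a martingale; by (ii), $M$ is purely discontinuous. Since $v$ is $C^2$ in a neighborhood of the compact set $\Delta(S)$, It\^o's formula applies.

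It gives
\begin{align*}
e^{-rt}v(p_t)=v(p_0)&+\int_0^t e^{-rs}\bigl[-rv(p_{s-})+\langle Dv(p_{s-}),{}^T\!R\,p_{s-}\rangle\bigr]\,ds\\
&+\int_0^t e^{-rs}\langle Dv(p_{s-}),dM_s\rangle\\
&+\sum_{s\le t}e^{-rs}\bigl[v(p_s)-v(p_{s-})-\langle Dv(p_{s-}),p_s-p_{s-}\rangle\bigr].
\end{align*}
The jump sum is identically zero by the second half of (i). The set $\calH$ is closed because $p\mapsto rv(p)+H(p,Dv(p))$ is continuous under the $C^2$ assumption, so the hypothesis $p_s\in\calH$ for all $s$ passes to the left-limits: $p_{s-}\in\calH$ as well. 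By the definition of $\calH$ and $H$, this means $-rv(p_{s-})+\langle Dv(p_{s-}),{}^T\!R\,p_{s-}\rangle=-ru(p_{s-})$, so the Lebesgue integrand above simplifies to $-ru(p_{s-})$.

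Therefore $e^{-rt}v(p_t)=v(p_0)-\int_0^t r e^{-rs}u(p_{s-})\,ds+N_t$ with $N_t:=\int_0^t e^{-rs}\langle Dv(p_{s-}),dM_s\rangle$ a local martingale. Since $v$, $Dv$, $u$ and $p_t$ are all bounded on the compact $\Delta(S)$, a standard localization argument upgrades $N$ to a true martingale on every $[0,T]$, so taking expectations yields $\E[e^{-rt}v(p_t)]=v(p)-\E\!\int_0^t re^{-rs}u(p_{s-})\,ds$. Letting $t\to+\infty$, boundedness of $v$ kills the left-hand side and dominated convergence handles the integral; since $p_{s-}=p_s$ outside a countable set, we obtain
\[
\E\!\left[\int_0^{+\infty}re^{-rs}u(p_s)\,ds\right]=v(p),
\]
which, combined with the inequality furnished by \textbf{P1}, shows that $(p_t)$ attains the maximum.

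The substantive content is the cancellation mechanism: assumption (i) makes each jump of $v(p_t)$ line up with the first-order prediction from $Dv$, killing the It\^o jump correction, and the $\calH$-membership turns the remaining drift into the running payoff $-ru$. The only step needing a little care is the semimartingale structure of $(p_t)$ and integrability of $N$; both follow from the semigroup identity \eqref{Sp}, assumption (ii), and compactness of $\Delta(S)$, and I do not anticipate a genuine obstacle there.
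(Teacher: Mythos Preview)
Your proof is correct and follows essentially the same route as the paper: apply It\^o's formula to $e^{-rt}v(p_t)$ using the semimartingale decomposition \eqref{decompo}, kill the jump correction via the second half of (i) and the continuous quadratic-variation term via (ii), then use $p_s\in\calH$ to turn the drift into $-ru(p_s)$ before taking expectations and letting $t\to\infty$. Your treatment is in fact slightly more careful than the paper's on the $p_s$ vs.\ $p_{s-}$ distinction and on the martingale property of the stochastic integral.
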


\begin{remark}{\rm  As $R$ is the generator of the transition semi-group $P$, (\ref{Sp}) implies that each process  $(p_t)\in\calS(p)$  can be decomposed,  $\Prob$-a.s. for all $t\geq 0$, as
\be
\label{decompo}
 p_t=p+\int_0^t\; ^T\! Rp_sds+m_t,
 \ee
where $(m_t)$ is a martingale in the filtration generated by $(p_t)$. This martingale itself can be decomposed into a continuous and a purely discontinuous part (see Protter (2005)).
}\end{remark}

The most important condition in Theorem \ref{thm:sufficientcond} is (i), which states that the ``information process" $(p_t)$ must live in the non-revealing set ${\mathcal H}$ and can jump only on the flat parts of the graph of the limit value $v(\cdot)$; this condition is known to be sufficient in a class of simpler games, such as in Cardaliaguet and Rainer (2009b).  Condition (ii) is often satisfied in practice, as  the examples in the next section show.

\begin{proof}
We write the It\^o formula for $e^{-rt}v(p_t)$, using the decomposition \eqref{decompo}:
\[\begin{array}{rr}
\ds e^{-rt}v(p_t)=&\ds v(p)-\int_0^tre^{-rs}v(p_s)ds+\int_0^te^{-rs}Dv(p_{s-})dp_s+\frac 12\int_0^te^{-rs}D^2v(p_s)d\langle m^c\rangle_s\\
&\ds +\sum_{0<s\leq t}e^{-rs}\left(v(p_s)-v(p_{s-})-\langle Dv(p_{s-}),p_s-p_{s-}\rangle\right),
\end{array}\]
where $(m^c_t)$ is the continuous part of the martingale $(m_t)$ and $\langle m^c\rangle$ its quadratic variation.\\
Under the assumptions {\em (i)} and {\em(ii)}, the two last terms in this equation vanish.
Then, replacing $(p_t)$ by its martingale decomposition and taking  expectations on both sides, we get
\[ e^{-rt}\E[v(p_t)]=v(p)-\E\left[\int_0^te^{-rt}\left(rv(p_s)-\langle \;^T\!Rp_s,Dv(p_s)\rangle\right)ds\right].\]
It is time now to apply the assumption $\ p_s\in{\mathcal H} \ $, which   leads  to
\[ e^{-rt}\E[v(p_t)]=v(p)-\E\left[\int_0^tre^{-rt}u(p_s)ds\right].\]
The result follows when letting $t\to +\infty$.
\end{proof}

%//ou mettre les resultats sur les transitions endogenes ? ici, avant, plus loin ? //

%\begin{theorem}\label{th2}
%The sequence $(\tilde v_n(\cdot))_{n\in \dN}$ converges uniformly, and the limit
%$v(p)=\lim_{n\to \infty}\tilde v_n(p) $ is the unique viscosity solution of the equation
%\be
%\min\left\{ rw(p)+H(p,Dw(p))\ ; \ -\lambda_{\max}(p, D^2w(p))\right\}=0 \qquad {\rm in }\; \Delta(S),
%\ee
%where $H(p,\xi)=\min _{x\in \Delta(A)}\max_{y\in \Delta(B)}\left\{-\langle ^R(x,y)p,\xi\rangle -rg(p,x,y)\right\}$.
%\end{theorem}

%//ce n'est pas plutot un max sur x et un min sur y ?? //

\section{Examples and Applications}\label{sec:exemple}

  We here illustrate how \textbf{P1} can be used to provide explicit formulas for $v(p)$ in various cases. This section is organized as follows. We first provide in Lemmas \ref{lemmupper} and \ref{lemmlower} respectively upper and lower bounds on $v(\cdot)$ which always hold. We next identify several cases where these bounds coincide, thereby pinning down $v(\cdot)$. We finally discuss two examples in more detail.

  \subsection{Upper and lower bounds for the limit value}

Let $(p^*_t)$ be defined by $p^*_t=^T\!P_tp$. The process $(p^*_t)$ is the unique deterministic process in $\calS(p)$. It is the process of beliefs held by player 2 when player 1 plays in a non-revealing manner (that is, ignores his own private information) or  equivalently, the beliefs of an outside observer who would not observe the informed player's actions. Observe that $\E[p_t]=p^*_t$ for every $t\geq 0$ and $(p_t)\in \calS(p)$.

We denote by $\textrm{cav }u:=\Delta(S)\to \dR$ the concavification of $u(\cdot)$.

\begin{lemma}\label{lemmupper}
One has
\[v(p)\leq \int_0^\infty re^{-rt}\mathrm{cav}\ u(p^*_t)dt.\]
\end{lemma}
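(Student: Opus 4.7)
The plan is to combine characterization \textbf{P1} with Jensen's inequality applied to the concave function $\mathrm{cav}\,u$, exploiting the fact that every process in $\mathcal{S}(p)$ has deterministic mean equal to $(p^*_t)$.

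First I would observe that for any $(p_t)\in\mathcal{S}(p)$, the defining relation \eqref{Sp} taken with $t=0$ gives $\E[p_h]=\,^T\!P(h)p_0=\,^T\!P_h p=p^*_h$ for every $h\geq 0$ (using that $p_0=p$ is the initial condition implicit in the definition of $\mathcal{S}(p)$, consistent with the martingale case $R\equiv 0$ treated just before). Thus $(p_t)$ is a process with values in $\Delta(S)$ whose pointwise expectation is the deterministic curve $(p^*_t)$.

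Next I would apply Jensen's inequality stagewise. Since $u\leq \mathrm{cav}\,u$ pointwise and $\mathrm{cav}\,u$ is concave on $\Delta(S)$, for each $t\geq 0$,
\[
\E[u(p_t)]\;\leq\;\E[\mathrm{cav}\,u(p_t)]\;\leq\;\mathrm{cav}\,u(\E[p_t])\;=\;\mathrm{cav}\,u(p^*_t).
\]
Multiplying by $re^{-rt}$, integrating over $t\in[0,\infty)$ and invoking Fubini (the integrand is bounded since $u$ and $\mathrm{cav}\,u$ are bounded on $\Delta(S)$), I get
\[
\E\!\left[\int_0^\infty re^{-rt}u(p_t)\,dt\right]\;\leq\;\int_0^\infty re^{-rt}\,\mathrm{cav}\,u(p^*_t)\,dt.
\]

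Finally I would take the supremum of the left-hand side over $(p_t)\in\mathcal{S}(p)$. By \textbf{P1}, this supremum equals $v(p)$, yielding the claimed upper bound. There is no real obstacle here; the only point worth double-checking is the implicit initial condition $p_0=p$ used in computing $\E[p_t]=p^*_t$, but this is the natural reading of $\mathcal{S}(p)$ (and is exactly what is used, for instance, in the special case $R=0$ recovering \eqref{splitting}).
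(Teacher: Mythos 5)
Your proposal is correct and is essentially the paper's own proof: the paper likewise notes that $\E[p_t]=p^*_t$ for every $(p_t)\in\calS(p)$, then chains Fubini, $u\leq \mathrm{cav}\,u$, and Jensen's inequality for the concave function $\mathrm{cav}\,u$, and concludes via \textbf{P1}. Your remark about the implicit initial condition $p_0=p$ in the definition of $\calS(p)$ is the right reading and matches how the paper uses it.
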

\begin{proof}
For any $(p_t)\in \calS(p)$, one has
\[\E\int_0^\infty re^{-rt}u(p_t)dt= \int_0^\infty re^{-rt}\E[u(p_t)]dt \leq  \int_0^\infty re^{-rt}\textrm{cav}\ u(\E[p_t])dt=  \int_0^\infty re^{-rt}\textrm{cav}\ u(p^*_t)dt,\]
where the first equality follows from Fubini Theorem, and the first inequality follows from the inequality $u\leq \textrm{cav} \ u$ and from Jensen inequality.
\end{proof}
\bigskip

 For $s\in S$, we denote by $\delta_s\in \Delta(S)$ the probability measure which assigns probability one to $s$.

\begin{lemma}\label{lemmlower}
One has
\[v(p)\geq \int_0^\infty re^{-rt}u(p^*_t)dt,\]
and
\[v(p)\geq  \sum_{s\in S}u(\delta_s)\int_0^\infty re^{-rt}p^*_t(s)dt.\]
\end{lemma}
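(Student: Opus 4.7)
The plan is to exhibit for each inequality an explicit process in $\calS(p)$ whose expected discounted payoff matches the desired lower bound; the result then follows immediately from characterization \textbf{P1}.

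For the first inequality, I would take the deterministic process $p_t := p^*_t = {}^T\!P_t p$. Since $p^*_0 = p$ and the semigroup property ${}^T\!P_{t+h} = {}^T\!P_h \, {}^T\!P_t$ yields $\E[p^*_{t+h}\mid \calF_t] = p^*_{t+h} = {}^T\!P(h)\, p^*_t$, the process $(p^*_t)$ lies in $\calS(p)$. Plugging it into \textbf{P1} gives directly
\[v(p) \;\geq\; \E\!\left[\int_0^\infty re^{-rt} u(p^*_t)\,dt\right] \;=\; \int_0^\infty re^{-rt} u(p^*_t)\,dt.\]

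For the second inequality, the natural candidate is the ``fully revealing'' process $p_t := \delta_{s_t}$, where $(s_t)_{t\geq 0}$ is the underlying Markov chain with initial distribution $p$. I would check that this process belongs to $\calS(p)$: it is $\Delta(S)$-valued, càdlàg and adapted to the natural filtration of $(s_t)$, and the Markov property of $(s_t)$ gives
\[\E[\delta_{s_{t+h}}\mid \calF_t] \;=\; \sum_{s'\in S} P_h(s_t,s')\,\delta_{s'} \;=\; {}^T\!P(h)\,\delta_{s_t},\]
so the required identity \eqref{Sp} holds, and $\E[p_0] = \E[\delta_{s_0}] = p$. Then, using $u(\delta_s)$ and $\prob(s_t = s) = p^*_t(s)$,
\[\E[u(p_t)] \;=\; \sum_{s\in S} p^*_t(s)\, u(\delta_s),\]
and an application of \textbf{P1} together with Fubini's theorem yields the claimed bound
\[v(p) \;\geq\; \int_0^\infty re^{-rt}\sum_{s\in S} p^*_t(s)\,u(\delta_s)\,dt \;=\; \sum_{s\in S} u(\delta_s)\int_0^\infty re^{-rt} p^*_t(s)\,dt.\]

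There is no real obstacle here: the content of the lemma is essentially a dictionary between the two extreme strategies of player 1 (non-revealing versus fully revealing) and processes in $\calS(p)$. The only point worth stating carefully is the verification of \eqref{Sp} for $(\delta_{s_t})$, which is just the Markov property written in the notation of the paper.
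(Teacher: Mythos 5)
Your proof is correct and follows exactly the paper's argument: the first bound comes from the deterministic process $p_t:=p^*_t$ and the second from the fully revealing process $p_t:=\delta_{s_t}$, both plugged into \textbf{P1}. The only difference is that you spell out the verification that these processes lie in $\calS(p)$, which the paper leaves implicit.
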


\begin{proof}
These lower bounds for $v(p)$ are obtained when computing $\displaystyle\E\int_0^\infty re^{-rt}u(p_t)dt$ for specific processes $(p_t)\in \calS(p)$.

The first lower bound is obtained when setting $p_t:=p^*_t$. Intuitively, the right-hand side is then the amount which is secured by the strategy which plays at each $t$ an optimal (non-revealing) strategy in the average game associated with the current belief of player 2.

The second lower  bound obtains when setting $p_t:=\delta_{s_t}$. Indeed, one then has
\begin{eqnarray*}\E\int_0^\infty re^{-rt}u(p_t)dt&=& \int_0^\infty re^{-rt}\E[u(\delta_{s_t})]dt \\
&=& \sum_{s\in S} u(\delta_s)\int_0^\infty re^{-rt}\prob(s_t=s)dt \\
&=& \sum_{s\in S} u(\delta_s)\int_0^\infty re^{-rt}p^*_t (s)dt.
\end{eqnarray*}
Intuitively, the right-hand side is then the amount which is secured by a strategy which would announce at each $t$ the current state, and then play optimally in the corresponding game.
\end{proof}

\begin{corollary}\label{concaveconvex}
If $u$ is concave, then
$v(p)=\displaystyle \int_0^\infty re^{-rt}u(p^*_t)dt$.

  If $u$ is convex, then $v(p)=\displaystyle  \sum_{s\in S}u(\delta_s)\int_0^\infty re^{-rt}p^*_t(s)dt$.
\end{corollary}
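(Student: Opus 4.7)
The plan is to squeeze $v(p)$ between the upper bound furnished by Lemma \ref{lemmupper} and one of the two lower bounds of Lemma \ref{lemmlower}, by identifying the concavification $\mathrm{cav}\,u$ in each of the two cases. The whole content of the corollary is really this identification: once $\mathrm{cav}\,u$ is known explicitly, the proof reduces to observing that two of the previously established bounds coincide.

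In the concave case, the argument is immediate: since $u$ is already concave on $\Delta(S)$, one has $\mathrm{cav}\,u = u$, so the upper bound $\int_0^\infty re^{-rt}\mathrm{cav}\,u(p^*_t)\,dt$ of Lemma \ref{lemmupper} coincides with the first lower bound $\int_0^\infty re^{-rt}u(p^*_t)\,dt$ of Lemma \ref{lemmlower}, and sandwiching delivers the claimed equality.

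The convex case requires one short preliminary observation, namely that $\mathrm{cav}\,u(\tilde p) = \sum_{s\in S}\tilde p(s)u(\delta_s)$ whenever $u$ is convex on $\Delta(S)$. I would check this by introducing the affine function $L(\tilde p):=\sum_{s\in S}\tilde p(s)u(\delta_s)$ and showing (i) $L$ is a concave majorant of $u$, since convexity of $u$ gives $u(\tilde p)\leq\sum_s\tilde p(s)u(\delta_s)=L(\tilde p)$, and (ii) any concave function $w$ with $w\geq u$ satisfies $w(\tilde p)\geq \sum_s\tilde p(s)w(\delta_s)\geq\sum_s\tilde p(s)u(\delta_s)=L(\tilde p)$, by concavity of $w$ and the pointwise bound $w(\delta_s)\geq u(\delta_s)$. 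Hence $L$ is itself the smallest concave majorant, so $\mathrm{cav}\,u=L$. Plugging this into Lemma \ref{lemmupper} gives the upper bound $\sum_{s\in S}u(\delta_s)\int_0^\infty re^{-rt}p^*_t(s)\,dt$, which is exactly the second lower bound of Lemma \ref{lemmlower}, and the sandwich closes again.

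I do not anticipate any real obstacle: all the probabilistic and game-theoretic work is packaged in Lemmas \ref{lemmupper} and \ref{lemmlower}, and what remains is the elementary convex-analytic computation of $\mathrm{cav}\,u$ on the simplex in the two extreme regimes.
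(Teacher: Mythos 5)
Your proposal is correct and follows exactly the paper's own argument: sandwich $v$ between Lemma \ref{lemmupper} and the appropriate lower bound of Lemma \ref{lemmlower} after identifying $\mathrm{cav}\,u$ as $u$ in the concave case and as the affine interpolation $\sum_s \tilde p(s)u(\delta_s)$ in the convex case. The only difference is that you spell out the elementary verification of the latter identification, which the paper states without proof.
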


\begin{proof}
If $u$ is concave, then $u=\textrm{cav}\  u$, and the result follows from Lemmas \ref{lemmupper} and \ref{lemmlower} (first lower bound).

If $u$ is convex, then $\textrm{cav}\ u(p)=\sum_{s\in S}p(s)u(\delta_s)$ for each $p\in \Delta(S)$, and the result again follows from Lemmas \ref{lemmupper} and \ref{lemmlower} (second lower bound).
\end{proof}

We now illustrate in  these two simple cases the alternative characterization  {\bf P2} in Theorem \ref{th1}.
If $u$ is smooth and concave, then the map $$w(p):= \int_0^\infty re^{-rt}u(p^*_t)dt=  \int_0^\infty re^{-rt}u(^T\!P_tp)dt$$ is concave and  satisfies
\[
\ds rw(p)+H(p,Dw(p)) \; =  \ds rw(p)-\langle ^T\!Rp,Dw(p)\rg -ru(p) =0.\]
Therefore $w$ is a
solution to the Hamilton-Jacobi equation
\be\label{eqHJExemples}
\min\left\{ rv(p)+H(p,Dv(p))\ ; \ -\lambda_{\max}(p, D^2v(p))\right\}=0 \qquad {\rm in }\; \Delta(S).
\ee
By {\bf P2}, this shows (again) that $v=w$.
Recalling the definition of the non-revealing set ${\mathcal H}$ in \eqref{defsetH}, we here have  ${\mathcal H}=\Delta(S)$. As the deterministic process $(p^*_t)$ satisfies  conditions (i) and (ii) of Theorem \ref{thm:sufficientcond}, it is optimal in {\bf P1}: in other words, player 1 does not reveal anything.

Assume instead that $u$ is smooth and convex. Then the map
$$
 w(p):=  \sum_{s\in S}u(\delta_s)\int_0^\infty re^{-rt}p^*_t(s)dt
 $$
satisfies
$$
rw(p)+H(p,Dw(p))   = r\sum_s p_s u(\delta_s) -ru(p) \geq 0,
$$
because $u$ is convex. As $D^2w(p)=0$, $w$ solves \eqref{eqHJExemples} (actually one has to be more cautious here and to use the notion of viscosity solution of Definition \ref{defvisco}). From this and {\bf P2}, it follows that $v=w$.
Moreover, the non-revealing set is given by ${\mathcal H}= \{p: \ u(p)= {\rm cav}(u)(p)\}$ and thus contains $ S$. Then the process $(p_t:=\delta_{s_t})$ satisfies conditions (i) and (ii) of Theorem \ref{thm:sufficientcond}, so that it is optimal in {\bf P1}: player 1 reveals all his information.
\bigskip

In both the concave and the convex cases, $v(p)$ is given by $\displaystyle \int_0^\infty re^{-rt}\textrm{cav}\ u(p^*_t)dt$. We show that this latter formula is valid in many cases beyond the concave and convex case, but not always.

\subsection{Two-state games}

In the following, we focus on the case where $S:=\{s_1,s_2\}$ contains only two states, and we identify a probability measure over $S$ with the probability assigned to state $s_1$. In particular, $u$ will be viewed as a function defined over $[0,1]$.
We denote by $p^*_\infty:=\lim_{t\to \infty}p^*_t\in [0,1]$ the unique invariant measure of $(s_t)$, and let $\underline{p},\bar p \in [0,1]$ be such that $\underline p\leq p^*_\infty\leq \bar p$, and
$(p^*_\infty,\textrm{cav}\ u(p_\infty^*) )= \alpha (\underline{p},u(\underline{p}))+(1-\alpha) (\bar p,u(\bar p))$, for some $\alpha\in [0,1]$.

Such distributions $\underline{p}$ and $\bar p$ always exist, but need not to be uniquely defined.

\begin{lemma} Assume $\underline{p}<p^*_\infty<\bar p$.
One has \begin{equation}\label{formule}v(p)=\int_0^\infty re^{-rt}\textrm{cav}\  u(p^*_t)dt\mbox{ for each } p\in  [\underline{p},\bar p].\end{equation}
If moreover the equality $u=\mathrm{cav}\ u$ holds on $[0,\underline{p}]$ (respectively on $[\bar p,1]$), then (\ref{formule}) holds on the interval   $[0,\underline{p}]$ (respectively on $[\bar p,1]$).
\end{lemma}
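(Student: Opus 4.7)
The upper bound $v(p)\leq \int_0^\infty re^{-rt}\mathrm{cav}\,u(p^*_t)\,dt$ is given by Lemma~\ref{lemmupper}, so the plan is to produce a matching lower bound by constructing, for each $p$ under consideration, a process $(p_t)\in\calS(p)$ for which
\[
\E\int_0^\infty re^{-rt}u(p_t)\,dt=\int_0^\infty re^{-rt}\mathrm{cav}\,u(p^*_t)\,dt.
\]
Two preliminary remarks underlie everything. First, on $[\underline p,\bar p]$ the concavification $\mathrm{cav}\,u$ coincides with the affine interpolant $\ell$ joining $(\underline p,u(\underline p))$ to $(\bar p,u(\bar p))$: concavity of $\mathrm{cav}\,u$ together with $\mathrm{cav}\,u\geq \ell$ at the endpoints gives $\mathrm{cav}\,u\geq \ell$ on the interval, and the equality $\mathrm{cav}\,u(p^*_\infty)=\ell(p^*_\infty)$ at the interior point $p^*_\infty$ forces $\mathrm{cav}\,u\equiv \ell$ throughout. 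Second, in the two-state setting the scalar trajectory $t\mapsto p^*_t$ solves $\dot p^*_t=(\rho_{12}+\rho_{21})(p^*_\infty-p^*_t)$ and is monotone with limit $p^*_\infty$, so for $p\in[\underline p,\bar p]$ the entire path $(p^*_t)_{t\geq 0}$ stays in $[\underline p,\bar p]$.

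For $p\in[\underline p,\bar p]$ I build a pure-jump process $(p_t)$ valued in the two-point set $\{\underline p,\bar p\}\subset\Delta(S)$: initialize at $\underline p$ with probability $(\bar p-p)/(\bar p-\underline p)$ and at $\bar p$ otherwise (so $\E[p_0]=p$), and jump between the two atoms at constant rates $\lambda$ (from $\underline p$ to $\bar p$) and $\mu$ (from $\bar p$ to $\underline p$) determined by
\[
\lambda(\bar p-\underline p)=(\rho_{12}+\rho_{21})(p^*_\infty-\underline p),\qquad \mu(\bar p-\underline p)=(\rho_{12}+\rho_{21})(\bar p-p^*_\infty),
\]
both nonnegative thanks to $\underline p<p^*_\infty<\bar p$ and satisfying $\lambda+\mu=\rho_{12}+\rho_{21}$. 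The crux is the verification $(p_t)\in\calS(p)$. In the two-state setting the identity $\E[p_{t+h}\mid\calF_t]={}^T\!P_h p_t$ reduces, after projection on the first coordinate, to a scalar statement; a direct generator computation shows that both $h\mapsto \E[(p_{t+h})_1\mid p_t]$ and $h\mapsto ({}^T\!P_h p_t)_1$ solve the linear ODE $\dot y=(\rho_{12}+\rho_{21})(p^*_\infty-y)$ with the same initial value, hence coincide. Since $p_t\in\{\underline p,\bar p\}$ where $u=\ell$, affinity of $\ell$ together with $\E[p_t]=p^*_t$ yield
\[
\E[u(p_t)]=\E[\ell(p_t)]=\ell(\E[p_t])=\ell(p^*_t)=\mathrm{cav}\,u(p^*_t),
\]
and integrating against $re^{-rt}$ delivers the matching lower bound.

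For the extension to $p\in[0,\underline p]$ under the hypothesis $u=\mathrm{cav}\,u$ on $[0,\underline p]$, I concatenate: let $t^*\geq 0$ be the unique time with $p^*_{t^*}=\underline p$, set $p_t:=p^*_t$ (deterministic) for $t\in[0,t^*]$, and for $t\geq t^*$ run the two-point jump process above starting deterministically from $\underline p$. The $\calS(p)$-condition is trivial when $t+h\leq t^*$ and has just been established for $t\geq t^*$; in the mixed range $t<t^*\leq t+h$ it reduces via the semigroup property of $P$ to $\E[p_{t+h}]={}^T\!P_{t+h-t^*}\underline p={}^T\!P_{t+h-t^*}{}^T\!P_{t^*}p={}^T\!P_h p^*_t$. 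On $[0,t^*]$ the hypothesis gives $u(p^*_t)=\mathrm{cav}\,u(p^*_t)$, and on $[t^*,\infty)$ the previous step applies, so the two pieces combine to exactly $\int_0^\infty re^{-rt}\mathrm{cav}\,u(p^*_t)\,dt$. The interval $[\bar p,1]$ is entirely symmetric. The main technical point to watch is the $\calS(p)$-membership of the jump process, which collapses the vector semigroup identity to a single scalar ODE and thereby exploits the two-state dimension in an essential way.
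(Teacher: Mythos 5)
Your proof is correct and follows essentially the same route as the paper: the upper bound from Lemma \ref{lemmupper} is matched by a lower bound obtained from a two-point jump process on $\{\underline p,\bar p\}$, preceded by the deterministic path $p^*_t$ until the average belief reaches the interval; your rates $\lambda,\mu$ coincide with the paper's $\tilde\rho_{12},\tilde\rho_{21}$ (you parametrize them via $p^*_\infty$ instead of via the induced transition matrix $Q_t$, and verify the $\calS(p)$-condition by the scalar ODE rather than the Kolmogorov equation, but this is the same computation). No gaps.
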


If we further assume that $\textrm{cav}\ u$ is of class ${\mathcal C}^1$ with $\textrm{cav}\ u(p) >u(p)$ in $(\underline{p}, \bar p)$, then one can easily check that the non-revealing set  defined in \eqref{defsetH} satisfies  $\{\underline{p},\bar p\}\subset {\mathcal H}\subset [0, \underline{p}]\cup [\bar p, 1]$. In particular, \eqref{formule} follows from the construction---in the proof below---of a process $(p_t)\in {\mathcal S}(p)$ such that $p_t\in \{\underline{p},\bar p\}$ a.s.: this process satisfies (i) and (ii) of Theorem \ref{thm:sufficientcond} (because $v$ is affine on $[\underline{p},\bar p]$) and therefore is optimal in {\bf P1}.  If, moreover, the equality $u=\mathrm{cav}\ u$ holds in $[0, \underline{p}]$, then one has $[0, \underline{p}]\cup\{\bar p\}\subset {\mathcal H}$. We show in the proof below that there is a process $(p_t)\in {\mathcal S}(p)$ with $p_t\in [0,\underline{p}]\cup \{\bar p\}$ a.s.: the same arguments as above show that this process is optimal.
\bigskip

\begin{proof}
Define $\theta:=\inf\{t: p^*_t\in [\underline{p},\bar p]\}$, the first time at which the ``average" belief enters the interval $[\underline{p},\bar p]$. Note that $\theta <+\infty$ and that $p^*_t\in [\underline{p},\bar p]$ for every $t\geq \theta$.

The result follows from the fact, proven below, that there is a process $(p_t)\in \calS(p)$ such that $p_t=p_t^*$ for $t\leq \theta$, and $p_t\in \{\underline{p},\bar p\}$ for $t\geq \theta$, $\prob$-a.s.
Indeed, for any such process, one has
\begin{eqnarray*}\E\int_0^\infty re^{-rt}u(p_t)dt&=&  \int_0^\theta re^{-rt}u(p^*_t)dt +\int_\theta^\infty re^{-rt} \E[u(p_t)]dt \\
&=& \int_0^\theta re^{-rt}\textrm{cav}\ u(p^*_t)dt +\int_\theta^\infty re^{-rt}\textrm{cav}\ u(p^*_t)dt,
\end{eqnarray*}
which will conclude the proof of the lemma.

We now construct the process $(p_t)$. For $t\geq \theta$, define $\displaystyle Q_t:=\left(  \begin{array}{cc} q_{11}(t) & 1-q_{11}(t) \\
1-q_{22}(t) & q_{22}(t)\end{array}\right)$ by
\[q_{11}(t)=\frac{1}{\bar p-\underline{p}}\left(  \bar p -\left(\underline{ p}\times p_{11}(t)+(1-\underline{p})p_{21}(t)\right)\right)\] and \[ q_{22}(t)=\frac{1}{\bar p-\underline{p}}\left( 1- \bar p -\left((1-\bar p)p_{22}(t) +\bar p \times p_{12}(t) \right)\right).\]
Intuitively, $Q_t$ is the transition matrix between the two "states" $\underline{p}$ and $\bar p$ induced by $P_t$.  To see why, observe that, when starting from $\underline{p}$, the probability of being in state $s_1$ at time $t$ is $\underline{p} p_{11}(t)+(1-\underline{p})p_{21}(t)$, which is equal to $q_{11}(t) \underline{p} +(1-q_{11}(t)) \bar p$. Similarly, when starting from $\bar p $, the probability of being in $s_1$ at time $t$ is
$\bar p p_{11}(t)+(1-\bar{p})p_{21}(t)=(1-q_{22}(t))\underline{p}+q_{22}(t)\bar p$. An elementary computation using the Kolmogorov equation $P'_t=RP_t$ yields $Q'_t=\tilde R Q_t$, where the rate matrix $\displaystyle \tilde R=\left( \begin{array}{cc} -\tilde \rho_{12} &\tilde \rho_{12} \\
\tilde \rho_{21} &-\tilde \rho_{21}\end{array} \right)$ is given by
\[\tilde \rho_{12}=\frac{(1-\underline{p})\rho_{21}-\underline{p}\rho_{12}}{\bar p-\underline{p}}\mbox{ and }
\tilde \rho_{21}=\frac{\bar p \rho_{12} -(1-\bar p)\rho_{21}}{\bar p-\underline{p}}.\]
Both $\rho_{12}$ and $\rho_{21}$ are positive. Therefore, there is a Markov process $(q_t)_{t\geq \theta}$ with values in $\Delta(\{\underline{p},\bar p\})$, with rate matrix $\tilde R$ and  initial distribution  $q_\theta$  defined by $p_\theta^*=q_\theta(\underline{p}) \underline{p}+q_\theta(\bar p)\bar p$.

By construction, the  process $(p_t)_{t\geq \theta}$ defined by $p_t:= q_t(\underline{p}) \underline{p}+q_t(\bar p)\bar p$ is a Markov process and  $\E[p_{t+h}\mid \calF_t^p]=^TP_hp_t$ for all $t\geq \theta,h\geq 0$.
Set now $p_t=p_t^*$ for $t<\theta$. Then the process $(p_t)_{t\geq 0}$ satisfies the desired properties.  \end{proof}
\bigskip

In all previous cases, the equality $v(p)=\displaystyle \int_0^\infty re^{-rt}\textrm{cav}\  u(p_t^*)dt$ holds. This is however not always the case, as we now show.\\

\noindent{\bf Example:}
Let a game $A$, $B$ and $g:\{s_1,s_2\}\times A\times B$ be such that (i) $u(0)=u(1)=0$, (ii) $u(p)=1$ for $p\in [\frac13,\frac23]$, and (iii) $u$ is strictly convex on each of the intervals $[0,\frac13]$ and $[\frac23,1]$. Assume that transitions are such that $p^*_\infty \in (\frac13,\frac23)$.\footnote{The existence of such a game follows from Proposition 6 in Lehrer and Rosenberg (2003), which does not appear in the published version of the paper, Lehrer and Rosenberg (2010).}

\begin{proposition} For every $p\notin [\frac13,\frac23]$, one has
$\displaystyle v(p)< \int_0^\infty re^{-rt}\textrm{cav}\ u(p^*_t)dt$.
\end{proposition}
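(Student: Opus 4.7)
I would argue by contradiction, combining the equality case in the upper bound of Lemma~\ref{lemmupper} with the martingale-like constraint defining $\calS(p)$. Without loss of generality, take $p\in[0,1/3)$; the case $p\in(2/3,1]$ is handled analogously, replacing $\{0,1/3\}$ by $\{2/3,1\}$ and using $p^*_\infty<2/3$ instead of $p^*_\infty>1/3$. Since $t\mapsto p^*_t$ is continuous with $p^*_0=p<1/3$ and $p^*_\infty>1/3$, the hitting time $\theta:=\inf\{t\geq 0:p^*_t=1/3\}$ lies in $(0,+\infty)$ and $p^*_t\in[p,1/3)$ on $[0,\theta)$. Suppose, for contradiction, that $v(p)=\int_0^\infty re^{-rt}\textrm{cav}\ u(p^*_t)\,dt$, and let $(p_t)\in\calS(p)$ be an optimal process given by \textbf{P1}.

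The first step is to pin down the support of $p_t$ for $t\in[0,\theta)$. The proof of Lemma~\ref{lemmupper} provides the pointwise chain $\E[u(p_t)]\leq \E[\textrm{cav}\ u(p_t)]\leq \textrm{cav}\ u(p^*_t)$; integrating against $re^{-rt}$ and using the contradiction assumption forces, for a.e.\ $t\geq 0$, both $u(p_t)=\textrm{cav}\ u(p_t)$ $\prob$-a.s.\ and equality in Jensen's inequality at $p^*_t$. For $t\in[0,\theta)$, $\textrm{cav}\ u$ is affine with slope $3$ on $[0,1/3]$, so Jensen tightness forces the support of $p_t$ into $[0,1/3]$; and since strict convexity of $u$ on $[0,1/3]$ gives $u<\textrm{cav}\ u$ on $(0,1/3)$, the first equality pins the support further into $\{0,1/3\}$. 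Thus for a.e.\ $t\in(0,\theta)$, $p_t\in\{0,1/3\}$ $\prob$-a.s., with $\prob(p_t=1/3)=3p^*_t\in(0,1)$.

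The contradiction then comes from the infinitesimal dynamics near the state $1/3$. Choose $t_0\in(0,\theta)$ and a sequence $h_n\downarrow 0$ with $t_0+h_n<\theta$, such that the support constraint holds at both $t_0$ and every $t_0+h_n$ (which excludes only a Lebesgue null set of times). Let $\tilde p\in\Delta(S)$ denote the measure with $\tilde p(s_1)=1/3$. The event $\{p_{t_0}=\tilde p\}$ is $\calF_{t_0}$-measurable with positive probability, and on it the defining property of $\calS(p)$ gives $\E[p_{t_0+h_n}\mid p_{t_0}=\tilde p]=\,^T\!P_{h_n}\tilde p$. A first-order expansion $P_{h_n}=I+h_n R+o(h_n)$ yields, under the identification $\Delta(S)\simeq [0,1]$, the value $\,^T\!P_{h_n}\tilde p=1/3+h_n(2\rho_{21}-\rho_{12})/3+o(h_n)$, which is strictly greater than $1/3$ for small $h_n>0$, because $p^*_\infty=\rho_{21}/(\rho_{12}+\rho_{21})>1/3$ is equivalent to $2\rho_{21}>\rho_{12}$. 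On the other hand, $p_{t_0+h_n}\leq 1/3$ $\prob$-a.s.\ forces the same conditional mean to be $\leq 1/3$; this is the desired contradiction.

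The only delicate step is the support identification; it rests jointly on (a) $\{u=\textrm{cav}\ u\}=\{0\}\cup[1/3,2/3]\cup\{1\}$ and (b) the fact that the tangent affine piece of $\textrm{cav}\ u$ at $p^*_t\in(0,1/3)$ touches $\textrm{cav}\ u$ only along $[0,1/3]$, so that the two constraints intersect in exactly $\{0,1/3\}$. Once the support of $p_t$ is trapped in this two-point set while the generator drifts the mass concentrated at $1/3$ strictly upward, the final contradiction is immediate from a first-order Taylor expansion of $^T\!P_{h_n}$ at $h_n=0$.
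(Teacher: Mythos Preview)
Your proof is correct and shares with the paper's argument the overall architecture---contradiction, followed by the identification that for a.e.\ $t\in(0,\theta)$ the law of $p_t$ is concentrated on $\{0,1/3\}$ with $\prob(p_t=1/3)=3p_t^*$---but the two diverge in how the contradiction is ultimately extracted. The paper lets $t\to\theta^-$, observes that $\prob(p_t=1/3)=3p_t^*\to1$ and hence $p_{\theta^-}=1/3=p_\theta^*$ a.s., and then runs the relation \eqref{Sp} \emph{backward} via the invertible semigroup to conclude $p_t=p_t^*$ a.s.\ on $[0,\theta]$, contradicting $p_t\in\{0,1/3\}$. Your argument is purely local: you fix a single good time $t_0$, condition on $\{p_{t_0}=1/3\}$, and use \eqref{Sp} \emph{forward} together with the first-order expansion of $P_h$ to see that the conditional mean must exceed $1/3$, while the support constraint caps it at $1/3$. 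Your route is slightly more elementary (no limit at $\theta$, no semigroup inversion, only a Taylor expansion and the sign of $2\rho_{21}-\rho_{12}$ coming from $p_\infty^*>1/3$); the paper's route yields the stronger intermediate conclusion that the constraint would force $(p_t)$ to be deterministic on the whole interval $[0,\theta]$.
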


\begin{proof}
Fix $p\in [0,\frac13)$ for concreteness.  We argue by contradiction, and assume that
\[\int_0^\infty re^{-rt}\E[u(p_t)]dt=\int_0^\infty re^{-rt}\textrm{cav}\ u(p_t^*)dt,\] for some  process $(p_t)\in \calS(p)$. Since $\E[u(p_t)]\leq \textrm{cav} \ u(p_t^*)$ for all $t$, one has
$\E[u(p_t)]= \textrm{cav} \ u(p_t^*)$ for Leb-a.e. $t\in \dR^*$.

 Let   $\theta :=\inf\{t: p^*_t\geq \frac13\}$. Observe that $\textrm{cav} \ u(p_t^*)=1$ for $t\geq \theta$, and $\textrm{cav} \ u(p_t^*)=3p_t ^*$ for $t\leq \theta$.

For $t<\theta$, the equality $\E[u(p_t)]= \textrm{cav} \ u(p_t^*)$ implies that the law of $p_t$ is concentrated on $\{0,\frac13\}$, with $\prob(p_t=\frac13)=3p_t^*$. As $t\to \theta^-$ we get  $\prob(p_{\theta^-}=\frac13)=1$, so that $p_{\theta-}=p^*_\theta$ a.s.. Then \eqref{Sp} implies that $p_t= (e^{-^T\!R(\theta-t)})p_\theta^*= p^*_t$ a.s. for $t\in[0,\theta]$, which is impossible since $p_t\in \{0, \frac13\}$ a.s..

%However, since $p_\infty^*>\frac13$, one has $\E[p_{t+h}\mid \calF_t^p]>\frac13$ on the event $p_t=\frac13$. That is, the equality
%$\E[u(p_t)]= \textrm{cav} \ u(p_t^*)$ for some $t<\theta$ implies $\E[u(p_{t+h})]<\textrm{cav} \ u(p_{t+h}^*)$ for all $0<h<\theta-t$ -- a contradiction.
\end{proof}

Intuitively, maximizing $\E[u(p_t)]$ leads player 1 to disclose information at time $t$ which he later wishes he hadn't disclosed.

\subsection{An explicit example}

We conclude this section by providing an explicit formula for the limit value in an example due to Renault (2006) (see also H\"orner, Rosenberg, Solan and Vieille (2010)). In that example, both players have two actions, and the payoffs in the two states are given by
\[\left(\begin{array}{cc}
1  & 0 \\
0 & 0
\end{array}\right) \mbox{ and  } \left(\begin{array}{cc}
0 & 0 \\
0 & 1
\end{array}\right) \]
Transitions occur at the rate $\pi>0$, so that
$\displaystyle R= \left(\begin{array}{cc}
-\pi  & \pi \\
\pi & -\pi
\end{array}\right)$.
Observe that $R= M\left(\begin{array}{cc}
-2\pi  & 0 \\
0 & 0
\end{array}\right)M^{-1}$, where $M=\left(\begin{array}{cc}
1  & 1 \\
-1 & 1
\end{array}\right)$, so that
\[P_t= e^{tR}= M\left(\begin{array}{cc}
e^{-2\pi t}  & 0 \\
0 & 1
\end{array}\right)M^{-1}= \frac12 \left(\begin{array}{cc}
1 & 1 \\
1 & 1
\end{array}\right) +\frac{e^{-2t\pi}}{2}\left(\begin{array}{cc}
1 & -1 \\
-1 & 1
\end{array}\right).\]

Note that $u(p)=p(1-p)$ is concave, hence
\begin{equation}\label{eq1}v(p)=\int_0^\infty re^{-rt}u(p_t^*)dt=\int_0^\infty re^{-rt} p_t^*(1-p_t^*)dt.\end{equation}
On the other hand, $p_t^*$ is given by $\displaystyle \left(\begin{array}{c}
1-p_t^* \\
p_t^*
\end{array}\right)= P(t)  \left(\begin{array}{c}
1-p \\
p
\end{array}\right).
$
Integration in (\ref{eq1}) leads to
\[v(p)=\frac14-\frac{(2p-1)^2}{4}\times \frac{r}{r+4\pi}.\]

\section{Proof of Theorem \ref{th1}}\label{sec:proofmain}

In this section, we  prove \textbf{P1}. Statement \textbf{P2} is a particular case of Theorem \ref{mainth3} below, and we postpone the proof to section \ref{sec:nonendo}. The proof of \textbf{P1} is divided in three parts. We first prove that
$$\displaystyle \liminf_n \tilde v_n(p) \geq \sup_{\calS(p)}\E\left[\int_0^\infty re^{-rt} u(p_t)dt\right], $$ and next that $$\displaystyle \limsup_n \tilde v_n(p) \leq \sup_{\calS(p)}\E\left[\int_0^\infty re^{-rt} u(p_t)dt\right].$$ We finally show that the supremum is reached.

\subsection{Step 1}

Let $(p_t)\in \calS(p)$ be arbitrary. We will prove that  $\displaystyle \liminf_n \tilde v_n(p) \geq \E\left[\int_0^\infty re^{-rt} u(p_t)dt\right]$. The proof will make use of Lemma \ref{lemm2} below. This lemma is conceptually similar to (but technically more involved than) the elementary, so-called splitting lemma (Aumann and Maschler (1995)) which we quote here.

\begin{lemma}\label{lemm1}
Let a finite set $L$,  and a probability $p\in \Delta(S)$ be given, such that $p=\displaystyle \sum_{l\in L}\alpha_lp_l$, for some $\alpha\in\Delta(L)$ and $p_l\in \Delta(S)$ ($l\in L$). Then there is a probability distribution $\prob$ over $L\times S$ with marginals given by $\alpha$ and $p$, and such that the conditional law of $s$ given $l$ is $p_l$.
\end{lemma}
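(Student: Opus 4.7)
The plan is to construct $\prob$ by specifying its density on the finite set $L\times S$ as the obvious product
\[
\prob(\{(l,s)\}) := \alpha_l\, p_l(s), \qquad (l,s)\in L\times S.
\]
This is manifestly nonnegative, and a double summation (first over $s$, using that each $p_l$ is a probability on $S$, then over $l$, using $\alpha\in\Delta(L)$) shows it totals to $1$, so $\prob$ is indeed a probability measure on $L\times S$.

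Then I would verify the three required properties as one-line computations. The $L$-marginal satisfies $\prob(\{l\}\times S)=\alpha_l\sum_{s\in S}p_l(s)=\alpha_l$, so it equals $\alpha$. The $S$-marginal satisfies $\prob(L\times\{s\})=\sum_{l\in L}\alpha_l p_l(s)=p(s)$ precisely by the assumed decomposition $p=\sum_l \alpha_l p_l$. Finally, for any $l\in L$ with $\alpha_l>0$, the definition of conditional probability gives $\prob(s\mid l)=\prob(\{(l,s)\})/\alpha_l=p_l(s)$, which is the law $p_l$; for $l$ with $\alpha_l=0$ the conditional law can be set to $p_l$ by convention, with no effect on the joint measure.

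Since everything reduces to unwinding definitions, there is no real obstacle. The only minor point to keep in mind is the harmless null-mass case $\alpha_l=0$, where the conditional is defined by convention rather than by the ratio formula.
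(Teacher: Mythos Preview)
Your construction is correct and is exactly the standard argument for the splitting lemma. The paper does not actually prove this statement: it is merely quoted as the ``elementary, so-called splitting lemma'' from Aumann and Maschler (1995), so there is no paper proof to compare against; your explicit verification is precisely what one would write if a proof were required.
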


The usual interpretation of Lemma \ref{lemm1} is as follows. Assume some player, informed of the realization of $s$, draws $l$ according to $\prob(l\mid s)$ and announces $l$. Then, the posterior belief of an uninformed player with prior belief $p$ is equal to $p_l$. Lemma \ref{lemm1} formalizes the extent to which an informed player can ``manipulate" the belief of an uninformed player by means of a public announcement.

 Lemma \ref{lemm2} below is the appropriate generalization of Lemma \ref{lemm1} to a dynamic world with changing states.
Some notation is required. We fix a Markov chain $(\omega_m)_{m\in \dN}$ over $S$, with initial law $p\in \Delta(S)$,  transition matrix $\Pi=(\pi(s'\mid s))_{s,s'\in S}$, and law $\prob$. Given a sequence $\mu=(\mu_m)_{m\in \dN}$, where $\mu_m$ is a transition function from $(\Delta(S))^{m}\times S$ to $\Delta(S)$,\footnote{We write $\mu_m(q^m,s;\cdot)$. Thus, $\mu_m(q^m,s;\cdot)$ is a probability distribution over $\Delta(S)$ for each given $q^m=(q_0,\ldots,q_m)\in \Delta(S)^m$, and $s\in S$, and the probability $\mu_m(q^m,s;A)$ assigned to a fixed (measurable) set $A\subset \Delta(S)$ is measurable in $(q^m,s)$.} we denote by $\mu\circ \prob$ the probability measure over $(\Delta(S)\times S)^\dN$ which is obtained as follows. Together with $\Pi$, $\mu_m$ induces a transition function $\nu_m$ from $\Delta(S)^m\times S$ to $\Delta(S)\times S$ defined by
\begin{equation}\label{eqalpha} \nu_m(q^m,\omega_m;q_{m+1},\omega_{m+1})=\pi(\omega_{m+1}\mid \omega_m)\mu_m(q^m,\omega_{m+1};q_{m+1}).\end{equation}
The distribution $\mu\circ \prob$ is the probability measure over $(\Delta(S)\times S)^\dN$ induced by the sequence $(\nu_m)_{m\in \dN}$ (by means of the Ionescu-Tulcea Theorem) and the initial distribution of $(q_0,\omega_0)$ which assigns probability $p(\omega_0)$ to $(\{p\},\omega_0)\in \Delta(S)\times S$.

To follow-up on the above interpretation, we think of an uninformed player with belief $p$ over $\omega_0$, and of an informed player who observes the successive realizations of $(\omega_m)$, and picks a new belief $q_{m+1}\in \Delta(S)$ for the uninformed player, as a (random) function $\mu_m$ of the earlier beliefs $q^m= (q_0,\ldots, q_m)$ and of the realized state $\omega_{m+1}$ in stage $m+1$. The distribution $\mu\circ \prob$ is the induced distribution over sequences of beliefs and states.

\begin{lemma}\label{lemm2}
Let  $Q$ be a probability distribution over $\Delta(S)^\dN$ such that $Q$-a.s.,  $q_0= q$ and that $\E[q_{m+1}\mid q^m]=^T\Pi q_m$ for each $m$.

Then there exists a sequence $\mu=(\mu_m)$ such that the probability measure $\mu\circ\prob$ satisfies \textbf{C1} and \textbf{C2} below.
\begin{description}
\item[C1] The marginal of $\mu\circ \prob$ over $\Delta(S)^\dN$ is $Q$.
\item[C2] For each $m\geq 0$, $q_m$ is (a version of) the conditional law of $\omega_m$ given
 $q^{m}$.
\end{description}
\end{lemma}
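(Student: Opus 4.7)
The idea is to define $\mu_m$ explicitly by a Bayes-type disintegration formula and then check \textbf{C1} and \textbf{C2} simultaneously by induction on $m$. Since $\Delta(S)$ is Polish, we may fix a regular version $Q(dq_{m+1}\mid q^m)$ of the conditional distribution of $q_{m+1}$ under $Q$ given $q^m$. The natural candidate, dictated by the requirement that $q_{m+1}$ be a posterior of $\omega_{m+1}$, is
\[
\mu_m(q^m,s;dq_{m+1}) \;:=\; \frac{q_{m+1}(s)}{({}^T\Pi q_m)(s)}\, Q(dq_{m+1}\mid q^m),
\]
defined wherever $({}^T\Pi q_m)(s)>0$ and set arbitrarily (as any fixed probability on $\Delta(S)$) otherwise.

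First I would check that $\mu_m(q^m,s;\cdot)$ is a probability measure. The total mass is
\[
\int_{\Delta(S)}\frac{q_{m+1}(s)}{({}^T\Pi q_m)(s)}\, Q(dq_{m+1}\mid q^m)\;=\;\frac{\E_{Q}[q_{m+1}(s)\mid q^m]}{({}^T\Pi q_m)(s)}\;=\;1,
\]
using precisely the martingale hypothesis $\E[q_{m+1}\mid q^m]={}^T\Pi q_m$. The degenerate case $({}^T\Pi q_m)(s)=0$ is harmless: the same martingale identity forces $q_{m+1}(s)=0$, $Q(\cdot\mid q^m)$--a.s., so the singular values of $s$ will be visited with probability $0$ under $\mu\circ\prob$ (as will be visible in the computation of the next step).

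Next I would verify \textbf{C2} inductively, the base case $m=0$ holding because $q_0=p$ deterministically and $\omega_0$ has unconditional law $p$. Assuming \textbf{C2} at stage $m$, the factorization \eqref{eqalpha} gives, under $\mu\circ\prob$,
\[
\Prob(\omega_{m+1}=s,\; q_{m+1}\in dq\mid q^m) \;=\; \Big(\sum_{\omega_m} q_m(\omega_m)\,\pi(s\mid \omega_m)\Big)\,\mu_m(q^m,s;dq) \;=\; ({}^T\Pi q_m)(s)\,\mu_m(q^m,s;dq),
\]
which, by the definition of $\mu_m$, simplifies to $q(s)\,Q(dq\mid q^m)$. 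Summing over $s\in S$ yields $Q(dq\mid q^m)$, so the conditional law of $q_{m+1}$ given $q^m$ under $\mu\circ\prob$ coincides with that under $Q$; by Kolmogorov consistency (and the common initial value) this delivers \textbf{C1}. Dividing the joint expression by this marginal then gives $\Prob(\omega_{m+1}=s\mid q^{m+1})=q_{m+1}(s)$, which is \textbf{C2} at stage $m+1$.

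The only delicate points I expect are measure-theoretic: the existence and joint measurability of the regular conditional kernel $Q(\cdot\mid q^m)$, and checking that $\mu_m$ is measurable in $(q^m,s)$ so that the Ionescu--Tulcea construction of $\mu\circ\prob$ is legitimate. Both follow from the Polish structure of $\Delta(S)$ and a standard disintegration argument. The choice of a ``dummy'' extension on the $({}^T\Pi q_m)(s)=0$ set is then arbitrary and harmless, as explained above.
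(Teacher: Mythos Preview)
Your proposal is correct and is essentially the paper's own proof: the paper defines $\theta_m(q^m;F,s)=\int_F q_{m+1}(s)\,Q(q^m;dq_{m+1})$ and sets $\mu_m(q^m,s;F)=\theta_m(q^m;F,s)/\theta_m(q^m;s)$, which after using the hypothesis $\E[q_{m+1}\mid q^m]={}^T\Pi q_m$ to evaluate the denominator is exactly your formula, and then runs the same induction (first recovering the $Q$-conditional law of $q_{m+1}$ given $q^m$, then Bayes to get $\Prob(\omega_{m+1}=s\mid q^{m+1})=q_{m+1}(s)$). Your organization---computing the joint law of $(\omega_{m+1},q_{m+1})$ given $q^m$ first and reading off both conclusions from it---is slightly more economical than the paper's two separate computations, but the content is identical.
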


The proof of Lemma \ref{lemm2} is in the Appendix. We now construct a behavior strategy $\sigma_1$ of player 1 in $G_n(p)$. We let $\alpha:\Delta(S)\to \Delta(A)$ be a (measurable) function such that $\alpha(\tilde p)$ is an optimal strategy of player 1 in the one-shot, average game $\Gamma(\tilde p)$, for each $\tilde p\in \Delta(S)$.

For $k\geq 0$, we set $s_k^{(n)}:=s_{k/n}$ and $p_k^{(n)}:=p_{k/n}$. The sequence $(s_k^{(n)})_{k\in\dN}$ is a Markov chain with transition function $P_{1/n}$ and initial distribution $p$. We let $(\mu_k)_{k\in \dN}$ be the transition functions obtained by applying Lemma \ref{lemm2} with   $\omega_k:=s_k^{(n)}$ and $q_k:=p_k^{(n)}$.

According to $\sigma_1$, player 1 picks $p_{k+1}^{(n)}\in \Delta(S)$ according to $\mu_k(p_0^{(n)},\ldots, p_k^{(n)},s_{k+1}^{(n)};\cdot)$ then plays the mixed action $ \alpha(p_{k+1}^{(n)})\in \Delta(A)$.

Let $\sigma_2$ be an arbitrary strategy of player 2 in $G_n(p)$. For any given stage $k$, one has
\begin{eqnarray*}
\E[g(s_k^{(n)},a_k,b_k)] &=& \E\left[\E\left[g(s_k^{(n)},a_k,b_k)\mid p_0^{(n)},\ldots, p_k^{(n)}\right]\right]\\
&=&  \E\left[\E\left[g(s_k^{(n)},\alpha(p^{(n)}_k),b_k)\mid p_0^{(n)},\ldots, p_k^{(n)}\right]\right]\\
&=&  \E\left[\E\left[g(p_k^{(n)},\alpha(p^{(n)}_k),b_k)\mid p_0^{(n)},\ldots, p_k^{(n)}\right]\right]\\
&\geq &  \E\left[\E\left[u(p_k^{(n)})\mid p_0^{(n)},\ldots, p_k^{(n)}\right]\right] =\E[u(p_k^{(n)})].
\end{eqnarray*}

Summing over $k$, and denoting by $t\mapsto p_t^{(n)}$ the step process equal to $p_{\frac{k}{n}}$ over the interval $[\frac{k}{n},\frac{k+1}{n})$, one has therefore
\begin{equation}\label{eq3}\tilde v_n(p)\geq \E\left[\int_0^\infty re^{-rt}u(p_t^{(n)})dt\right].\end{equation}

Since $(p_t)$ is c\`adl\`ag, the map $t\mapsto p_t$ has $\prob$-a.s. at most countably many discontinuity points. Note also that $\lim_n p_t^{(n)}=p_t$ at every continuity point. Thus, one has $\lim_n p_t^{(n)}=p_t$  $\prob\otimes \textrm{Leb}$-a.s. . By dominated convergence, this implies
\[\lim_{n\to +\infty}\E\int_0^\infty re^{-rt}u(p_t^{(n)})dt=\E\int_0^\infty re^{-rt}u(p_t)dt.\]

By (\ref{eq3}), one thus has
\[\liminf_{n\to +\infty} \tilde v_n(p)\geq \E\int_0^\infty e^{-rt}u(p_t)dt.\]

\subsection{Step 2}

Let $n\in \dN^*$, and $\sigma_1$ be an arbitrary strategy of player 1. We adapt Aumann and Maschler (1995), and construct a reply $\sigma_2$ of player 2 recursively. Together with $\sigma_1$, the strategy $\sigma_2$ induces a probability distribution over plays of the game, denoted $\prob_\sigma$. Given a stage $k$, we denote by $p_k:=\Prob_\sigma(s_{k/n}=\cdot \mid \calH_k^{II})$ the belief of player 2  at the beginning  of stage $k$, where $\calH_{k}^{II}$ is the information available to player 2, that is, the $\sigma$-algebra generated by $(a_i,b_i)_{i=1,\ldots,k-1}$.\footnote{The belief $p_k$ is used to define $\sigma_2$ in stage $k$, and the computation of $p_k$ uses the definition of $\sigma_2$ in the first $k-1$ stages only. Hence, there is no circularity.} We let $\sigma_2$ play in stage $k$ a best reply in the average, one-shot game $\Gamma(p_k)$ to the conditional distribution of $a_k$ given $\calH_{k}^{II}$.

We introduce the belief  $\tilde p_k:=\Prob(s_{k/n}=\cdot \mid \calH_{k+1}^{II})$ held by player 2 at the end of stage $k$ (that is, after observing $a_k$), so that $p_{k+1}=^T\!P_{1/n}\tilde p_k$.

By Lemmas 2.5 and 2.6 in Mertens, Sorin and Zamir (1994), one has\footnote{denoting by $|\cdot |_1$ the $L^1$-norm on $\Delta(S)$.}
\[\E[g(s_{k/n},a_k,b_k)\mid \calH_{k}^{II}] \leq u(p_k)+\E\left[ |p_k-\tilde p_k|_1\mid \calH_{k}^{II}\right].\]
Taking expectations and summing over stages, one obtains
\begin{equation}\label{eq3bis}
\E\left[\lambda_n \sum_{k=0}^\infty (1-\lambda_n)^{k}g_k\right]\leq \E\left[\lambda_n\sum_{k=0}^\infty (1-\lambda_n)^{k}u(p_k)\right]+\E\left[\lambda_n\sum_{k= 0}^\infty (1-\lambda_n)^{k}|p_k-\tilde p_{k}|_1\right]
\end{equation}
We now introduce a process $(\bar p_t)$ in $\calS(p)$ defined by $\bar p_{k/n}=p_k$ and $\bar p_t=^T\!P_{t-\frac{k}{n}}\bar p_{k/n}$ for each $k$ and $t\in [\frac{k}{n},\frac{k+1}{n})$.

Choose a constant $c>0$  such that $|\bar p_t-\bar p_{k/n}|_1\leq \frac{c}{n}$ for each $k,n\in \dN^*$ and $t\in [\frac{k}{n},\frac{k+1}{n})$.
We first bound the first term on the right-hand side of (\ref{eq3bis}):
\[\E\left[ \lambda_n\sum_{k=0}^\infty(1-\lambda_n)^{k-1} u(p_k)\right]
\leq \E\left[ \int_0^\infty re^{-rt}u(\bar p_t)dt\right] +\sup_{t\geq 0} |u(\bar p_t)-u(\bar p_{\frac{1}{n}\lfloor nt\rfloor})|.\]
Since $u$ is  Lipschitz for the $L^1$-norm, one has, for some $C$,
\[\E\left[ \lambda_n\sum_{k=0}^\infty(1-\lambda_n)^{k} u(p_k)\right]\leq
\sup_{(p_t)\in \calS(p)}\E\left[  \int_0^\infty re^{-rt}u(p_t)dt\right]+\frac{C}{n}.\]
Next, adapting Mertens Sorin and Zamir (1994), one has
\begin{eqnarray*}\E\left[\lambda_n\sum_{k=0}^\infty (1-\lambda_n)^{k}|p_k-\tilde p_k|_1\right]
&=& \sum_{s\in S}\lambda_n\sum_{k=0}^\infty (1-\lambda_n)^{k}
\E\left[ |p_k(s)-\tilde p_{k}(s)|\right]\\
&\leq &  \sum_{s\in S}\left(\lambda_n\sum_{k=0}^\infty (1-\lambda_n)^{k}
\E\left[ |p_k(s)-\tilde p_{k}(s)|^2\right]\right)^{\frac{1}{2}}\\
& =&\sum_{s\in S}\left(\lambda_n\sum_{k=0}^\infty (1-\lambda_n)^{k}\left(\E(p_k^2(s))-\E(\tilde p_{k}^2(s))\right)\right)^{\frac{1}{2}}
,\end{eqnarray*}
which is also equal to
$$
 \sum_{s\in S}\left(\lambda_n\sum_{k=0}^\infty (1-\lambda_n)^{k}\left(\E(p_k^2(s))-\E(p_{k+1}^2(s))
+\E(p_{k+1}^2(s))-\E(\tilde p_{k}^2(s))\right)\right)^{\frac{1}{2}}.
$$
Therefore
\begin{eqnarray*}\E\left[\lambda_n\sum_{k=0}^\infty (1-\lambda_n)^{k}|p_k-\tilde p_k|_1\right]
&\leq &\sum_{s\in S}\left(\lambda_n+\lambda_n\sum_{k=0}^\infty (1-\lambda_n)^{k}\E\left(p_{k+1}^2(s)-\tilde p_{k}^2(s)\right)\right)^{\frac{1}{2}}\\
&\leq & |S|\left(\lambda_n +\frac{2c}{n}\right)^\frac12 .\end{eqnarray*}
Plugging into (\ref{eq3bis}), and since $\sigma_1$ is arbitrary, this yields
\[\tilde v_n(p) \leq  \sup_{(p_t)\in \calS(p)}\E\left[  \int_0^\infty re^{-rt}u(p_t)dt\right]  +\frac Cn+ |S| \left( \lambda_n +\frac{2c}{n}\right)^{\frac{1}{2}},\]
and the inequality
$\displaystyle \limsup_{n\to \infty} \tilde v_n(p)\leq \sup_{(p_t)\in \calS(p)}\E\left[  \int_0^\infty re^{-rt}u(p_t)dt\right]$ follows.

\subsection{Step 3}

We conclude by proving that the supremum in \textbf{P1} is reached.
First we remark that the claim {\bf P1} can alternatively be written as
\[ \displaystyle v(p)=\max_{\prob\in \Sigma(p)} \E_\prob\left[ \int_0^{+\infty} re^{-rt}u(p_t)dt\right],\]
where, if $\Omega$ denotes the set of  c\`adl\`ag functions from $\dR^+$ to $\Delta(S)$  and $(p_t)$ the canonical process on $\Omega$, $\Sigma(p)$ is the set of probability measures $\prob$ on $\Omega$ under which $(p_t)$ belongs to $\calS(p)$. This reformulation permits us to use classical arguments: We apply the tightness criterion of Meyer and Zheng (1984). Let $(\prob_n)_n$ be a maximizing sequence for \textbf{P1}. Although strictly speaking the coordinate process $(p_t)$ need not be a quasimartingale, Theorem 4 in Meyer and Zheng (1984) still applies.\footnote{One may e.g. consider the laws $\prob_n^T$ of the coordinate process stopped at $T$ and then use a diagonal argument. Alternatively, we may apply Theorem 4 directly to the "damped" process $q_t:=e^{-at} p_t$ where $a\in (0,r)$, with values in the cone spanned by $\Delta(S)$.}

Denote by $\bar \prob$ the weak limit of some subsequence of $(\prob_n)_n$. It is routine to show that $\bar \prob\in \calS(p)$. Finally, since the functional $\displaystyle \E\left[\int_0^\infty e^{-rt} u(p_t) dt\right]$ is weak continuous, $\bar\prob$ is a maximizer in \textbf{P1}.

\section{Games with  endogenous transitions}\label{sec:nonendo}

In this section we extend Theorem \ref{th1} to games with  endogenous transitions. We show that the limit value exists and is characterized as the unique  viscosity solution of a Hamilton-Jacobi equation.

\subsection{Model}

We now introduce a more general model in which  players control  transitions. As before, $S$ is a finite set of states,  $A$ and $B$ are  finite action sets and $g:S\times A\times B\rightarrow\dR$ denotes the payoff function. In contrast with the previous sections, we here assume that the generator depends on actions : $R:=(R(s,s';a,b),s,s'\in S,a\in A,b\in B)$, where  for all $(s,a,b)\in S\times A\times B$,
\begin{itemize}
\item for all $s'\neq s$, $R(s,s';a,b)\geq 0$,
\item $\ds \sum_{s'\in S}R(s,s';a,b)=0$.
\end{itemize}
For fixed $(a,b)\in A\times B$, we denote by $t\mapsto P_t(a,b)$ the transition semi-group of the Markov chain with transition rates $R(\cdot;a,b)$. Given $t\geq 0$, and $x\in \Delta(A)$, $y\in \Delta(B)$, we set  $\ds P_t(x,y):=\sum_{a\in A,b\in B}x(a)y(b)P_t(a,b)$.

For $n\in\dN^*$, $G_n(p)$ now  denotes the two-player game with infinitely many stages, where, at each stage $k\in\dN$, players first choose actions $a_k\in A$ and $b_k\in B$,  the payoff is $g(s^{(n)}_k,a_k,b_k)$, and next $s^{(n)}_{k+1}$ is drawn using $P_{1/n}(s^{(n)}_k,\cdot;a_k,b_k)$.

The information structure of the game is as before: player 1 observes past and current realizations of the states $s^{(n)}_k$ and both players observe past actions of their opponent, while payoffs are not observed.
As before,
the weight of stage $k$ in $G_n(p)$ is $\lambda_n(1-\lambda_n)^k$, with $\lambda_n:=\displaystyle 1-e^{-r/n}$.

The value of the game $G_n(p)$ is still denoted $\tilde v_n(p)$.

\subsection{Viscosity Solutions}

In this section, we introduce the Hamilton-Jacobi equation which characterizes the limit value and we define the notion of weak solution (in the viscosity sense) used in Theorem \ref{th1}.

We first need to fix some notations. As the partial differential equations encountered below take place in the simplex $\Delta(S)$, we have to define a tangent vector space $T_{\Delta(S)}(p)$ to the set $\Delta(S)$ at each point $p$:
$$
T_{\Delta(S)}(p):= \left\{ z=(z_s)_{s\in S} \in \dR^{|S|},\; \exists \ep>0, \; p+\ep z\in \Delta(S), \ p-\ep z\in \Delta(S) \right\}\;.
$$
For instance, if $p$ belongs to the relative interior of $\Delta(S)$, one has $T_{\Delta(S)}(p)=\{z\in \dR^{|S|}, \; \sum_{s\in S} z_s=0\}$, while $T_{\Delta(S)}(p)=\{0\}$
if $p$ is an extreme point of $\Delta(S)$. We also define, for any symmetric matrix $X\in \dR^{|S|\times|S|}$,
\be\label{deflambdamax}
\lambda_{\max}(p, X): = \sup\left\{ \frac{\lg X z,z\rg}{|z|^2}, \; z\in T_{\Delta(S)}(p)\backslash\{0\}\right\}.
\ee
By convention $\lambda_{\max}(p, X)=-\infty$ if $T_{\Delta(S)}(p)=\{0\}$. Note that $\lambda_{\max}(p, X)$ is a kind of maximal eigenvalue of $X$ on the tangent space $T_{\Delta(S)}(p)$.

Given a continuous map $H:\Delta(S)\times \dR^{|S|}\to \dR$, we consider the Hamilton-Jacobi equation
\be\label{HJ11}
\min\left\{ rw(p)+H(p,Dw(p))\ ; \ -\lambda_{\max}(p, D^2w(p))\right\}=0 \qquad {\rm in }\; \Delta(S).
\ee

\begin{definition}\label{defvisco} A map $w: \Delta(S)\to \dR$ is a {\rm viscosity subsolution} of \eqref{HJ11} if it is upper semicontinuous and if, for any smooth test function $\phi:\dR^{|S|}\to \dR$ such that $w-\phi$ has a local maximum on $\Delta(S)$ at a point $p\in \Delta(S)$, one has
$$
\min\left\{ rw(p)+H(p,D\phi(p))\ ; \ - \lambda_{\max}(p, D^2\phi(p))\right\}\leq 0 \;.
$$
A  map $w: \Delta(S)\to \dR$ is a {\rm viscosity supersolution} of \eqref{HJ11} if it is lower semicontinuous and if, for any smooth test function $\phi:\dR^{|S|}\to \dR$ such that $w-\phi$ has a local minimum on $\Delta(S)$ at a point $p\in  \Delta(S)$, one has
$$
\min\left\{ rw(p)+H(p,D\phi(p))\ ; \ - \lambda_{\max}(p, D^2\phi(p))\right\}\geq 0 \;.
$$
Finally,  $w: \Delta(S)\to \dR$ is a {\rm viscosity solution} of \eqref{HJ11} if it is a subsolution and a supersolution of \eqref{HJ11}.
\end{definition}

\begin{remark} {\rm
1) This definition does not exactly match  the standard notion of viscosity solution given, e.g., in  Crandall, Ishii and Lions (1992): the reason is that we work with functions defined on the simplex $\Delta(S)$, instead of the entire space. It is not even quite the same as in recent papers dealing with differential games with incomplete information, see e.g. Cardaliaguet and Rainer (2009a). In these papers,  no private information is ever disclosed after the initial time and the ``dynamics" on the parameter $p$ is simply the evolution of the belief of the non-informed player. As a consequence, a key property of these games is that the faces of the simplex are invariant under this dynamics: in terms of PDE, this is expressed by the fact that  the conditions for supersolution only need to be tested in the relative interior of $\Delta(S)$. In the present framework, the variable $p$ has a dynamics (the controlled Markov chain), which leaves the entire set $\Delta(S)$ invariant, but not the
  faces. As a consequence, the equations have to hold {\it up to the boundary}, as in the so-called state-constraint problems.

2) In the above definitions, one can always replace the assumption that  $w-\phi$ has a local maximum or minimum by the condition
 $w-\phi$ has a strict local maximum or minimum (see, e.g.,  Crandall et al. (1992)).
}
\end{remark}

Uniqueness for the solution of \eqref{HJ2} holds thanks to the following comparison principle, proved in the appendix. We assume that the Hamiltonian $H: \Delta(S)\times \dR^{|S|}\to \dR$ satisfies the condition:
\be\label{CondH1}
\left| H(p,q)-H(p,q')\right|�\leq C |q-q'| \qquad \forall p\in \Delta(S),\ q,q'\in \dR^{|S|}
\ee
as well as
\be\label{CondH2}
\left| H(p,q)-H(p',q)\right|�\leq C |p-p'|(|q|+1) \qquad \forall p,p'\in \Delta(S),\ q\in \dR^{|S|}
\ee

\begin{theorem}\label{theo:comparison} Under assumptions \eqref{CondH1} and \eqref{CondH2}, if $w_1$ is a continuous  viscosity subsolution of \eqref{HJ11} while $w_2$ is a continuous viscosity supersolution of \eqref{HJ11}, then $w_1\leq w_2$ in $\Delta(S)$.
\end{theorem}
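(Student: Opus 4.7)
The plan is to reduce the obstacle problem to a comparison for a pure first-order Hamilton-Jacobi equation by exploiting the concavity structure hidden in both inequalities. The naive Ishii doubling-of-variables scheme fails: if one penalizes $\Phi_\epsilon(p,q)=w_1(p)-w_2(q)-\frac{1}{2\epsilon}|p-q|^2$ and uses $\phi(p)=\frac{1}{2\epsilon}|p-q_\epsilon|^2$ as a test function for $w_1$ at the maximizer $p_\epsilon$, then $D^2\phi=\epsilon^{-1}I$, so $\lambda_{\max}(p_\epsilon,D^2\phi)=\epsilon^{-1}>0$ and the obstacle branch $-\lambda_{\max}\leq 0$ of the subsolution inequality is automatic --- yielding no information on $w_1$. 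One must therefore choose a different reduction.

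The supersolution inequality $-\lambda_{\max}(p,D^2\phi)\geq 0$ at local minima of $w_2-\phi$ is exactly the viscosity formulation of concavity of $w_2$ along the tangent space; combined with continuity up to the relative boundary, this gives that $w_2$ is concave on $\Delta(S)$. Let $\hat w_1$ denote the concave envelope of $w_1$ on $\Delta(S)$, so that $\hat w_1\geq w_1$, $\hat w_1$ is continuous and concave, and the putative inequality $\max(\hat w_1-w_2)\geq \max(w_1-w_2)>0$ is still in force. The central technical step is to show that $\hat w_1$ is a viscosity subsolution of the first-order Hamilton-Jacobi equation
\begin{equation*}
rw(p)+H(p,Dw(p))=0 \quad \text{in } \Delta(S).
\end{equation*}
Once this is established, $w_2$ is a state-constraint supersolution and $\hat w_1$ a subsolution of the same first-order equation on the compact set $\Delta(S)$, and a standard Ishii doubling argument (in the state-constraint form of Soner (1986) or Capuzzo-Dolcetta--Lions (1990)) applies under \eqref{CondH1}--\eqref{CondH2} and yields $\hat w_1\leq w_2$, whence $w_1\leq \hat w_1\leq w_2$.

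The main obstacle is the subsolution property of $\hat w_1$ itself --- this is where the argument is genuinely non-standard. At a point $p$ of the contact set $\{\hat w_1=w_1\}$, the superdifferential of the concave function $\hat w_1$ at $p$ only provides an affine superjet $(\xi,0)$ for $w_1$, and the obstacle inequality $\min\{rw_1(p)+H(p,\xi),0\}\leq 0$ is then trivially satisfied. To force the genuine first-order inequality $rw_1(p)+H(p,\xi)\leq 0$, one has to regularize $w_1$ in advance --- for instance by sup-convolution --- so that its superjets contain elements $(\xi,A)$ with $A$ strictly negative on the tangent space; for such a jet the obstacle branch is strictly ruled out and the first-order inequality must hold. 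At a non-contact point, $\hat w_1$ is affine on the relative interior of a simplex whose vertices lie in the contact set and share the common supergradient $D\hat w_1(p)$; one then transfers the inequality from the vertices to the interior point using \eqref{CondH2} together with a diameter bound on the affine pieces coming from the regularization, and passes to the limit in the regularization to conclude.
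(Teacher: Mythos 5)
There is a genuine gap: your central lemma --- that the concave envelope $\hat w_1$ of a subsolution of the obstacle problem \eqref{HJ11} is a viscosity subsolution of the first-order equation $rw+H(p,Dw)=0$ --- is false. A counterexample sits in Section 3 of the paper: take $u$ strictly convex and $w_1(p):=\sum_{s\in S}u(\delta_s)\int_0^\infty re^{-rt}p^*_t(s)dt$, which is affine in $p$, hence equal to its own concave envelope, and is a subsolution of \eqref{HJ11} (the branch $-\lambda_{\max}(p,D^2\phi)\leq 0$ is available since $w_1$ is affine); yet $rw_1(p)+H(p,Dw_1(p))=r\bigl(\sum_s p_s u(\delta_s)-u(p)\bigr)>0$ off the vertices. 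More generally the limit value $v$ itself is concave, so $\hat v=v$, while $rv+H(p,Dv)>0$ outside the non-revealing set $\mathcal H$; your scheme would force $rv+H\leq 0$ everywhere, which only happens when $\mathcal H=\Delta(S)$. The proposed repair does not close the hole either: sup-convolution produces a \emph{semiconvex} function, i.e.\ a lower bound $D^2w_1^\delta\geq -\delta^{-1}I$, and gives no superjets $(\xi,A)$ with $A$ strictly negative on the tangent space --- indeed for an affine $w_1$ every second-order superjet has $A\geq 0$ on $T_{\Delta(S)}(p)$, so the obstacle branch can never be ``strictly ruled out'' by regularizing $w_1$ alone.

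The missing idea, which is how the paper actually proceeds, is that the strict negativity of $D^2_{11}w_1^\delta$ needed to activate the first-order branch is \emph{not} an intrinsic property of (a regularization of) $w_1$; it is a property of the maximizer of the difference with $w_2$. One sup-convolves $w_1$, inf-convolves $w_2$ (so that $w_{2,\delta}$ inherits the concavity encoded in the supersolution branch $-\lambda_{\max}\geq 0$, which you correctly identified), adds a penalization $\sigma\xi$ with $D^2\xi>0$ so that the maximum of $w_1^\delta-w_{2,\delta}+\sigma\xi$ is strict, and invokes Jensen's Lemma to find a nearby point of twice differentiability. There the second-order optimality condition $D^2w_1^\delta\leq D^2w_{2,\delta}-\sigma D^2\xi$ combined with $\lambda_{\max}(\cdot,D^2w_{2,\delta})\leq 0$ yields $\lambda_{\max}(\cdot,D^2w_1^\delta)\leq -\sigma\lambda_{\min}(\cdot,D^2\xi)<0$, and only then does the subsolution inequality collapse to $rw_1^\delta+H(\cdot,Dw_1^\delta)\leq 0$, which can be subtracted from the supersolution inequality using \eqref{CondH1}--\eqref{CondH2}. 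Your diagnosis of why naive doubling fails, and your observation that the supersolution is concave, are both correct, but the reduction through the concave envelope cannot be made to work.
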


In particular, equation \eqref{HJ11} has at most one continuous viscosity solution.\\

\noindent {\bf Examples:}
We have already encountered several examples of subsolution and supersolution for the Hamilton-Jacobi equation \eqref{HJ1}: for instance, it can be checked that the maps
$\ds w(p):= \int_0^\infty re^{-rt}\mathrm{cav}\ u(p^*_t)dt$ in Lemma \ref{lemmupper} is a supersolution to \eqref{HJ1}, while  the maps $\ds w_1(p):= \int_0^\infty re^{-rt}u(p^*_t)dt$ and $\ds w_2(p):= \sum_{s\in S}u(\delta_s)\int_0^\infty re^{-rt}p^*_t(s)dt$  in Lemma \ref{lemmlower} are  subsolutions of \eqref{HJ1}. Hence, by \textbf{P2} and Theorem \ref{theo:comparison}, one has $v\leq w$ and $v\geq \max(w_1,w_2)$. This provides an alternative proof of Lemmas \ref{lemmupper} and \ref{lemmlower}.\\

\subsection{The convergence result}

In the endogenous case, it seems difficult to provide a characterization of $\lim \tilde v_n$ of the type of \textbf{P1} in  Theorem \ref{th1}. However,  characterization {\bf P2} still holds:

\begin{theorem}\label{mainth3} The uniform limit
$\ds \lim_{n\to +\infty} \tilde v_n(p)$ exists and is the unique viscosity solution of the Hamilton-Jacobi equation \eqref{HJ11}, where $H$ is now given by
\be\label{DefHbis}
H(p, \xi)= \min_{x\in \Delta(A)}\max_{y\in \Delta(B)} \left\{ -\lg ^T\!R(x,y)p,\xi\rg  - r g(p,x,y)\right\}\;.
\ee
\end{theorem}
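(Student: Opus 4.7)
The plan is to apply the half-relaxed limits method together with the comparison principle of Theorem~\ref{theo:comparison}. The family $(\tilde v_n)_n$ is uniformly bounded by $\|g\|_\infty$; I would also establish a uniform Lipschitz bound in $p$, either by a direct coupling argument (replacing $p$ by $p'$ perturbs the distribution of trajectories, hence the payoff stream, by $O(\|p-p'\|_1)$) or as a byproduct of the dynamic programming. Define the upper and lower relaxed limits
\begin{equation*}
v^*(p) = \limsup_{n\to\infty,\; p'\to p} \tilde v_n(p'), \qquad v_*(p) = \liminf_{n\to\infty,\; p'\to p} \tilde v_n(p'),
\end{equation*}
which satisfy $v_*\leq v^*$. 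The goal is to prove that $v^*$ is a viscosity subsolution and $v_*$ a viscosity supersolution of \eqref{HJ11}: comparison then forces $v^*\leq v_*$, whence $v^*=v_*=:v$ is the unique viscosity solution and $\tilde v_n\to v$ uniformly.

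Two tools drive the argument. First, each $\tilde v_n$ is concave on $\Delta(S)$. Indeed, the informed player can always, prior to the first stage, draw a message $k$ according to any splitting $\sum_k \alpha_k p_k = p$ and then play optimally in the subgame starting at the posterior $p_k$, so $\tilde v_n(p)\geq \sum_k \alpha_k \tilde v_n(p_k)$. This concavity is inherited by $v^*$ and $v_*$, and yields the second-order inequality $-\lambda_{\max}(p_0, D^2\phi(p_0))\geq 0$ at any interior test-point of a super- or subsolution test function. Second, $\tilde v_n$ satisfies a dynamic programming equation of the form
\begin{equation*}
\tilde v_n(p) = \max_{x\,:\,S\to\Delta(A)}\,\min_{y\in\Delta(B)}\,\sum_{a\in A}\prob_x(a)\Bigl\{\lambda_n g(p_a,a,y) + (1-\lambda_n)\,\tilde v_n\bigl({}^T\!P_{1/n}(a,y)p_a\bigr)\Bigr\},
\end{equation*}
where $(p_a,\prob_x(a))_{a\in A}$ is the splitting of $p$ induced by the mixed rule $x$.

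The core of the proof is the subsolution property of $v^*$. Take a smooth $\phi$ such that $v^*-\phi$ attains a strict local maximum at $p_0\in\Delta(S)$. By the standard half-relaxed limits lemma, there exist $n_k\uparrow\infty$ and $p_{n_k}\to p_0$ with $\tilde v_{n_k}(p_{n_k})\to v^*(p_0)$ such that $\tilde v_{n_k}-\phi$ attains a local maximum at $p_{n_k}$. If $-\lambda_{\max}(p_0,D^2\phi(p_0))\leq 0$ the subsolution inequality is immediate, so assume $\phi$ is strictly concave on $T_{\Delta(S)}(p_0)$. The local-max inequality $\tilde v_{n_k}(q)\leq \tilde v_{n_k}(p_{n_k}) + \phi(q)-\phi(p_{n_k})$ for $q$ near $p_{n_k}$, combined with the estimate $\sum_a \prob(a)\phi(p_a)\leq \phi(p_{n_k}) - c\sum_a \prob(a)|p_a-p_{n_k}|^2$ from strict concavity, shows that in the DP any nontrivial splitting is penalized quadratically by $\phi$; since the splitting can improve the DP value only by $O(\lambda_n)$, the $p_a$ must concentrate on $p_{n_k}$ at the relevant scale. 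Hence the DP collapses to leading order to the \emph{non-revealing} DP,
\begin{equation*}
\tilde v_{n_k}(p_{n_k}) \leq \max_{x\in\Delta(A)}\min_{y\in\Delta(B)}\Bigl\{\lambda_{n_k} g(p_{n_k},x,y) + (1-\lambda_{n_k})\,\tilde v_{n_k}\bigl({}^T\!P_{1/n_k}(x,y)p_{n_k}\bigr)\Bigr\} + o(\lambda_{n_k}).
\end{equation*}
Replacing $\tilde v_{n_k}$ by $\phi$ via the local-max inequality, Taylor expanding with $\lambda_{n_k}=r/n_k + O(n_k^{-2})$ and ${}^T\!P_{1/n_k}(x,y)p = p + n_k^{-1}\,{}^T\!R(x,y)p + O(n_k^{-2})$, dividing by $\lambda_{n_k}$ and letting $k\to\infty$ yields
\begin{equation*}
r v^*(p_0) \leq \max_{x}\min_{y}\bigl\{r g(p_0,x,y) + \langle {}^T\!R(x,y)p_0, D\phi(p_0)\rangle\bigr\} = -H(p_0,D\phi(p_0)),
\end{equation*}
by the Isaacs condition (which holds because the bracket is bilinear in $(x,y)$). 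This is the required first-order inequality.

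The supersolution property of $v_*$ is proved analogously but is easier: the concavity of $v_*$ already forces $-\lambda_{\max}(p_0,D^2\phi(p_0))\geq 0$ at any local minimum $p_0$ of $v_*-\phi$, and reversing the direction of the inequality in the DP (using that player~1 always has a non-revealing rule at his disposal, giving $\tilde v_n \geq $ non-revealing DP value) and Taylor expanding yields $r\phi(p_0) + H(p_0,D\phi(p_0))\geq 0$. Once both properties are in hand, the comparison principle of Theorem~\ref{theo:comparison}---whose hypotheses \eqref{CondH1}--\eqref{CondH2} are easy to verify for $H$ in \eqref{DefHbis}, since $H$ is linear in $\xi$ with bounded coefficients and smooth in $p$---delivers $v^*\leq v_*$, hence $v^*=v_*$ is the unique viscosity solution and $\tilde v_n\to v$ uniformly on $\Delta(S)$. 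The main technical obstacle is the subsolution step: one must quantify precisely how strict concavity of $\phi$ forces near-non-revealing behavior in the DP, while simultaneously controlling all Taylor remainders at the correct order $o(\lambda_n)$, uniformly in $n$.
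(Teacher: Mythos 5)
Your proposal follows essentially the same route as the paper: uniform bounds, concavity of $\tilde v_n$ via splitting, passage to the limit in the dynamic programming principle, concavity giving the second-order inequality, non-revealing strategies for the supersolution, strict concavity of the test function forcing asymptotically non-revealing optimal strategies for the subsolution, and closure by the comparison principle (the half-relaxed limits versus an Arzel\`a--Ascoli subsequence is a cosmetic difference given your Lipschitz bounds). The one step that does not quite go through as written is the estimate $\sum_a\prob_x(a)\phi(p_a)\leq\phi(p_{n_k})-c\sum_a\prob_x(a)|p_a-p_{n_k}|^2$: the test function $\phi$ is only strictly concave \emph{locally} on the tangent space, while the posteriors $p_a$ may lie anywhere in the face of $\Delta(S)$ containing $p_{n_k}$ (and the local-max inequality replacing $\tilde v_{n_k}$ by $\phi$ is likewise only local); the paper resolves exactly this with Lemma \ref{lem:tech}, which combines the \emph{global} concavity of the limit value with the local strict concavity of $\phi$ to obtain the quadratic decay for all $p$ with $p-\bar p\in T_{\Delta(S)}(\bar p)$, then first passes to the limit $n\to\infty$ to identify the limit of the optimizers as non-revealing, and only afterwards Taylor expands, using the concavity of $\tilde v_n$ to aggregate the posteriors inside $\tilde v_n$ via the barycenter property $\sum_a\prob_x(a)p_a=p_{n_k}$ --- which is what lets one avoid quantifying the rate of concentration (the natural scale is $\sqrt{\lambda_n}$, not $\lambda_n$, so concentration alone would not suffice).
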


Note that, when the transition are independent of actions, one recovers statement {\bf P2} of Theorem~\ref{th1} as a particular case. As the map $H$ defined by \eqref{DefHbis} satisfies conditions \eqref{CondH1} and \eqref{CondH2} above, Theorem \ref{theo:comparison} applies, and  equation  \eqref{HJ11} has at most a unique viscosity solution.

\subsection{Proof of Theorem \ref{mainth3}}

\subsubsection{Step 1: dynamic programming principle and regularity} As  is well-known, the maps $\tilde v_n$ are (uniformly) Lipschitz on $\Delta(S)$, concave and bounded, and satisfy the following dynamic programming principle:
\be\label{PPD1}
\begin{array}{l}
\ds \tilde v_n(p)
\; = \; \ds  \max_{x\in (\Delta(A))^S}\min_{y\in \Delta(B)} \left( \lambda_n g(p,x,y)+ (1-\lambda_n)\sum_{a\in A, \ b\in B} x(p)(a) y(b) \tilde v_n(^T\!P_{\frac{1}{n}}(a,b) \hat p(x,a)) \; \right) \\
\; = \; \ds \min_{y\in \Delta(B)}\max_{x\in (\Delta(A))^S} \left( \lambda_n g(p,x,y)+ (1-\lambda_n)\sum_{a\in A, \ b\in B} x(p)(a) y(b)\tilde v_n( ^T\!P_{\frac{1}{n}}(a,b)\hat p(x,a)) \; \right)
\end{array}
\ee
where
$\ds
x(p)(a)= \sum_{s\in S} p_s x_s(a)
$
and, for  $a \in A$, $\hat p(x, a):= \left( \frac{p_s x_s(a)}{x(p)(a)}\right)_{s\in S}$ is the conditional law of the state given $a$:

We will prove that any accumulation point of $(\tilde v_n)$ for the uniform convergence is a viscosity solution of \eqref{HJ11}. Since \eqref{HJ11} has a unique viscosity solution, this will imply the uniform convergence of the sequence $(\tilde v_n)$.

We thus consider a uniformly convergent subsequence of $(\tilde v_n)$. We denote by $w$ the continuous limit, and relabel the subsequence as $(\tilde v_n)$.

\subsubsection{Step 2: $w$ is a viscosity supersolution}
Let $\phi$ be a smooth test function such that $w-\phi$ has a strict local minimum on $\Delta(S)$ at some point $\bar p\in \Delta(S)$. This implies the existence of a sequence $(p_n)$ which converges to $\bar p$ and such that $\tilde v_n-\phi$ has a local minimum at $p_n$ for any $n$: namely,
\be\label{vnphi}
\tilde v_n(p)\geq \phi(p)-\phi(p_n)+\tilde v_n(p_n)\qquad\mbox{\rm for any $p\in \Delta(S)$.}
\ee
As $\tilde v_n$ is concave, the inequality $\ds \lambda_{\max}(p_n, D^2\phi(p_n))\leq 0 $
holds by definition of $T_{\Delta(S)}(\bar p)$. Since  $T_{\Delta(S)}(p_n)\supset T_{\Delta(S)}(\bar p)$ for $n$ large enough, letting $n\to +\infty $ yields
$\ds - \lambda_{\max}(\bar p, D^2\phi(\bar p))\geq 0$.

 Let $y_n\in \Delta(B)$ achieve the minimum in \eqref{PPD1} for $\tilde v_n(p_n)$, so that
\[
 \tilde v_n(p_n)= \max_{x\in (\Delta(A))^S} \left( \lambda_n g(p_n,x,y_n)+ (1-\lambda_n)\sum_{a\in A, \ b\in B} x(p_n)(a) y_n(b)\tilde v_n( ^T\!P_{\frac{1}{n}}(a,b)\hat p_n(x,a)) \; \right)
\]
Up to a subsequence, still denoted $(y_n)$, we may assume that $(y_n)$ converges to some $y\in \Delta(B)$.

Let now $x\in \Delta(S)$ be arbitrary. Applying the latter equation with the non-revealing strategy $(x,\ldots, x)\in \Delta(A)^S$, one has (with some abuse of notation) $x(p_n)=x$ and $\hat p_n(x,a)=p_n$, hence
$$
 \tilde v_n(p_n)\geq  \lambda_n g(p_n,x,y_n)+ (1-\lambda_n)\sum_{a\in A, \ b\in B} x(a) y_n(b)\tilde v_n( ^T\!P_{\frac{1}{n}}(a,b)p_n) \;.
$$
Using \eqref{vnphi}, this yields
\be\label{ini1}
\begin{array}{rl}
\tilde v_n(p_n)\; \geq & \ds  (1-\lambda_n)\left(-\phi(p_n)+\tilde v_n(p_n)\right) \\
& \ds +  \lambda_n g(p_n,x,y_n)+ (1-\lambda_n)\sum_{a\in A, \ b\in B} x(a) y_n(b)\phi( ^T\!P_{\frac{1}{n}}(a,b) p_n)
\end{array}
\ee
Since
$P_{\frac{1}{n}}(a,b) =I+\frac{1}{n} ^T\!R(a,b)+o(\frac{1}{n})$,
$$
\phi( ^T\!P_{\frac{1}{n}}(a,b) p_n) = \phi(p_n)+ \frac{1}{n}\lg ^T\!R(a,b)p_n, D\phi(p_n)\rg + o(\frac{1}{n})\;.
$$
Since $\ds \lambda_n= 1-e^{-\frac{r}{n}}= \frac{r}{n}+o(\frac{1}{n})$, the  inequality \eqref{ini1} can then be rewritten
$$
0\; \geq \;-r \tilde v_n(p_n)+ r g(p_n,x,y_n)+ \sum_{a\in A, \ b\in B} x(a) y_n(b)\lg ^T\!R(a,b)p_n, D\phi(p_n)\rg + o(1)\;.
$$
Letting $n\to +\infty$ then yields
$
0\; \geq \;-r w(\bar p)+  rg(\bar p,x,y)+ \lg ^T\!R(x,y)\bar p, D\phi(\bar p)\rg.
$
Taking the infimum over $x\in \Delta(A)$ gives:
$$
r w(\bar p)+ \max_{y\in \Delta(B)}\min_{x\in \Delta(A)}\left\{ -  rg(\bar p,x,y)- \lg ^T\!R(x,y)\bar p, D\phi(\bar p)\rg \right\} \geq 0\;.
$$
In conclusion we have proved that
$$
\min\left\{ rw(\bar p)+H(\bar p,D\phi(\bar p))\ ; \ - \lambda_{\max}(\bar p, D^2\phi(\bar p))\right\}\geq 0 \;,
$$
as desired.

\subsubsection{Step 3: $w$ is a viscosity subsolution}

We will use the following technical remark, which follows from Cardaliaguet and Rainer (2009a) or from Step 1 in the proof of Proposition 4.4 in Gr\"{u}n  (2012):

\begin{lemma}\label{lem:tech} Let $w:\Delta(S)\to \dR$ be a concave function and $\phi$ be a smooth test function such that $w-\phi$ has a local maximum on $\Delta(S)$ at a point  $\bar p\in \Delta(S)$. If $\ds \lambda_{\max}(\bar p, D^2\phi(\bar p))<0$, then there is $\delta>0$ such that, for any $p\in \Delta(S)$ such that $p-\bar p\in T_{\Delta(S)}(\bar p)$,
\be\label{jhbqhfcb}
w(p)\leq w(\bar p)+ \lg D\phi(\bar p),p-\bar p\rg -\delta |p-\bar p|^2 .
\ee
\end{lemma}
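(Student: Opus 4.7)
The plan is to first establish the inequality in a neighborhood of $\bar p$ via a second-order Taylor expansion of $\phi$, and then extend it globally over the slice $\{p\in\Delta(S):\,p-\bar p\in T_{\Delta(S)}(\bar p)\}$ using the concavity of $w$.

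For the local step, the local maximum property gives a neighborhood $U$ of $\bar p$ in $\Delta(S)$ on which $w(p)-w(\bar p)\le \phi(p)-\phi(\bar p)$. Set $\alpha:=-\tfrac12\lambda_{\max}(\bar p,D^2\phi(\bar p))>0$. By the very definition of $\lambda_{\max}$, every $z\in T_{\Delta(S)}(\bar p)$ satisfies $\tfrac12\langle D^2\phi(\bar p)z,z\rangle\le -\alpha|z|^2$, so a second-order Taylor expansion of $\phi$ at $\bar p$ (together with the fact that the remainder is $o(|p-\bar p|^2)$) produces a radius $r>0$ such that
\[w(p)\le w(\bar p)+\langle D\phi(\bar p),p-\bar p\rangle-\tfrac{\alpha}{2}|p-\bar p|^2\]
whenever $p\in\Delta(S)$, $p-\bar p\in T_{\Delta(S)}(\bar p)$ and $|p-\bar p|\le r$.

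For the global step, let $p\in\Delta(S)$ with $p-\bar p\in T_{\Delta(S)}(\bar p)$ and $|p-\bar p|>r$, and set $p_r:=(1-\tfrac{r}{|p-\bar p|})\bar p+\tfrac{r}{|p-\bar p|}p$. Then $p_r\in\Delta(S)$ as a convex combination of $\bar p$ and $p$, $p_r-\bar p\in T_{\Delta(S)}(\bar p)$ as a scalar multiple of $p-\bar p$, and $|p_r-\bar p|=r$, so the local bound applies at $p_r$. Combining it with the concavity inequality $w(p_r)\ge (1-\tfrac{r}{|p-\bar p|})w(\bar p)+\tfrac{r}{|p-\bar p|}w(p)$ and simplifying yields
\[w(p)-w(\bar p)\le \langle D\phi(\bar p),p-\bar p\rangle-\tfrac{\alpha r}{2}|p-\bar p|.\]
Since $|p-\bar p|\le D$, where $D$ is the diameter of $\Delta(S)$, one has $|p-\bar p|\ge |p-\bar p|^2/D$, which converts the linear penalty back into a quadratic one. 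Taking $\delta:=\min\bigl(\alpha/2,\,\alpha r/(2D)\bigr)$ then delivers \eqref{jhbqhfcb} on the whole of the required set.

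The delicate point is the global extension: concavity alone propagates the inequality inward along segments, but when used to move outward it degrades the quadratic penalty into a linear one. Recovering a (possibly weaker) quadratic control valid everywhere on the face hinges on the boundedness of $\Delta(S)$. The secondary verifications, that $p_r$ remains in $\Delta(S)$ and that $p_r-\bar p$ lies in the same tangent space as $p-\bar p$, are immediate from the construction.
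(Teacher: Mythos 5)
Your proof is correct: the local quadratic bound from the second-order Taylor expansion of $\phi$ on $T_{\Delta(S)}(\bar p)$, followed by the radial globalization via concavity of $w$ and the conversion of the resulting linear penalty back into a quadratic one using the boundedness of $\Delta(S)$, is exactly the standard argument. The paper itself does not prove this lemma but only refers to Cardaliaguet and Rainer (2009a) and to Step 1 of Proposition 4.4 in Gr\"un (2012), where essentially the same reasoning is carried out, so your write-up fills that gap faithfully.
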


Let $\phi$ be a smooth test function such that $w-\phi$ has a strict local maximum on $\Delta(S)$ at some point $\bar p\in \Delta(S)$. If $\ds \lambda_{\max}(\bar p, D^2\phi(\bar p))\geq 0$, then the desired inequality holds. So we may assume that $\lambda_{\max}(\bar p, D^2\phi(\bar p))<0$ and it remains to check that, in this case,
$\ds rw(\bar p)+H(\bar p,D\phi(\bar p)) \leq 0\;.$

As before there are $p_n\in \Delta(S)$ which converge to $\bar p$ and at which $\tilde v_n-\phi$ has a local maximum.
Let now $x_n=(x_{n,s})$ achieve the maximum in  \eqref{PPD1} for $\tilde v_n(p_n)$.
Given an arbitrary $y\in \Delta(B)$,  one thus has
\be\label{startingpoint}
\tilde v_n(p_n) \; \leq \;  \ds  \lambda_n g(p_n,x_n,y)+ (1-\lambda_n)\sum_{a\in A, \ b\in B} x_n(p_n)(a) y(b) \tilde v_n(^T\!P_{\frac{1}{n}}(a,b) \hat p_n(x_n,a)).
\ee
Since  $\ds \lambda_n= o(1)$, since  $^T\!P_{\frac{1}{n}}(a,b) \hat p_n(x_n,a)= \hat p_n(x_n,a)+o(1)$, and using the uniform continuity of $\tilde v_n$, this implies
$$
o(1) \; \leq  \;  \ds \sum_{a\in A, \ b\in B} x_n(p_n)(a) y(b) \left(\tilde v_n( \hat p_n(x_n,a)) -\tilde v_n(p_n)\right)
$$
Let $x=(x_s)_{s\in S}$ be the limit of (a subsequence of)  $(x_{n})_n$. Letting  $n\to+\infty$ in the above inequality we get
\be\label{sumwhat-w}
\begin{array}{rl}
0 \; \leq &  \ds \sum_{a\in A, \ b\in B} x(\bar p)(a) y(b) \left(w( \hat p(x,a)) -w(\bar p)\right) \\
=&  \ds \sum_{a\in A} x(\bar p)(a)  \left(w( \hat p(x,a)) -w(\bar p)\right)
\end{array}
\ee
As $\ds  \sum_{a\in A} x(\bar p)(a)  \hat p(x,a)= \bar p,$ all the points\footnote{such that $x(\bar p)(a)>0$} $\hat p(x,a)$ belong to the same face of $\Delta(S)$ as $\bar p$. Hence $\hat p(x,a)-\bar p\in T_{\Delta(S)}(\bar p)$ for any $a\in A$.  Since  $w-\phi$ has a local maximum on $\Delta(S)$ at $\bar p\in \Delta(S)$ with $\lambda_{\max}(\bar p, D^2\phi(\bar p))<0$, Lemma \ref{lem:tech}  states that there is $\delta>0$ such that, for all $p\in \Delta(S)$ with $p-\bar p\in T_{\Delta(S)}(\bar p)$,
\be\label{vnConv1}
w(p)\leq w(\bar p)+\lg D\phi(\bar p), p-\bar p\rg -\delta | p-\bar p|^2.
\ee
Plugging \eqref{vnConv1} into \eqref{sumwhat-w} gives
$$
\begin{array}{rl}
0 \; \leq  & \ds   \ds \sum_{a\in A} x(\bar p)(a)  \left(\lg D\phi(\bar p),  \hat p_n(x_n,a)-\bar p\rg -\delta |  \hat p_n(x_n,a)-\bar p|^2 \right) \\
& \\
= & \ds -\delta  \sum_{a\in A} x(\bar p)(a) |  \hat p_n(x_n,a)-\bar p|^2,
\end{array}
$$
because $\ds  \sum_{a\in A} x(\bar p)(a)  \hat p_n(x_n,a)= \bar p$.
In particular, $\hat p(x,a)=\bar p$  if $x(\bar p)(a) >0$.
By definition of $x(\bar p)(a)$, we have therefore  $x_s=x_{s'}$ for any $s\neq s'$ such that $\bar p_s>0$ and $\bar p_{s'}>0$ (which means that $x$ is non revealing). We denote by $x\in \Delta(A)$ this common value and note that $\hat p(x,a)=\bar p$ whenever $x(a)>0$.

We now come back to \eqref{startingpoint} and use the concavity of $\tilde v_n$ to deduce that
$$
\tilde v_n(p_n) \; \leq \;  \ds  \lambda_n g(p_n,x_n,y)+ (1-\lambda_n)\tilde v_n\left(\sum_{a\in A, \ b\in B} x_n(p_n)(a) y(b) ^T\!P_{\frac{1}{n}}(a,b) \hat p_n(x_n,a)\right)
$$
Since  $\tilde v_n-\phi$ has a  strict  local maximum at $p_n\in \Delta(S)$, we get
\begin{multline}\label{uzbechj}
0 \; \leq \;  \ds  \lambda_n \left(g(p_n,x_n,y)- \tilde v_n(p_n)\right)\\ + (1-\lambda_n)\left(\phi \left(\sum_{a\in A, \ b\in B} x_n(p_n)(a) y(b) ^T\!P_{\frac{1}{n}}(a,b) \hat p_n(x_n,a)\right) -\phi(p_n)\right)
\end{multline}
Observe next that
$$
\begin{array}{rl}
\ds \sum_{a\in A, \ b\in B} x_n(p_n)(a) y(b) ^T\!P_{\frac{1}{n}}(a,b) \hat p_n(x_n,a) \; = & \ds p_n + \frac{1}{n}
\sum_{a\in A, \ b\in B} x_n(p_n)(a) y(b) ^T\!R(a,b) \hat p_n(x_n,a)+o(\frac{1}{n})\\
= & \ds p_n + \frac{1}{n}
\sum_{a\in A, \ b\in B} x_n(p_n)(a) y(b) ^T\!R(a,b) \bar p + o(\frac1n)
\end{array}
$$
where the second equality holds because $\hat p_n(x_n,a)\to \bar p$. Plugging into in \eqref{uzbechj} we obtain
$$
0 \; \leq \;  \ds  \lambda_n \left(g(p_n,x_n,y)- \tilde v_n(p_n)\right)+ \frac{(1-\lambda_n)}{n}\big\lg D\phi(p_n), \sum_{a\in A, \ b\in B} x_n(p_n)(a) y(b) ^T\!R(a,b) \bar p\big\rg +o(\frac1n)
$$
Since $x_n(p_n)(a)\to x(a)$,   multiplying by $n$ and letting $n\to+\infty$ yields
$$
0 \; \leq \;  \ds  r \left(g(\bar p,x,y)- w(\bar p)\right)+ \big\lg D\phi(\bar p), \sum_{a\in A, \ b\in B} x(a) y(b) ^T\!R(a,b) \bar p\big\rg
$$
When rearranging, we find that
$$
rw(\bar p)+\min_{x\in \Delta(A)}\max_{y\in \Delta(B)}\left( -rg(\bar p,x,y) -  \lg D\phi(\bar p),  ^T\!R(x,y) \bar  p\rg\right)\leq 0\;.
$$
Therefore $w$ is a subsolution.

\section{Incomplete information on both sides}\label{sec:bothsides}

\subsection{Model}

The approach developed in the previous section can also be adapted to games with lack of information on both sides, in which each player observes and controls a Markov chain. The framework is  close to the one of Gensbittel and Renault (2012). In particular, we also  assume that each player  observes only one Markov chain. However, the fact that players play more and more often completely changes the nature of the results.

We  assume that there are two controlled Markov chains $(s^1_t)$ and $(s^2_t)$ with values in the finite sets  $S^1$ and $S^2$ respectively. The process $(s^i_t)$ is observed and controlled by  Player $i=1,2$. That is, the generator of $(s^1_t)_{t\geq 0}$ is of  the form  $(R^1(s,s';a), s,s'\in S^1,a\in A)$, and that of $(s^2_t)$ is   $(R^2(s,s';b), s,s'\in S^1,b\in B)$. The assumptions on $R^1$ and $R^2$ are the same as in the previous section.

Much as before, for given $a$, we  denote by $P^1_t(a)$ the transition function of a Markov chain with transition rates $R^1(\cdot;a)$,
and set $P^1_t(x):=\sum_{a\in A}x(a)P^1_t(a)$ whenever $x\in \Delta(A)$. The transition function $P^2_t(y)$ is defined similarly for $y\in \Delta(B)$.

In this new game, the payoff function depends on both states and actions: $g:S^1\times S^2\times A\times B\to \dR$. The initial positions $s^1_0$ and $s^2_0$ of the chains are chosen independently with laws $p^1\in \Delta(S)$ and $p^2\in \Delta(S^2)$.  As before,
the weight of stage $k$ in $G_n(p)$ is $\lambda_n(1-\lambda_n)^k$, with $\lambda_n:=\displaystyle 1-e^{-r/n}$.  The value of the game with initial distribution $(p^1,p^2)$  is denoted by  $\tilde v_n(p^1,p^2)$.

\subsection{The Hamilton-Jacobi equation}

In this setting, we have to introduce a slightly new type of Hamilton-Jacobi equation. The Hamiltonian  is now a map $H:\Delta(S^1)\times\Delta(S^2)\times \dR^{|S^1|+|S^2|}\to \dR$ and the Hamilton-Jacobi equation is given by the pair of inequalities
\be\label{HJ2}
\begin{array}{r}
\ds \max\left\{\min\left\{ rw+H(p^1,p^2,Dw) ;  -\lambda_{\max}(p^1, D_{11}^2w)\right\} ;   -\lambda_{\min}(p^2, D_{22}^2w)\right\}\leq0\qquad \\ \ds  {\rm in }\; \Delta(S^1)\times \Delta(S^2)\;,\\
\ds \min\left\{ \max\left\{ rw+H(p^1,p^2,Dw)  ;   -\lambda_{\min}(p^2, D_{22}^2w)\right\};  -\lambda_{\max}(p^1, D_{11}^2w)\right\}\geq 0\qquad \\ \ds  {\rm in }\; \Delta(S^1)\times \Delta(S^2)\;.
\end{array}
\ee
In the above expressions, $r>0$ is the discount rate, $w:\Delta(S^1)\times \Delta(S^2)\to \dR$ is the unknown (formally extended to a neighborhood of $\Delta(S^1)\times \Delta(S^2)$),   $Dw=(D_1w,D_2w)$ is the full gradient of $w$ with respect to $(p^1,p^2)$, $D_1w$ (resp. $D_2w$) being the derivative with respect to $p_1$ (resp.  $p_2$),  $D^2_{11}w$ (resp. $D^2_{22}w$) is the second order derivative of $w$ with respect to $p^1$ (resp. $p^2$), $\lambda_{\max}(p^1, X)$ is defined by \eqref{deflambdamax} while
\be\label{deflambdamin}
\lambda_{\min}(p^2, X) = \inf\left\{ \frac{\lg X z,z\rg}{|z|^2}, \; z\in T_{\Delta(S^2)}(p^2)\backslash\{0\}\right\}
\ee
where $T_{\Delta(S^2)}(p^2)$ is the tangent space of $\Delta(S^2)$ at $p^2$. By convention we set $\lambda_{\min}(p^2, X)=+\infty$ if $T_{\Delta(S^2)}(p^2)=\{0\}$.

As before, one cannot expect equation \eqref{HJ2} to have a smooth solution in general, and we use instead the following notion of viscosity solution:

\begin{definition} A map $w: \Delta(S^1)\times \Delta(S^2)\to \dR$ is a {\rm viscosity subsolution} of \eqref{HJ2} if it is upper semicontinuous and if, for any smooth test function $\phi:\dR^{|S^1|+|S^2|}\to \dR$ such that $w-\phi$ has a local maximum on $\Delta(S^1)\times\Delta(S^2)$ at  $(p^1,p^2)\in \Delta(S^1)\times \Delta(S^2)$, one has
$$
\begin{array}{r}
\ds \max\left\{ \min\left\{ rw(p^1,p^2)+H(p^1,p^2,D\phi(p^1,p^2))\ ;\  - \lambda_{\max}(p^1, D^2_{11}\phi(p^1,p^2))\right\}\ ;\right.\qquad \\
\ds \left.- \lambda_{\min}(p^2, D^2_{22}\phi(p^1,p^2))\right\} \leq 0 \;.
\end{array}
$$
A  map $w: \Delta(S^1)\times \Delta(S^2)\to \dR$ is a {\rm viscosity supersolution} of \eqref{HJ2} if it is lower semicontinuous and if, for any smooth test function $\phi:\dR^{|S^1|+|S^2|}\to \dR$ such that $w-\phi$ has a local minimum on $\Delta(S^1)\times \Delta(S^2)$ at  $(p^1,p^2)\in  \Delta(S^1)\times \Delta(S^2)$, one has
$$
\begin{array}{r}
\ds \min\left\{\max \left\{ rw(p^1,p^2)+H(p^1,p^2,D\phi(p^1,p^2))\ ;\ - \lambda_{\min}(p^2, D^2_{22}\phi(p^1,p^2))\right\} \ ;\right.\qquad \\
\ds  \left. - \lambda_{\max}(p^1, D^2_{11}\phi(p^1,p^2))\right\}\geq 0 \;.
\end{array}
$$
Finally,  $w: \Delta(S^1)\times \Delta(S^2)\to \dR$ is a {\rm viscosity solution} of \eqref{HJ2} if it is a sub- and a supersolution of \eqref{HJ2}.
\end{definition}

Uniqueness of a viscosity solution for \eqref{HJ2} holds thanks to a comparison principle, which generalizes Theorem \ref{theo:comparison}. We will assume that $H: \Delta(S^1)\times \Delta(S^2)\times \dR^{|S^1|+|S^2|}\to \dR$ satisfies the condition:
\be\label{CondH11}
\left| H(p,q)-H(p,q')\right|�\leq C |q-q'| \qquad \forall p\in \Delta(S^1)\times \Delta(S^2),\ q,q'\in \dR^{|S^1|+|S^2|}
\ee
as well as
\be\label{CondH21}
\left| H(p,q)-H(p',q)\right|�\leq C |p-p'|(|q|+1) \qquad \forall p,p'\in \Delta(S^1)\times \Delta(S^2),\ q\in \dR^{|S^1|+|S^2|}
\ee

\begin{theorem}\label{theo:comparison2} Assume that \eqref{CondH11} and \eqref{CondH21} hold. Let $w_1$ be a  continuous viscosity subsolution of \eqref{HJ2} and $w_2$ be a continuous viscosity supersolution of \eqref{HJ2}. Then $w_1\leq w_2$ in $\Delta(S^1)\times \Delta(S^2)$.
\end{theorem}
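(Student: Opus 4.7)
Prove Theorem \ref{theo:comparison2} by a doubling of variables argument, arguing by contradiction and following the scheme developed in Cardaliaguet--Rainer (2009a) and in the proof of Theorem \ref{theo:comparison} (appendix), suitably extended to the two-sided obstacle structure of \eqref{HJ2}. Assume $M := \max_{\Delta(S^1) \times \Delta(S^2)}(w_1 - w_2) > 0$, attained at some $(\hat p^1, \hat p^2)$ by semicontinuity on a compact set.

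The core difficulty is that a naive quadratic doubling $|p-q|^2/(2\epsilon)$ produces test functions whose Hessians are positive-definite in both the $p^1$- and $p^2$-directions; this automatically verifies the obstacle alternatives $-\lambda_{\max}(p^1, \cdot)$ in the sub and $-\lambda_{\min}(p^2, \cdot)$ in the super inequality, so the viscosity conditions become vacuous and yield no PDE information. To extract content, one must use a doubling whose test-function Hessian has the correct signs (concave in $p^1$, convex in $p^2$), and this is made possible by the implicit regularity of the solutions themselves: any subsolution of \eqref{HJ2} is convex in $p^2$ (from $-\lambda_{\min}(p^2, D^2_{22}w_1)\leq 0$) and any supersolution is concave in $p^1$ (from $-\lambda_{\max}(p^1, D^2_{11}w_2)\geq 0$). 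Following the scheme used for differential games with information on both sides (Cardaliaguet 2009), I would replace the symmetric quadratic penalty by one of mixed form such as
$$
\Psi_\epsilon(p,q) := w_1(p^1,p^2) - w_2(q^1,q^2) - \frac{|p^2 - q^2|^2}{2\epsilon} + \frac{\alpha|p^1 - q^1|^2}{2},
$$
with appropriate localization, whose Hessian is $+I/\epsilon$ in the $p^2$-variable (convex, activating the $p^2$ obstacle alternative of the sub) and $-\alpha I$ in the $p^1$-variable (concave, activating the $p^1$ obstacle alternative of the super). The convexity of $w_1$ in $p^2$ and the concavity of $w_2$ in $p^1$ absorb the otherwise wrong-signed term, allowing the maximizer $(\bar p_\epsilon, \bar q_\epsilon)$ to satisfy $|\bar p_\epsilon - \bar q_\epsilon| \to 0$ in \emph{both} variables as $\epsilon \to 0$, with the standard estimate $|\bar p^2_\epsilon - \bar q^2_\epsilon|^2/\epsilon \to 0$.

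With the appropriate doubling in hand, the sub and super inequalities at the doubled maximizers reduce to their PDE parts, $r w_1(\bar p_\epsilon) + H(\bar p_\epsilon, \xi_\epsilon) \leq 0$ and $r w_2(\bar q_\epsilon) + H(\bar q_\epsilon, \xi_\epsilon) \geq 0$, where $\xi_\epsilon$ is the (common) gradient of the penalty. Subtracting and using \eqref{CondH11}--\eqref{CondH21} to bound $|H(\bar p_\epsilon, \xi_\epsilon) - H(\bar q_\epsilon, \xi_\epsilon)| \leq C|\bar p_\epsilon - \bar q_\epsilon|(|\xi_\epsilon| + 1)$, together with $|\bar p_\epsilon - \bar q_\epsilon||\xi_\epsilon| \to 0$, yields $rM \leq 0$ in the limit $\epsilon\to 0$, contradicting $M > 0$.

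The main obstacle is the construction of the doubling penalty: one needs a test function with correct Hessian signs in each of $p^1$ and $p^2$, while still forcing the two coordinates of the maximizer to coalesce as $\epsilon \to 0$. This requires a nontrivial use of the convex/concave regularity of $w_1, w_2$ themselves, not merely of the test function, and likely an auxiliary regularization (e.g.\ partial sup/inf-convolution in the respective variables) to make the argument rigorous. A secondary but significant complication is the state-constraint aspect of \eqref{HJ2}: inequalities must be satisfied up to the boundary of $\Delta(S^1)\times\Delta(S^2)$, where the tangent spaces $T_{\Delta(S^i)}$ degenerate and the conventions $\lambda_{\max} = -\infty$, $\lambda_{\min} = +\infty$ at extreme points must be handled with care. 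This is precisely the feature that distinguishes the present comparison principle from the one in Cardaliaguet--Rainer (2009a) for games where private information is disclosed only at the initial time and comparison is required only in the relative interior.
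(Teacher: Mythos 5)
Your overall strategy (contradiction, use of the Lipschitz conditions \eqref{CondH11}--\eqref{CondH21} to subtract the two Hamiltonian inequalities and get $rM\le 0$, and the recognition that the solutions' own one-sided curvature --- $w_1$ convex in $p^2$, $w_2$ concave in $p^1$ --- must be exploited to make the obstacle terms non-vacuous) is the right one, and you correctly identify the construction of the penalty as the crux. But the penalty you propose does not work, and the gap is not merely technical. First, the term $+\frac{\alpha}{2}|p^1-q^1|^2$ rewards separation of the first coordinates rather than forcing them to coalesce; since $-w_2(\cdot,q^2)$ is convex in $q^1$, the map $q^1\mapsto \Psi_\epsilon$ is strictly convex and its maximum over $\Delta(S^1)$ sits at an extreme point, so there is no reason for $|\bar p^1_\epsilon-\bar q^1_\epsilon|\to 0$. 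Second, and more fundamentally, the test function seen by the supersolution is $\psi(q)=-\frac{\alpha}{2}|\bar p^1-q^1|^2+\frac{1}{2\epsilon}|\bar p^2-q^2|^2$ up to sign conventions, i.e.\ $D^2_{22}\psi=-I/\epsilon$, so that $-\lambda_{\min}(\bar q^2,D^2_{22}\psi)=1/\epsilon>0$ and the inner $\max$ in the supersolution inequality is trivially positive: you extract no PDE information from $w_2$, while $-\lambda_{\max}(\bar q^1,D^2_{11}\psi)=-\alpha<0$ forces $\bar q^1$ to be an extreme point. This failure is structural: any doubling penalty of difference form $f(p^1-q^1)+g(p^2-q^2)$ has identical second derivatives in $p^i$ and $q^i$, whereas activating the PDE part in both the sub- and the supersolution inequality requires opposite curvature signs of the respective test functions in each group of variables.

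The paper circumvents this by abandoning doubling of variables altogether. It regularizes $w_1$ by sup-convolution and $w_2$ by inf-convolution (these convolutions play the role your doubling was meant to play, and you do gesture at this), adds a \emph{single} strictly convex penalization $\sigma(\xi_1(p^1)+\xi_2(p^2))$ to $w_1^\delta-w_{2,\delta}$, and uses Jensen's Lemma to find a nearby maximum point where both regularized functions have second-order expansions. At such a point one has the matrix inequality $D^2w_1^\delta\le D^2w_{2,\delta}-\sigma D^2\xi$; combining this with $\lambda_{\max}(\cdot,D^2_{11}w_{2,\delta})\le 0$ (concavity of $w_2$ in $p^1$) gives $\lambda_{\max}(\cdot,D^2_{11}w_1^\delta)\le-\sigma\lambda_{\min}(D^2\xi_1)<0$, and symmetrically $\lambda_{\min}(\cdot,D^2_{22}w_{2,\delta})>0$ from the convexity of $w_1$ in $p^2$. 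It is this transfer of curvature \emph{between the two solutions through the common maximum point} --- not a sign pattern built into the penalty --- that reduces both viscosity inequalities to their PDE parts; the conclusion then proceeds as you describe. As written, your proposal leaves exactly this step unproven, and the specific penalty you suggest cannot supply it.
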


\subsection{The limit theorem}

Here is our main result in the framework of lack of information on both sides.
\begin{theorem}\label{mainthboth} The uniform limit
$\ds v= \lim_{n\to +\infty} \tilde v_n$ exists and is the unique viscosity solution of Hamilton-Jacobi equation \eqref{HJ2}  where  $H$ is given by
\be\label{DefH}
\begin{array}{rl}
\ds H(p^1,p^2, \xi^1,\xi^2)\; = & \ds \min_{x\in \Delta(A)}\max_{y\in \Delta(B)} \left\{ -\lg ^T\!R^1(x)p^1,\xi^1\rg -\lg ^T\!R^2(y)p^2,\xi^2\rg  - r g(p^1,p^2,x,y)\right\}\\
=& \ds \max_{y\in \Delta(B)}\min_{x\in \Delta(A)} \left\{ -\lg ^T\!R^1(x)p^1,\xi^1\rg -\lg ^T\!R^2(y)p^2,\xi^2\rg  - r g(p^1,p^2,x,y)\right\}
\end{array}
\ee
for any $(p^1,p^2, \xi^1,\xi^2)\in \Delta(S^1)\times \Delta(S^2)\times \dR^{|S^1|}\times \dR^{|S^2|}$.
\end{theorem}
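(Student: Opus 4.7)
The plan is to mirror closely the proof of Theorem \ref{mainth3}: establish a two-sided dynamic programming principle for $\tilde v_n$, extract regularity, pass to a uniformly convergent subsequence, verify that any accumulation point $w$ is a viscosity solution of \eqref{HJ2}, and appeal to the comparison principle (Theorem \ref{theo:comparison2}) to pin down the full limit. The main new feature is that $\tilde v_n$ is now both concave in $p^1$ and convex in $p^2$: each player can independently split his own marginal by a state-dependent action plan (Player~1 picks $x\in(\Delta(A))^{S^1}$, Player~2 picks $y\in(\Delta(B))^{S^2}$), and $\tilde v_n(p^1,p^2)$ satisfies the DPP
\[
\tilde v_n(p^1,p^2)=\mbox{val}\left(\lambda_n g(p^1,p^2,x,y)+(1-\lambda_n)\sum_{a,b}x(p^1)(a)y(p^2)(b)\tilde v_n\bigl({}^T\!P^1_{1/n}(a)\hat p^1(x,a),\,{}^T\!P^2_{1/n}(b)\hat p^2(y,b)\bigr)\right),
\]
where $\hat p^i$ denote the Bayesian posteriors; the value ``$\mbox{val}$'' exists because, after splittings are chosen, the stage reduces to a finite matrix game. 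A uniform Lipschitz estimate on $\tilde v_n$ follows from bounded payoffs and exponential discounting, exactly as in Section \ref{sec:nonendo}.

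Fix a uniformly convergent subsequence with limit $w$. A pleasant feature of the formulation \eqref{HJ2} is that two of its branches are automatic from the separate concavity/convexity of $w$: at a local minimum of $w-\phi$, concavity of $w$ in $p^1$ forces $D^2_{11}\phi\leq 0$ on $T_{\Delta(S^1)}(\bar p^1)$, hence $-\lambda_{\max}(\bar p^1,D^2_{11}\phi)\geq 0$; symmetrically, at a local maximum, convexity of $w$ in $p^2$ yields $-\lambda_{\min}(\bar p^2,D^2_{22}\phi)\leq 0$. The burden thus reduces to the Hamiltonian inequalities $\max\{rw+H,-\lambda_{\min}(\bar p^2,D^2_{22}\phi)\}\geq 0$ for supersolutions, and $\min\{rw+H,-\lambda_{\max}(\bar p^1,D^2_{11}\phi)\}\leq 0$ for subsolutions.

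These are obtained by adapting Steps~2 and~3 of Section \ref{sec:nonendo}. For the supersolution, I would restrict Player~1 to an arbitrary non-revealing $x\in\Delta(A)$ (so that $\hat p^1_n(x,a)=p^1_n$) while letting Player~2 play an optimal $y_n\to y$ at a local minimum $p_n\to\bar p$ of $\tilde v_n-\phi$. Expanding ${}^T\!P^i_{1/n}=I+\frac1n{}^T\!R^i+o(\tfrac1n)$ and $\lambda_n=\frac{r}{n}+o(\tfrac1n)$ in the DPP and multiplying by $n$ produces, after letting $n\to+\infty$,
\[
rw(\bar p)-rg(\bar p,x,y)-\lg{}^T\!R^1(x)\bar p^1,D_1\phi(\bar p)\rg-\lg{}^T\!R^2(y)\bar p^2,D_2\phi(\bar p)\rg\geq 0,
\]
provided the limiting $y$ is non-revealing. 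When it is not, a two-sided analogue of Lemma \ref{lem:tech} applied in the $p^2$-tangent directions, together with the splitting of Player~2's posterior, forces either $y$ to be non-revealing after all, or $-\lambda_{\min}(\bar p^2,D^2_{22}\phi)\geq 0$, so that one branch of the max holds. Taking the infimum over $x\in\Delta(A)$ and the supremum over $y\in\Delta(B)$ then yields $rw(\bar p)+H(\bar p,D\phi(\bar p))\geq 0$ via the min-max identity in \eqref{DefH}. The subsolution step is entirely symmetric, exchanging the roles of the two players.

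The main obstacle I anticipate is the case analysis in which Player~2 (for the supersolution) or Player~1 (for the subsolution) genuinely splits his belief: one must formulate a two-sided refinement of Lemma \ref{lem:tech} that uses only the $p^2$-marginal convexity (respectively $p^1$-marginal concavity) of $w$ together with the strict sign of the appropriate partial eigenvalue of $D^2\phi$, in order to preclude a revealing optimum. Once this refined lemma is available, the Taylor expansions along the two splittings decouple---thanks to the product form of the joint posterior distribution of $(a,b)$, which reflects the independence of the two Markov chains and of the two randomizations---and all remaining passages to the limit proceed as in Section \ref{sec:nonendo}.
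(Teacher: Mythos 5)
Your proposal follows essentially the same route as the paper: the two-sided dynamic programming principle, the automatic curvature branches coming from concavity in $p^1$ and convexity in $p^2$, the non-revealing deviation by the player whose Hessian branch is assumed strict, the forced non-revelation of the opponent's optimal splitting, and the comparison principle of Theorem \ref{theo:comparison2} for uniqueness. The ``two-sided refinement of Lemma \ref{lem:tech}'' you anticipate is precisely what the paper uses (the convex analogue applied to the $p^2$-slice under $\lambda_{\min}(\bar p^2,D^2_{22}\phi)>0$, combined with the barycenter identity $\sum_b y(\bar p^2)(b)\hat p^2(y,b)=\bar p^2$), so your acknowledged gap is filled exactly as in the proof of Theorem \ref{mainth3}.
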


 Note that the Hamiltonian defined in \eqref{DefH} satisfies conditions \eqref{CondH11} and \eqref{CondH21}. So equation \eqref{HJ2} has at most one viscosity solution.

\subsection{Proof of Theorem \ref{mainthboth}}

The proof is close to the one for Theorem \ref{mainth3}. The main difference is that we have to deal with the fact that both players now have private information, which complicates the proof of the viscosity solution property. On the other hand, the problem is now symmetrical, so that it is enough to show the supersolution property, the argument for the subsolution being identical.

\subsubsection{Step 1: Dynamic programming principle and regularity }

For $(p^1,p^2)\in \Delta(S^1)\times \Delta(S^2)$,  $(x,y)\in (\Delta(A))^{|S^1|}\times (\Delta(B))^{|S^2|}$ and $(a,b)\in A\times B$, we set
$$
\ds
x(p^1)(a)= \sum_{s\in S^1} p^1_s x_s(a)\ {\rm and }\ y(p^2)(b)= \sum_{s\in S^2} p^2_s y_s(b)
$$
and $\displaystyle \hat p^1(x, a):= \left( \frac{p_s^1 x_s(a)}{x(p^1)(a)}\right)_{s\in S^1}$ and $\displaystyle \hat p^2(y, b):
= \left( \frac{p^2_s y_s(b)}{x(p^2)(b)}\right)_{s\in S^2}$
denote the conditional distributions of the states given $a$ and $b$ respectively.

The dynamic programming principle for  $\tilde v_n$ reads
\be\label{PPD4}
\begin{array}{l}
\ds \tilde v_n(p^1,p^2) \\
\; = \; \ds  \max_{x\in (\Delta(A))^{|S^1|}}\min_{y\in (\Delta(B))^{|S^2|}} \left( \lambda_n g(p^1,p^2,x,y)\right.\\
\qquad \ds  \left.+ (1-\lambda_n)\sum_{a\in A, \ b\in B} x(p^1)(a) y(p^2)(b) \tilde v_n(^T\!P^1_{\frac{1}{n}}(a) \hat p^1(x,a), ^T\!P^2_{\frac{1}{n}}(b) \hat p^2(y,b)) \; \right) \\
\; = \; \ds  \min_{y\in (\Delta(B))^{|S^2|}} \max_{x\in (\Delta(A))^{|S^1|}}\left( \lambda_n g(p^1,p^2,x,y)\right.\\
\qquad \ds  \left.+ (1-\lambda_n)\sum_{a\in A, \ b\in B} x(p^1)(a) y(p^2)(b) \tilde v_n(^T\!P^1_{\frac{1}{n}}(a) \hat p^1(x,a), ^T\!P^2_{\frac{1}{n}}(b) \hat p^2(y,b)) \; \right)
\end{array}
\ee

As before, the maps $\tilde v_n$ are uniformly Lipschitz and bounded, and we will prove that any (uniform) accumulation point of the sequence $(\tilde v_n)$   is a viscosity solution of \eqref{HJ2}. Again up to a subsequence, we may assume that $(\tilde v_n)$ converges to some continuous map $w$.

\subsubsection{Step 2: $w$ is a viscosity supersolution}
Let $\phi$ be a smooth test function such that $w-\phi$ has a strict local minimum on $\Delta(S^1)\times\Delta(S^2)$ at some point $(\bar p^1,\bar p^2)\in \Delta(S^1)\times \Delta(S^2)$. From standard arguments, this implies the existence of  a sequence $(p^1_n,p^2_n)$ which converges to $(\bar p^1,\bar p^2)$ and such that $\tilde v_n-\phi$ has a local minimum at $(p^1_n,p^2_n)$ for any $n$: namely,
\be\label{vnphi24}
\tilde v_n(p^1,p^2)\geq \phi(p^1,p^2)-\phi(p^1_n,p^2_n)+\tilde v_n(p^1_n,p^2_n)\qquad\mbox{\rm for any $(p^1,p^2)\in \Delta(S^1)\times \Delta(S^2)$.}
\ee
As $\tilde v_n$ is concave in $p^1$, we must have
$\ds \lambda_{\max}(p^1_n, D^2_{11}\phi(p^1_n,p^2_n))\leq 0$
by definition of $T_{\Delta(S^1)}(p^1_n)$. Since $T_{\Delta(S^1)}(p^1_n)\supset T_{\Delta(S^1)}(\bar p^1)$ for $n$ large enough,
we get,  as $n\to +\infty$:
$$\ds - \lambda_{\max}(\bar p^1, D^2_{11}\phi(\bar p^1,\bar p^2))\geq 0.$$
%Note that \eqref{vnphi24} also implies that (VRAI ET UTILE ????)
%%$D_2\phi(p^1_n,p^2_n)$ belongs to the subdifferential of the convex map $\tilde v_n(p^1_n,\cdot)$ at $p^2_n$: namely
%$\tilde v_n$ has a derivative with respect to the first variable at $(p^1_n,p^2_n)$ with $D_1\phi(p^1_n,p^2_n)= D_1^-\tilde v_n(p^1_n,p^2_n)$.
%%$D_2\phi(p^1_n,p^2_n)\in \partial_2^-\tilde v_n(p^1_n,p^2_n)$.
It remains to check that
$$
\max \left\{ rw(\bar p^1,\bar p^2)+H(\bar p^1,\bar p^2,D\phi(\bar p^1,\bar p^2))\ ;\ - \lambda_{\min}(\bar p^2, D^2_{22}\phi(\bar p^1,\bar p^2))\right\} \geq 0\;.
$$
For this we assume that $\ds \lambda_{\min}(\bar p^2, D^2_{22}\phi(\bar p^1,\bar p^2))>0$ and we are left to prove that
$$
rw(\bar p^1,\bar p^2)+H(\bar p^1,\bar p^2,D\phi(\bar p^1,\bar p^2)) \geq 0\;.
$$
%Since  $\lambda_{\min}(\bar p^2, D^2_{22}\phi(\bar p))>0$, following NOTRE PAPIER (see also CHRISTINE NUMERIQUE) there is $\delta>0$ such that, for all $(p^1,p^2)\in \Delta(S^1)\times \Delta(S^2)$,
%\be\label{vnConv14}
%v_{{n}}(p^1,p^2)\geq \tilde v_n(p^1,p^2_n)+\lg D_{2}\phi(p^1,p^2_{n}), p^2-p^2_{n}\rg +\delta | p^2-p^2_{n}|^2_{T_{\Delta(S^2)}(p^2)}.
%\ee
 Let $y_n\in (\Delta(B))^{|S^2|}$ be optimal in the dynamic programming equation \eqref{PPD4} for $\tilde v_n(p_n)$:
$$
\begin{array}{l}
\ds \tilde v_n(p^1_n,p^2_n) \; = \; \ds   \max_{x\in (\Delta(A))^{|S^1|}}\left( \lambda_n g(p^1_n,p^2_n,x,y_n)\right.\\
\qquad \ds  \left.+ (1-\lambda_n)\sum_{a\in A, \ b\in B} x(p_n^1)(a) y_n(p_n^2)(b) \tilde v_n(^T\!P^1_{\frac{1}{n}}(a) \hat p_n^1(x,a), ^T\!P^2_{\frac{1}{n}}(b) \hat p_n^2(y,b)) \; \right) \;.
\end{array}
$$
Let $y=(y_s)_{s\in S^2}$ be the limit of (a subsequence of) $(y_{n})$.
Fix $x\in \Delta(A)$. (With a slight abuse of notation),  if Player 1 plays the non-revealing strategy $(x,\dots,x)\in (\Delta(A))^{|S^1|}$, we get $x(p^1_n)=x$ and $\hat p^1_n(x,a)=p^1_n$ and therefore
\be\label{point1}
\begin{array}{rl}
\ds \tilde v_n(p^1_n,p^2_n) \; \geq &\ds    \lambda_n g(p^1_n,p^2_n,x,y_n)
\\
&\ds \qquad
+ (1-\lambda_n)\sum_{a\in A, \ b\in B} x(a)y_n(p_n^2)(b) \tilde v_n(^T\!P^1_{\frac{1}{n}}(a) p_n^1, ^T\!P^2_{\frac{1}{n}}(b) \hat p_n^2(y,b)) \;.
\end{array}
\ee
Recalling that $\ds \lambda_n= o(1)$, that $^T\!P^1_{\frac{1}{n}}(a,b) p_n^1= p_n^1+o(1)$ and that $^T\!P^1_{\frac{1}{n}}(a,b) \hat p^2_n(y_n,b)= \hat p^2_n(y_n,b)+o(1)$, we get, letting $n\to+\infty$ in \eqref{point1},
\be\label{jhbjkhzec}
\ds w(\bar p^1,\bar p^2) \; \geq \; \sum_{ b\in B} y(\bar p^2)(b) w(\bar p^1,  \hat p^2(y,b)) \;.
\ee
%Since
%\be\label{jkhbzd}\ds \sum_{ b\in B} y(\bar p^2)(b)  \hat p^2(y,b)= \bar p^2,
%\ee
% all the points $\hat p^2(y,b)$ belong to the face of $\Delta(S^2)$ to which $\bar p^2$ belongs. So $\hat p^2(y,b)-\bar p^2\in T_{\Delta(S^2)}(\bar p^2)$. As $w(\bar p^1, \cdot)-\phi(\bar p^1,\cdot)$ has a strict local minimum on $\Delta(S^2)$ at  $\bar p^2$ and $\ds \lambda_{\min}(\bar p^2, D^2_{22}\phi(\bar p^1,\bar p^2))>0$, Lemma \ref{lem:tech} in appendix states that there is $\delta>0$ such that, for all $p^2\in \Delta(S^2)$ with $p^2-\bar p^2\in T_{\Delta(S^2)}(\bar p^2)$,
%\be\label{vnConv12}
%w(\bar p^1,p^2)\geq w(\bar p^1,\bar p^2)+\lg D_2\phi(\bar p^1,\bar p^2), p^2-\bar p^2\rg +\delta | p^2-\bar p^2|^2.
%\ee
%Plugging \eqref{vnConv12} into \eqref{jhbjkhzec} gives
%$$
%\begin{array}{rl}
%0 \; \geq & \ds  \sum_{ b\in B} y(\bar p^2)(b) \left( w(\bar p^1,  \hat p^2(y,b))-w(\bar p^1,\bar p^2)\right)\\
%\geq & \ds  \sum_{ b\in B} y(\bar p^2)(b) \left( \lg D_2\phi(\bar p^1,\bar p^2), \hat p^2(y,b)-\bar p^2\rg +\delta | \hat p^2(y,b)-\bar p^2|^2\right)\\
%\geq &  \ds \delta \sum_{ b\in B} y(\bar p^2)(b)| \hat p^2(y,b)-\bar p^2|^2
%\end{array}
%$$
%where we use again \eqref{jkhbzd} in the last line.
From \eqref{jhbjkhzec} we conclude as in the proof of Theorem \ref{mainth3} that  $ \hat p^2(y,b)=\bar p^2$ if $y(p^2)(b)>0$.
Coming back to the definition of $y(p^2)(b)>0$, we have therefore that  $y_s=y_{s'}$ for any $s\neq s'$ such that $\bar p_s^2>0$ and $\bar p_{s'}^2>0$: this means that $y$ is non revealing. We denote by $y\in \Delta(B)$ this common value and note that $\hat p^2(y,b)=\bar p$ whenever $y(b)>0$.

With this in mind, we come back to \eqref{point1}, which  becomes, since $\tilde v_n$ is convex in $p^2$, and  since the dynamics of  $(s^1_t)$ is independent of Player~2:
$$
\begin{array}{rl}
\ds \tilde v_n(p^1_n,p^2_n) \; \geq &\ds    \lambda_n g(p^1_n,p^2_n,x,y_n)
\\
&\ds \qquad
+ (1-\lambda_n)\sum_{a\in A} x(a)\tilde v_n\left(^T\!P^1_{\frac{1}{n}}(a) p_n^1, \sum_{ b\in B} y_n(p_n^2)(b) ^T\!P^2_{\frac{1}{n}}(b) \hat p_n^2(y,b)\right) \;.
\end{array}
$$
We next use the fact that $\tilde v_n-\phi$ has a local minimum  at $( p_n^1,p_n^2)$:
\be\label{jhbzdcljn}
\begin{array}{rl}
\ds 0 \; \geq &\ds    \lambda_n \left(g(p^1_n,p^2_n,x,y_n)-\tilde v_n(p^1_n,p^2_n) \right)
\\
&\ds \qquad
+ (1-\lambda_n)\sum_{a\in A} x(a)\left(\phi(^T\!P^1_{\frac{1}{n}}(a) p_n^1, \sum_{ b\in B} y_n(p_n^2)(b) ^T\!P^2_{\frac{1}{n}}(b) \hat p_n^2(y,b))
-\phi(p^1_n,p^2_n) \right)
\end{array}
\ee
where
$$
^T\!P^1_{\frac{1}{n}}(a) p_n^1= p_n^1 +\frac1n\ ^T\!R^1(a) p_n^1+o(\frac{1}{n})
$$
while, as $\ds \sum_{ b\in B} y_n(p_n^2)(b)  \hat p_n^2(y,b)= p_n^2$,
$$
\sum_{ b\in B} y_n(p_n^2)(b) ^T\!P^2_{\frac{1}{n}}(b) \hat p_n^2(y,b)=
p_n^2 + \frac1n\sum_{ b\in B} y_n(p_n^2)(b)\ ^T\!R^2(b) \hat p_n^2(y,b)+o(\frac{1}{n}).
$$
Multiplying \eqref{jhbzdcljn} by $n$ and letting $n\to+\infty$ gives therefore
$$
\begin{array}{rl}
\ds 0 \; \geq &\ds    r \left(g(\bar p^1,\bar p^2,x,y)-w(\bar p^1,\bar p^2) \right)
%\\
%&\ds \qquad
+ \sum_{a\in A} x(a)\big \lg D\phi(\bar p^1,\bar p^2), \left(^T\!R^1(a)\bar p^1, \sum_{ b\in B} y(b)\ ^T\!R^2(b) \bar  p^2\right)\rg\;.
\end{array}
$$
Rearranging we find that
$$
rw(\bar p)+\min_{x\in \Delta(A)}\max_{y\in \Delta(B)}\left( -g(\bar p^1,\bar p^2,x,y) -  \lg D\phi(\bar p),  (^T\!R^1(x) \bar  p^1, ^T\!R^2(y) \bar  p^2)\rg\right)\geq 0\;.
$$
Therefore $w$ is a supersolution.

\appendix

\section{Technical results}
\subsection{Proof of Lemma \ref{lemm2}}
%\begin{proof}[Proof of Lemma \ref{lemm2}]

Let a probability measure $Q$ over  $\Delta(S)^dN$ be as stated. Generic elements of $(\Delta(S)\times S)^\dN$ are denoted $(q_m,s_m)_{m\in \dN}$. To avoid multiplying notations, and at the cost of a notational abuse, we will write $Q(q^m;q_{m+1})$ for the conditional law of $q_{m+1}$ given $q^m$.
Given a probability measure $\tilde \prob$ over $(\Delta(S)\times S)^{\dN}$, we similarly write $\tilde \prob(q^m,s_m;q_{m+1}, s_{m+1})$ for the law of $(q_{m+1},s_{m+1})$ given $(q^m,s_m)$,  $\tilde \prob(q^m,s_m,s_{m+1};q_{m+1})$ for the law of $q_{m+1}$ given $(q^m,s_m,s_{m+1})$, etc., with  semi-colons separating conditioning variables from the others.

For $m\geq 1$, denote by $\theta_m$ the transition function from
$\Delta(S)^m\times S$ to $\Delta(S)\times S$ defined by
\begin{equation}\label{beta}\theta_m(q^m;F,s_{m+1})=\int_F q_{m+1}(s_{m+1}) Q(q^m;dq_{m+1}).\end{equation}
 Intuitively, $\theta_m(q^m;q_{m+1},s_{m+1})$ is the probability obtained  when first choosing $q_{m+1}$ according to its (conditional) law $Q(q^m;q_{m+1})$, then picking $s_{m+1}$ according to $q_{m+1}$, and we define the sequence $\mu=(\mu_m)$ by
\[\mu_m(q^m,s_{m+1};F):=\frac{\theta_m(q^m;F,s_{m+1})}{\theta_m(q^m;s_{m+1})},\]
so that $\mu_m(q^m,s_{m+1};q_{m+1})$ is ``the conditional law of $q_{m+1}$ given $q^m$ and $s_{m+1}$".

\bigskip

We now prove by induction  that the induced distribution $\mu\circ \prob$ over $(\Delta(S)\times S)^\dN$ satisfies \textbf{C1} and \textbf{C2}. We thus assume that for some $m$, (i) the conditional law of $q_m$ given $q^{m-1}$ (under $\mu\circ\prob$) is equal to $Q(q^{m-1};q_m)$, (ii) the conditional law of $s_m$ given $q^{m}$ is equal to $q_m$, and prove that (i) and (ii) also hold for $m+1$.

For $F\subseteq \Delta(S)$, note first that by (ii), one has
\[\mu\circ \prob(q^m;q_{m+1}\in F)=\sum_{s_m\in S} \mu\circ\prob(q^m,s_m;F)q_m(s_m)=
\sum_{s_m,s_{m+1}\in S}   \mu\circ\prob(q^m,s_m;F,s_{m+1})q_m(s_m) .\]
Since $\mu\circ \prob(q^m,s_m;F,s_{m+1})=\mu_m(q^m,s_{m+1};F)\pi(s_{m+1}\mid s_m)$ by definition, one also has
\[
\mu\circ \prob(q^m; q_{m+1}\in F)= \sum_{s_m,s_{m+1}}  \frac{\theta_m(q^m;F,s_{m+1})}{\theta_m(q^m;s_{m+1})}q_m(s_m)\pi(s_{m+1}\mid s_m) .
\]

Observe next that, by the induction assumption,  since $\E[q_{m+1}\mid q^m]=^T\Pi q_m$ and since $s_{m+1}$ and $q^m$ are conditionally independent given $s_m$, one has
\[\theta_m(q^m;s_{m+1})=\sum_{s_m}q_m(s_m)\pi(s_{m+1}\mid s_m).\]
Hence
\begin{equation}\label{point}\mu\circ \prob(q^m;F)=\sum_{s_{m+1}}\theta_m(q^m;s_{m+1},F)=Q(q^m;F),\end{equation}
as desired.

\bigskip

We now prove that $\mu\circ\prob(q^m,q_{m+1};s_{m+1})=q_{m+1}(s_{m+1})$. One has (beware of the semi-colons)
\begin{equation}\label{eq2}
\mu\circ\prob(q^m,q_{m+1};s_{m+1})=\frac{\mu\circ\prob(q^m;q_{m+1},s_{m+1})}{\mu\circ\prob(q^m;q_{m+1})}=\frac{\mu\circ\prob(q^m,s_{m+1};q_{m+1})\mu\circ\prob(q^m;s_{m+1})}{Q(q^m;q_{m+1})}
\end{equation}
using (conditional) Bayes laws and (\ref{point}). Observe next that
\begin{eqnarray*}
\mu\circ\prob(q^m,s_{m+1};q_{m+1})&=&\mu_m(q^m,s_{m+1};q_{m+1}) \\
&=& \frac{\theta_m(q^m;q_{m+1},s_{m+1})}{\theta_m(q^m;s_{m+1})}
\end{eqnarray*}
and
\begin{eqnarray*}
\mu\circ\prob (q^m;s_{m+1})&=& \sum_{s_m}\mu\circ\prob (q^m;s_m,s_{m+1})\\
&=&\sum_{s_m}\mu\circ\prob (q^m;s_m) ,s_{m+1})\times \mu\circ\prob (q^m,s_m;s_{m+1})\\
&=&\sum_{s_m} q_m(s_m)\pi(s_{m+1}\mid s_m) = \E[q_{m+1}(s_{m+1})\mid q^m],
\end{eqnarray*}
where the second equality holds by (\ref{eqalpha}).
Plugging into (\ref{eq2}), this yields
\[\mu\circ\prob(q^m,q_{m+1};s_{m+1})=\frac{\theta_m(q^m;q_{m+1},s_{m+1})}{\theta_m(q^m;s_{m+1})}
\times \frac{\E[q_{m+1}(s_{m+1})\mid q^m]}{\theta_m(q^m;q_{m+1})}.\]
To conclude, recall that (see (\ref{beta}))
\[\theta_m(q^m;q_{m+1},s_{m+1})= \theta_m(q^m;q_{m+1})q_{m+1}(s_{m+1}),\]
while
\begin{eqnarray*}
\theta_m(q_m;s_{m+1}) &=& \int_{\Delta(S)}q_{m+1}(s_{m+1})Q(q^m;dq_{m+1}) = \E[q_{m+1}(s_{m+1})\mid q^m],
\end{eqnarray*}
so that $\mu\circ \prob(q^m,q_{m+1};s_{m+1})=q_{m+1}(s_{m+1})$, as desired.
\subsection{Proof of the comparison principle}

In this section we prove Theorem \ref{theo:comparison2} (which implies  Theorem \ref{theo:comparison}). We follow here the proof of Crandall, Ishii and Lions (1992)  for second order Hamilton-Jacobi equations. However their results do not apply directly, because  the terms $\lambda_{\min}$ and $\lambda_{\max}$ introduce a strong degeneracy in the equation. This issue is also present in Cardaliaguet and Rainer (2009a), where it is dealt with by an induction argument over the dimension of the faces of the  simplices, which relies on the fact that the restriction of solutions to  faces are still solutions. This is no longer the case here. This forces us to revisit the proof, and  to come back to the basic technique consisting in regularizing the solutions by inf- or sup convolution, and then in using Jensen Lemma. \\

Let  $w_1$ be a subsolution and $w_2$ be a supersolution of \eqref{HJ2}. Our aim is to show that $w_1\leq w_2$. We argue by contradiction, and assume that
\be\label{Contradiction}
M:=\sup_{p\in \Delta(S^1)\times\Delta(S^2)} \left\{ w_1(p)-w_2(p)\right\}\; >\; 0\;.
\ee
In order to use the special structure of the problem, we have to regularize the maps $w_1$ and $w_2$ by sup and inf-convolution respectively. This technique is standard and we refer to Crandall, Ishii and Lions (1992) for details. For $\delta>0$ and $p\in \dR^{|S^1|+|S^2|}$ we set
$$
w_1^\delta(p)= \max_{p'\in \Delta(S^1)\times\Delta(S^2)} \left\{ w_1(p')-\frac{1}{2\delta}|p-p'|^2\right\}
$$
and
$$
w_{2,\delta}(p)= \min_{p'\in \Delta(S^1)\times\Delta(S^2)} \left\{ w_2(p')+\frac{1}{2\delta}|p-p'|^2\right\}
$$
We note for later use that $w_1^\delta$ and $w_{2,\delta}$ are now defined over the entire space $\dR^{|S^1|+|S^2|}$, that $w_1^\delta$ is semiconvex while $w_{2,\delta}$ is semiconcave (see Crandall et al (1992)). Moreover,
\be\label{growthw1w2}
\lim_{|p|\to+\infty} |p|^{-1} w_1^\delta(p)=-\infty, \; \lim_{|p|\to+\infty}|p|^{-1} w_{2,\delta}(p)=+\infty\;.
\ee
Setting
\be\label{Mdelta}
M_\delta= \sup_{p\in \dR^{|S^1|+|S^2|}}  \left\{ w_1^\delta(p)-w_{2,\delta}(p)\right\}
\ee
we have:
\begin{lemma}\label{lem:limpdelta} For any $\delta>0$, the problem \eqref{Mdelta} has at least one maximum point. If $p_\delta$ is such a maximum point and if $p'_\delta\in  \Delta(S^1)\times\Delta(S^2)$ and $p''_\delta\in  \Delta(S^1)\times\Delta(S^2)$ are such that
\be\label{pprimepseconde}
(i)\; w_1^\delta(p_\delta)= w_1(p'_\delta)-\frac{1}{2\delta}|p_\delta-p'_\delta|^2\qquad {\rm and}\qquad
(ii)\; w_{2,\delta}(p_\delta)= w_2(p''_\delta)+\frac{1}{2\delta}|p_\delta-p''_\delta|^2
\ee
then, as $\delta \to 0$, $M_\delta \to M$ while
$$
\frac{|p_\delta-p'_\delta|^2}{2\delta}+ \frac{|p_\delta-p''_\delta|^2}{2\delta} \to 0.
$$
\end{lemma}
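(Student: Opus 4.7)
The plan is to follow the classical sup/inf-convolution argument of viscosity-solution theory (\`a la Crandall, Ishii and Lions (1992)). Existence of a maximizer $p_\delta$ for \eqref{Mdelta} is immediate: since $w_1,w_2$ are continuous on the compact set $\Delta(S^1)\times\Delta(S^2)$, the maps $w_1^\delta$ and $w_{2,\delta}$ are continuous on all of $\dR^{|S^1|+|S^2|}$, and the growth property \eqref{growthw1w2} forces $w_1^\delta(p)-w_{2,\delta}(p)\to -\infty$ as $|p|\to\infty$, so the supremum is attained. By continuity of $w_1,w_2$ together with compactness of $\Delta(S^1)\times \Delta(S^2)$, points $p'_\delta,p''_\delta$ satisfying (i)--(ii) also exist in $\Delta(S^1)\times\Delta(S^2)$.

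The core of the argument is the identity obtained by substituting (i)--(ii) into the definition of $M_\delta$:
\begin{equation}\label{pen_id}
M_\delta \; = \; w_1(p'_\delta) - w_2(p''_\delta) - \frac{|p_\delta-p'_\delta|^2}{2\delta} - \frac{|p_\delta-p''_\delta|^2}{2\delta}.
\end{equation}
The elementary lower bound $M_\delta\geq M$ holds because, for every $p\in \Delta(S^1)\times \Delta(S^2)$, the admissible choice $p'=p''=p$ in the definitions of $w_1^\delta,w_{2,\delta}$ yields $w_1^\delta(p)\geq w_1(p)$ and $w_{2,\delta}(p)\leq w_2(p)$, hence $M_\delta\geq \sup_{\Delta(S^1)\times \Delta(S^2)}(w_1-w_2)=M$. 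Rearranging \eqref{pen_id} and using the boundedness of $w_1,w_2$ on $\Delta(S^1)\times\Delta(S^2)$ together with $M_\delta\geq M$ gives
\[
0 \; \leq \; \frac{|p_\delta-p'_\delta|^2}{2\delta} + \frac{|p_\delta-p''_\delta|^2}{2\delta} \; = \; w_1(p'_\delta) - w_2(p''_\delta) - M_\delta \; \leq \; C
\]
for some constant $C$ independent of $\delta$. In particular $|p_\delta-p'_\delta|, |p_\delta-p''_\delta|=O(\sqrt{\delta})\to 0$.

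To conclude, compactness of $\Delta(S^1)\times \Delta(S^2)$ allows me to extract a subsequence along which both $p'_\delta$ and $p''_\delta$ converge to the same limit $p_*\in \Delta(S^1)\times \Delta(S^2)$; they share the limit because $|p'_\delta-p''_\delta|\to 0$. Passing to the $\limsup$ in \eqref{pen_id} and using continuity of $w_1,w_2$ together with the non-negativity of the penalty terms yields
\[
\limsup_{\delta\to 0} M_\delta \; \leq \; w_1(p_*)-w_2(p_*) \; \leq \; M,
\]
which combined with $M_\delta\geq M$ gives $M_\delta\to M$ and shows that $p_*$ is a maximizer of $w_1-w_2$. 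Plugging $M_\delta\to M$ and $w_1(p'_\delta)-w_2(p''_\delta)\to M$ back into \eqref{pen_id} delivers
\[
\frac{|p_\delta-p'_\delta|^2}{2\delta} + \frac{|p_\delta-p''_\delta|^2}{2\delta} \; \longrightarrow \; 0,
\]
which is the claimed convergence; it holds along the full parameter $\delta\to 0$ since the limit is unique.

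The argument contains no serious obstacle: it is a textbook sup/inf-convolution estimate. The only mild care needed is to verify that the maximizers/minimizers arising in the convolutions lie in the compact simplex $\Delta(S^1)\times\Delta(S^2)$, which is immediate from continuity of $w_1,w_2$ and compactness of their domain; everything else is bookkeeping around the single identity \eqref{pen_id}.
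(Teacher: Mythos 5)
Your proof is correct and fills in precisely the ``classical'' sup/inf-convolution estimate that the paper invokes without writing out (its proof of this lemma is one line: existence follows from \eqref{growthw1w2}, the rest is classical). The key identity $M_\delta = w_1(p'_\delta)-w_2(p''_\delta)-\frac{1}{2\delta}|p_\delta-p'_\delta|^2-\frac{1}{2\delta}|p_\delta-p''_\delta|^2$ combined with $M_\delta\geq M$ and the compactness/subsequence argument is exactly the intended route.
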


\begin{proof} The existence of a maximum point is a straightforward consequence of \eqref{growthw1w2}.  The rest of the statement is classical.
\end{proof}

Next we note that $w_1^\delta$ and $w_{2,\delta}$ are still respectively a subsolution and a supersolution of slightly modified equations:
\begin{lemma}\label{lem:ineqw1w2} Assume that $v_1^\delta$ has a second order Taylor expansion at a point $p$. Then
\be\label{ineqw1}
 \min\left\{ rw_1^\delta(p)+H(p',Dw_1^\delta(p))\ ;\  - \lambda_{\max}((p')^1, D^2_{11}w_1^\delta(p))\right\} \leq 0,
 \ee
 where $p'=((p')^1,(p')^2)\in \Delta(S^1)\times\Delta(S^2)$ is such that
 $$
 w_1^\delta(p)= w_1(p')-\frac{1}{2\delta}|p-p'|^2.
 $$
Similarly, if $w_{2,\delta}$ has a second order  Taylor expansion at a point $p$, then
\be\label{ineqw2}
\max \left\{ rw_{2,\delta}(p)+H(p'',Dw_{2,\delta}(p))\ ;\ - \lambda_{\min}((p'')^2, D^2_{22}w_{2,\delta}(p)\right\}  \geq 0,
 \ee
 where $p''=((p'')^1,(p'')^2)\in \Delta(S^1)\times\Delta(S^2)$ is such that $$
 w_{2,\delta}(p)= w_2(p'')+\frac{1}{2\delta}|p-p''|^2.
 $$
\end{lemma}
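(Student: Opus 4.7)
I will prove \eqref{ineqw1} in detail; inequality \eqref{ineqw2} follows from a symmetric argument using the inf-convolution and the supersolution property of $w_2$. The approach is the classical sup-convolution trick for viscosity subsolutions, adapted to our non-standard equation where the test condition involves $\lambda_{\max}$ on the tangent space to $\Delta(S^1)$. Given the Taylor expansion of $w_1^\delta$ at $p$, I will construct a smooth (in fact quadratic) test function for $w_1$ at the point $p'$ from which the sup-convolution inherits, apply the viscosity subsolution property of $w_1$ at $p'$, and then translate the resulting inequality back to the $w_1^\delta$ side.

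\textbf{Test function and subsolution inequality.} Let $\phi$ denote the second-order Taylor polynomial of $w_1^\delta$ at $p$, so $w_1^\delta(q) = \phi(q) + o(|q-p|^2)$. Define
\[
\psi(q') := \phi\bigl(q' + p - p'\bigr) + \tfrac{1}{2\delta}|p - p'|^2, \qquad q' \in \dR^{|S^1|+|S^2|}.
\]
This is a quadratic polynomial with $\psi(p') = \phi(p) + \tfrac{1}{2\delta}|p-p'|^2 = w_1(p')$, $D\psi(p') = Dw_1^\delta(p)$, and $D^2\psi(p') = D^2 w_1^\delta(p)$. Combining the pointwise bound $w_1(q') \leq w_1^\delta(q'+p-p') + \tfrac{1}{2\delta}|p-p'|^2$ (which is merely the definition of the sup-convolution evaluated at base point $q'+p-p'$) with the Taylor remainder estimate for $w_1^\delta$ yields $w_1(q') - \psi(q') \leq o(|q'-p'|^2)$, with equality at $q'=p'$. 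The standard perturbation $\psi_\eta := \psi + \eta|\cdot - p'|^2$ with $\eta > 0$ turns $p'$ into a strict local maximum of $w_1 - \psi_\eta$ on $\Delta(S^1)\times \Delta(S^2)$. Applying the viscosity subsolution definition at $p'$, extracting the inner $\min\{\cdot;\cdot\}$ from the outer $\max$, and sending $\eta \to 0$ using continuity of the map $X \mapsto \lambda_{\max}((p')^1, X)$ gives
\[
\min\Bigl\{\, rw_1(p') + H(p', Dw_1^\delta(p)) \; ; \; -\lambda_{\max}\bigl((p')^1, D^2_{11}w_1^\delta(p)\bigr)\,\Bigr\} \leq 0.
\]
Since $w_1(p') = w_1^\delta(p) + \tfrac{1}{2\delta}|p-p'|^2 \geq w_1^\delta(p)$ and $r>0$, replacing $rw_1(p')$ by the smaller $rw_1^\delta(p)$ can only decrease the first argument of the $\min$; this yields \eqref{ineqw1}.

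\textbf{Inf-convolution case and main subtlety.} For \eqref{ineqw2} one runs the mirror construction: if $\phi$ is the Taylor polynomial of $w_{2,\delta}$ at $p$, set $\psi(q') := \phi(q'+p-p'') - \tfrac{1}{2\delta}|p-p''|^2$; this touches $w_2$ from below at $p''$ up to $o(|q'-p''|^2)$, and a downward perturbation $\psi_\eta := \psi - \eta|\cdot - p''|^2$ makes $p''$ a strict local minimum of $w_2 - \psi_\eta$. The supersolution definition then yields, after sending $\eta \to 0$ and using $w_2(p'') = w_{2,\delta}(p) - \tfrac{1}{2\delta}|p-p''|^2 \leq w_{2,\delta}(p)$ to upgrade $rw_2(p'')$ to $rw_{2,\delta}(p)$ inside the $\max$, the desired inequality \eqref{ineqw2}. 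The only genuine subtlety is that the outer $\max/\min$ in the viscosity conditions couples the $\lambda_{\max}$ and $\lambda_{\min}$ constraints: the sup-convolution argument preserves the inner $\min$ (and the subsolution's $\lambda_{\max}$ constraint), while the inf-convolution preserves the inner $\max$ (and the supersolution's $\lambda_{\min}$ constraint); this is precisely why the statements \eqref{ineqw1}--\eqref{ineqw2} are weaker than one would write for a classical scalar Hamilton-Jacobi equation, and it is also why the comparison principle that uses this lemma must then be combined with dimension-dependent Alexandrov-type arguments to recover information about the discarded outer clauses.
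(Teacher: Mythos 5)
Your proof is correct and follows essentially the same route as the paper's: translate the second‑order Taylor polynomial of $w_1^\delta$ at $p$ to a test function touching $w_1$ from above at $p'$ via the sup‑convolution inequality, add a small quadratic perturbation to get a strict local maximum, apply the subsolution property (extracting the inner $\min$ from the outer $\max$), and pass to the limit in the perturbation while using $w_1(p')\geq w_1^\delta(p)$. The only differences from the paper's argument are cosmetic (the paper builds the perturbation $\tfrac{\gamma}{2}|p-\bar p|^2$ into the test function from the start and obtains the translated touching inequality via the substitution $p=p'-\bar p'+\bar p$).
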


\begin{proof} We do the proof for $w_1^\delta$, the argument for $w_{2,\delta}$ being symmetrical.
Assume that  $w_1^\delta$ has a second order  Taylor expansion at a point $\bar p$ and set, for $\gamma>0$ small,
$$
\phi_\gamma(p)= \lg  Dw_1^\delta(\bar p), p-\bar p\rg +\frac12\lg D^2w_1^\delta(\bar p)(p-\bar p),p-\bar p\rg + \frac{\gamma}{2}|p-\bar p|^2.
$$
We also denote by $\bar p'$ a point in $\Delta(S^1)\times \Delta(S^2)$ such that
\be\label{lienbarpbarpprime}
 w_1^\delta(\bar p)= w_1(\bar p')-\frac{1}{2\delta}|\bar p-\bar p'|^2.
 \ee
Then $w_1^\delta-\phi_\gamma$ has a maximum at $\bar p$, which implies, by definition of $w_1^\delta$, that
$$
w_1(p') -\frac{1}{2\delta}|p'-p|^2 \leq \phi_\gamma(p)-\phi_\gamma(\bar p)+ w_1^\delta(\bar p)\qquad \forall p\in \dR^{|S^1|\times|S^2|}, \ p'\in \Delta(S^1)\times \Delta(S^2),
$$
with an equality for $(p,p')=(\bar p,\bar p')$. If we choose $p=p'-\bar p'+\bar p$ in the above formula, we get:
$$
w_1(p') \leq \phi_\gamma(p'-\bar p'+\bar p)+\frac{1}{2\delta}|\bar p'-\bar p|^2 -\phi_\gamma(\bar p)+ w_1^\delta(\bar p)\qquad \forall p'\in \Delta(S^1)\times \Delta(S^2),
$$
with an equality at $p'=\bar p'$. As $w_1$ is a subsolution, we obtain therefore, using the right-hand side of the above inequality as a test function,
$$
 \min\left\{ rw_1(\bar p')+H(\bar p',D\phi_\gamma(\bar p))\ ;\  - \lambda_{\max}((\bar p')^{1}, D^2_{11}\phi_\gamma(\bar p))\right\} \leq 0.
$$
We note that $D\phi_\gamma(\bar p)= Dw_1^\delta(\bar p)$, $D^2_{11}\phi_\gamma(\bar p)= D^2w_1^\delta(\bar p)+\gamma I$ and
$w_1(\bar p')\geq w_1^\delta(\bar p)$ (by \eqref{lienbarpbarpprime}). So letting $\gamma\to 0$ we obtain the desired result.
\end{proof}

In order to exploit inequalities \eqref{ineqw1} and \eqref{ineqw2}, we have to produce points at which $w_1^\delta$ is strictly concave with respect to the first variable while $w_2^\delta$ is strictly convex with respect to the second one. For this reason, we introduce a new penalization. For $\sigma>0$ and $p=(p^1,p^2)\in \dR^{|S^1|+|S^2|}$, let us set $\bar \xi_1(p^1)= (1+|p^1|^2)^\frac12$, $\bar \xi_2(p^2)= (1+|p^2|^2)^\frac12$
and
$$
M_{\delta,\sigma} = \sup_{p\in \dR^{|S^1|+|S^2|}}  \left\{ w_1^\delta(p)-w_{2,\delta}(p)+ \sigma \bar \xi_1( p^1) +\sigma \bar \xi_2( p^2)\right\}
$$
Using \eqref{growthw1w2}, one easily checks that there exists a maximizer $(\hat p^1,\hat p^2)$ to the above problem. In order to use Jensen's Lemma (Lemma A.3. in  Crandall, Ishii and Lions  \cite{CIL}), we need  this maximum to be strict. For this we modify slightly $\bar \xi_1$ and $\bar \xi_2$: we set, for $i=1,2$, $\xi_i(p^i)= \bar \xi_i(p^i)- \sigma(1+ |p^i-\hat p^i|^2)^{\frac12}$. We will choose $\sigma>0$ so small that $\xi_1$ and $\xi_2$ still have a positive second order derivative. By definition,
$$
M_{\delta,\sigma} = \sup_{p\in \dR^{|S^1|+|S^2|}}  \left\{ w_1^\delta(p)-w_{2,\delta}(p)+ \sigma \xi_1( p^1) +\sigma \xi_2( p^2)\right\}
$$
and the above problem has a strict maximum at $(\hat p^1,\hat p^2)$.
As the map $p\to w_1^\delta(p)-w_{2,\delta}(p)+ \sigma \xi_1( p^1) +\sigma \xi_2( p^2)$ is semiconcave, Jensen's Lemma
states that, for any $\ep>0$, there is vector $a_\ep\in \dR^{|S^1|+|S^2|}$ with $|a_\ep|\leq \ep$, such that the problem
$$
M_{\delta,\sigma,\ep}:=\sup_{p\in \dR^{|S^1|+|S^2|}}  \left\{ w_1^\delta(p)-w_{2,\delta}(p)+\sigma \xi_1( p^1) +\sigma \xi_2( p^2)+ \lg a_\ep,p\rg \right\}
$$
has a maximum point $p_{\delta,\sigma,\ep}\in \dR^{|S^1|+|S^2|}$ at which the maps $w_1^\delta$ and $w_{2,\delta}$ have a second order  Taylor expansion.
 From Lemma \ref{lem:ineqw1w2}, we have
\be\label{ineqw1delta}
 \min\left\{ rw_1^\delta(p_{\delta,\sigma,\ep})+H(p_{\delta,\sigma,\ep}',Dw_1^\delta(p_{\delta,\sigma,\ep}))\ ;\  - \lambda_{\max}((p_{\delta,\sigma,\ep}')^1, D^2_{11}w_1^\delta(p_{\delta,\sigma,\ep}))\right\} \leq 0,
\ee
and
\be\label{ineqw2delta}
\max \left\{ rw_{2,\delta}(p_{\delta,\sigma,\ep})+H(p_{\delta,\sigma,\ep}'',Dw_{2,\delta}(p_{\delta,\sigma,\ep}))\ ;\ - \lambda_{\min}((p_{\delta,\sigma,\ep}'')^2, D^2_{22}w_{2,\delta}(p_{\delta,\sigma,\ep})\right\}  \geq 0,
\ee
where $p'_{\delta,\sigma,\ep}$ and $p''_{\delta,\sigma,\ep}$ are points in $\Delta(S^1)\times \Delta(S^2)$ at which one has
 $$
 w_1^\delta(p_{\delta,\sigma,\ep})= w_1(p'_{\delta,\sigma,\ep})-\frac{1}{2\delta}|p_{\delta,\sigma,\ep}-p_{\delta,\sigma,\ep}'|^2\; {\rm and }\;
 w_{2,\delta}(p_{\delta,\sigma,\ep})= w_2(p''_{\delta,\sigma,\ep})+\frac{1}{2\delta}|p_{\delta,\sigma,\ep}-p''_{\delta,\sigma,\ep}|^2 .
 $$
Note for later use that
\be\label{Dw1=Dw2=}
Dw_1^\delta(p_{\delta,\sigma,\ep})=-\frac{1}{\delta}\left(p_{\delta,\sigma,\ep}-p_{\delta,\sigma,\ep}'\right)\;{\rm and} \;
D w_{2,\delta}(p_{\delta,\sigma,\ep})= \frac{1}{\delta}\left(p_{\delta,\sigma,\ep}-p_{\delta,\sigma,\ep}''\right).
\ee
By definition of $M_{\delta,\sigma,\ep}$ we have
$$
w_1^\delta(p)\leq M_{\delta,\sigma,\ep}+ w_{2,\delta}(p) -\sigma (\xi_1( p^1)+\xi_2( p^2)) \lg a_\ep,p\rg \qquad \forall p\in \dR^{|S^1|+|S^2|},
$$
with an equality at $p_{\delta,\sigma,\ep}$. Hence
\be\label{eqDw1Dw2}
Dw_1^\delta(p_{\delta,\sigma,\ep})
= Dw_{2,\delta}(p_{\delta,\sigma,\ep}) -\sigma  \left(\begin{array}{c}D_1\xi_1(p^1_{\delta,\sigma,\ep})\\D_2\xi_2(p^2_{\delta,\sigma,\ep})\end{array} \right)- a_\ep
\ee
while
\be\label{ineqD2w1vsD2w2}
D^2w_1^\delta(p_{\delta,\sigma,\ep})
\leq D^2w_{2,\delta}(p_{\delta,\sigma,\ep}) -\sigma \left(\begin{array}{cc}D^2_{1,1}\xi_1(p^1_{\delta,\sigma,\ep})&0\\0&D^2_{2,2}\xi_2(p^2_{\delta,\sigma,\ep})\end{array} \right).
\ee
We now check that $\ds \lambda_{\max}((p_{\delta,\sigma,\ep}')^1,D^2_{11}w_1^\delta(p_{\delta,\sigma,\ep}))<0$. For this, we come back to the definition of $w_{2,\delta}$ and note that, for any $p^1\in \dR^{|S^1|}$ and $(p')^1\in \Delta(S^1)$,
$$
w_{2,\delta}(p^1,p_{\delta,\sigma,\ep}^2)\leq w_2((p')^1,(p_{\delta,\sigma,\ep}')^2)+\frac{1}{2\delta}\left(|p^1-(p')^1|^2+
|(p_{\delta,\sigma,\ep})^2-(p_{\delta,\sigma,\ep}')^2|^2\right),
$$
with an equality at $(p^1, (p')^1)= ((p_{\delta,\sigma,\ep})^1, (p_{\delta,\sigma,\ep}')^1)$. If $z\in T_{\Delta(S^2)}(p_{\delta,\sigma,\ep}')^1$ with $|z|$ small enough, taking $p^1:= (p_{\delta,\sigma,\ep})^1+ z$ and $(p')^1= (p_{\delta,\sigma,\ep}')^1+ z$ gives
\begin{multline*}
w_{2,\delta}((p_{\delta,\sigma,\ep})^1+ z,p_{\delta,\sigma,\ep}^2)\leq \\ w_2((p_{\delta,\sigma,\ep}')^1+ z,(p_{\delta,\sigma,\ep}')^2)+\frac{1}{2\delta}\left(|(p_{\delta,\sigma,\ep})^1-(p_{\delta,\sigma,\ep}')^1|^2+
|(p_{\delta,\sigma,\ep})^2-(p_{\delta,\sigma,\ep}')^2|^2\right),
\end{multline*}
with equality for $z=0$. As $w_2$ is concave with respect to the first variable, the above inequality implies that  $\ds \lambda_{\max}((p_{\delta,\sigma,\ep}')^1,D^2_{11}w_{2,\delta}(p_{\delta,\sigma,\ep}))\leq 0$. In view of \eqref{ineqD2w1vsD2w2} we get therefore
$$
\ds \lambda_{\max}((p_{\delta,\sigma,\ep}')^1,D^2_{11}w_1^\delta(p_{\delta,\sigma,\ep})) \leq
-\sigma \lambda_{\min}((p_{\delta,\sigma,\ep}')^1,D^2_{1,1}\xi_1(p^1_{\delta,\sigma,\ep}))<0
$$
because $D^2_{1,1}\xi_1>0$ by contruction.
One can check in the same way that $$ \lambda_{\min}((p_{\delta,\sigma,\ep}'')^2, D^2_{22}w_{2,\delta}(p_{\delta,\sigma,\ep})>0.$$ So \eqref{ineqw1delta} and \eqref{ineqw2delta} become
\be\label{ineqw1deltaNew}
rw_1^\delta(p_{\delta,\sigma,\ep})+H(p_{\delta,\sigma,\ep}',Dw_1^\delta(p_{\delta,\sigma,\ep})) \leq 0
\ee
and
\be\label{ineqw2deltaNew}
 rw_{2,\delta}(p_{\delta,\sigma,\ep})+H(p_{\delta,\sigma,\ep}'',Dw_{2,\delta}(p_{\delta,\sigma,\ep}))  \geq 0.
\ee
We compute the difference of the two inequalities:
$$
r(w_1^\delta(p_{\delta,\sigma,\ep})-w_{2,\delta}(p_{\delta,\sigma,\ep}))
+H(p_{\delta,\sigma,\ep}',Dw_1^\delta(p_{\delta,\sigma,\ep})) -H(p_{\delta,\sigma,\ep}'',Dw_{2,\delta}(p_{\delta,\sigma,\ep})) \leq 0,
$$
where, in view of assumption \eqref{CondH21} and \eqref{Dw1=Dw2=},
$$
H(p_{\delta,\sigma,\ep}',Dw_1^\delta(p_{\delta,\sigma,\ep}))
\geq
H(p_{\delta,\sigma,\ep},Dw_1^\delta(p_{\delta,\sigma,\ep})) -\frac{C}{\delta}\left|p_{\delta,\sigma,\ep}-p_{\delta,\sigma,\ep}'\right|^2
$$
while
$$
H(p_{\delta,\sigma,\ep}'',Dw_{2,\delta}(p_{\delta,\sigma,\ep}))\leq
H(p_{\delta,\sigma,\ep},Dw_{2,\delta}(p_{\delta,\sigma,\ep}))+ \frac{C}{\delta}\left|p_{\delta,\sigma,\ep}-p_{\delta,\sigma,\ep}''\right|^2.
$$
So
$$
\begin{array}{l}
\ds r(w_1^\delta(p_{\delta,\sigma,\ep})-w_{2,\delta}(p_{\delta,\sigma,\ep}))
+H(p_{\delta,\sigma,\ep},Dw_1^\delta(p_{\delta,\sigma,\ep})) -H(p_{\delta,\sigma,\ep},Dw_{2,\delta}(p_{\delta,\sigma,\ep})) \\
\qquad \qquad \ds \leq
 \frac{C}{\delta}\left(\left|p_{\delta,\sigma,\ep}-p_{\delta,\sigma,\ep}'\right|^2+\left|p_{\delta,\sigma,\ep}-p_{\delta,\sigma,\ep}''\right|^2\right).
\end{array}$$
We now use assumption \eqref{CondH11} on $H$ combined with \eqref{eqDw1Dw2} to deduce:
\be\label{ihbdczehjb}
\ds r(w_1^\delta(p_{\delta,\sigma,\ep})-w_{2,\delta}(p_{\delta,\sigma,\ep}))
\leq
 \frac{C}{\delta}\left(\left|p_{\delta,\sigma,\ep}-p_{\delta,\sigma,\ep}'\right|^2+\left|p_{\delta,\sigma,\ep}-p_{\delta,\sigma,\ep}''\right|^2\right)+C(\ep+\sigma),
\ee
since $D\xi_1$ and $D\xi_2$ are bounded. As $\sigma$ and $\ep$ tend to $0$, the $p_{\delta,\sigma,\ep}$, $p_{\delta,\sigma,\ep}'$ and $p_{\delta,\sigma,\ep}''$ converges (up to a subsequence) to  $p_\delta$, $p_\delta'$ and $p_\delta''$, where $p_\delta$ is a maximum in \eqref{Mdelta} and where $p_\delta'$ and $p_\delta''$ satisfy \eqref{pprimepseconde}. Moreover \eqref{ihbdczehjb} implies that
$$
\ds rM_\delta= r(w_1^\delta(p_\delta)-w_{2,\delta}(p_\delta))
\leq
 \frac{C}{\delta}\left(\left|p_\delta-p_\delta'\right|^2+\left|p_\delta-p_\delta''\right|^2\right).
$$
We finally let $\delta\to 0$: in view of Lemma \ref{lem:limpdelta} the above inequality yields to $M=\lim_{\delta\to0}M_\delta\leq 0$, which contradicts our initial assumption. Therefore $w_1\leq w_2$ and the proof is complete.\\

\noindent{\bf Acknowledgments.} We thank Fabien Gensbittel for fruitful discussions.\\

\noindent This work has been partially supported by the French National Research Agency
 ANR-10-BLAN 0112.

\end{document}